\documentclass[acmsmall,review=false]{acmart}
%
%

\usepackage[utf8]{inputenc}
\usepackage[T1]{fontenc}
\usepackage[british]{babel}
\usepackage{xspace}
\usepackage[textsize=small]{todonotes}
\usepackage{graphicx,csquotes}
\usepackage{amsmath,amsthm,mathrsfs,mathtools,bm,dsfont,xfrac}
\usepackage{bbm}
\usepackage[all,2cell,cmtip]{xy}
\usepackage{caption}
\usepackage{aliascnt}
\usepackage{microtype}
\usepackage{cleveref}

%
%
\usepackage[footnote,marginclue,nomargin,draft]{fixme}


\hypersetup{unicode}

%
\usepackage[inline]{enumitem}
\setlist[enumerate,1]{label=(\arabic*),font=\normalfont,align=left,leftmargin=0pt,labelindent=0pt,listparindent=\parindent,labelwidth=0pt,itemindent=!,topsep=3pt,parsep=0pt,itemsep=3pt,start=1}
\setlist[enumerate,2]{label=(\arabic{enumi}\alph*),font=\normalfont,align=left,leftmargin=0pt,labelindent=0pt,listparindent=\parindent,labelwidth=0pt,itemindent=!,topsep=3pt,parsep=0pt,itemsep=3pt,start=1}
\setlist[enumerate,3]{label=(\roman*),font=\normalfont,align=left,leftmargin=0pt,labelindent=0pt,listparindent=\parindent,labelwidth=0pt,itemindent=!,topsep=3pt,parsep=0pt,itemsep=3pt,start=1}

\setlist[itemize]{labelindent=*,leftmargin=*}

%
%
%
%
%


\AtEndPreamble{
\theoremstyle{plain}

\newtheorem{thm}[theorem]{Theorem}
\newtheorem{cor}[theorem]{Corollary}
\newtheorem{lem}[theorem]{Lemma}
\newtheorem{prop}[theorem]{Proposition}

%
%
%
\theoremstyle{definition}
\newtheorem{defn}[theorem]{Definition}
\newtheorem{expl}[theorem]{Example}

%
%
\newtheorem{notation}[theorem]{Notation}
%
%
\newtheorem{construction}[theorem]{Construction}
%
\newtheorem{remark}[theorem]{Remark}
%
\newtheorem{assumption}[theorem]{Assumption}
}


\newdir{ >}{{}*!/-5pt/@{>}}
\newdir{ (}{{}*!/-10pt/\dir^{(}}
\newdir{ )}{{}*!/-10pt/\dir_{(}}

\numberwithin{equation}{section}



\newcommand{\Stone}{{\mathbf{Stone}}}

\newcommand{\Priest}{\mathbf{Priest}}

\newcommand{\Set}{\mathbf{Set}}
\newcommand{\Pos}{\mathbf{Pos}}

\newcommand{\Alg}{\mathsf{Alg}}
\newcommand{\A}{\mathscr{A}}
\newcommand{\B}{\mathscr{B}}
\newcommand{\Cat}{\mathscr{C}}
\newcommand{\D}{\mathscr{D}}
\newcommand{\Df}{\D_{\mathsf{f}}}

\newcommand{\E}{\mathcal{E}}
\newcommand{\M}{\mathcal{M}}
\newcommand{\V}{\mathcal{V}}
\renewcommand{\L}{\mathscr{L}}
\newcommand{\MT}{\mathbf{T}}

\newcommand{\Pro}[1]{\mathop{\mathsf{Pro}}#1}
\newcommand{\Ind}[1]{\mathop{\mathsf{Ind}}#1}

\newcommand{\hatD}{\widehat\D}
\newcommand{\hatT}{{\widehat\MT}}
\newcommand{\hatt}{\widehat{T}}

\newcommand{\rel}{\mathsf{rel}}

\newcommand{\ol}{\overline}
\newcommand{\hatE}{\widehat{\E}}
\newcommand{\hatM}{\widehat{\M}}

\newcommand{\dash}{\mathord{-}}

\newcommand{\hatX}{{\widehat{X}}}
\newcommand{\hateta}{\widehat\eta}
\newcommand{\hatmu}{\widehat\mu}
\renewcommand{\epsilon}{\varepsilon}
\newcommand{\id}{\mathsf{id}}
\newcommand{\Id}{\mathsf{Id}}
\newcommand{\seq}{\subseteq}
\newcommand{\xto}{\xrightarrow}

\newcommand{\defeq}{\coloneqq}
\newcommand{\Ran}{\mathsf{Ran}}

\newcommand{\op}{\mathrm{op}}
\newcommand{\set}[2]{\{\, #1 \mid #2 \,\}}
\newcommand{\<}{\langle}
\renewcommand{\>}{\rangle}

\renewcommand{\o}{\cdot}

\newcommand{\takeout}[1]{\empty}

\renewcommand{\phi}{\varphi}

\newcommand{\mc}{\mathcal}

\renewcommand{\S}{\mc{S}}

\newcommand{\BA}{\mathbf{BA}}

\newcommand{\KCat}{\mathscr{K}}

\newcommand{\limit}{\mathop{\mathsf{lim}}}
\newcommand{\colim}{\mathop{\mathsf{colim}}}

\newcommand{\subto}{\hookrightarrow}
\newcommand{\epito}{\twoheadrightarrow}

\newcommand{\monoto}{\rightarrowtail}

\newcommand\epidownarrow{\mathrel{\rotatebox[origin=c]{90}{$\twoheadleftarrow$}}}

\newcommand{\J}{\mathscr{J}}

\newcommand{\SigmaAlg}{\mathop{\mathbf{Alg}}\Sigma}
\newcommand{\SigmaOAlg}{\mathop{\mathbf{Alg}}\Sigma_{\leq}}
\newcommand{\Var}{\mathsf{Var}}
\newcommand{\K}{\mathscr{K}}

\newcommand{\Nat}{\mathbb{N}}

\newcommand{\f}{{\mathsf{f}}}


\DeclareBoldMathCommand\boldlsqbracket{\left[}
\DeclareBoldMathCommand\boldrsqbracket{\right]}
\newcommand{\func}[2]{\boldlsqbracket#1, #2\boldrsqbracket}

\newcommand{\SigStr}{\Sigma\text{-}\mathbf{Str}}
\newcommand{\SigopStr}{\Sigma_\op\text{-}\mathbf{Str}}



\begin{document}
%
%
\FXRegisterAuthor{sm}{asm}{SM}
\FXRegisterAuthor{ja}{aja}{JA}
\FXRegisterAuthor{hu}{ahu}{HU}
\FXRegisterAuthor{lt}{alt}{LT}

%
%

\title{Reiterman's Theorem on Finite Algebras for a Monad}

\author[J.~Ad\'amek]{Ji\v{r}\'i Ad\'amek}
\authornote{Ji\v{r}\'i Ad\'amek acknowledges support by the Grant Agency of the Czech Republic under the grant 19-00902S.}          
\affiliation{
  \department{Department of Mathematics, Faculty of Electrical Engineering}              
  \institution{Czech Technical University in Prague, Czech Recublic, and Technische Universität Braunschweig}          
  \country{Germany}                    
}
\email{j.adamek@tu-braunschweig.de}          

\author[L.-T.~Chen]{Liang-Ting Chen}
\affiliation{
  \department{Institute of Information Science}              
  \institution{Academia Sinica}            
  \country{Taiwan}                    
}
\email{liang.ting.chen.tw@gmail.com}          

\author[S.~Milius]{Stefan Milius}
\authornote{Stefan Milius acknowledges support by the Deutsche
  Forschungsgemeinschaft (DFG) under project MI 717/5-2 and as part of the Research and Training Group 2475 ``Cybercrime and Forensic Computing'' (393541319/GRK2475/1-2019)}          
\affiliation{
  \institution{Friedrich-Alexander-Universität Erlangen-Nürnberg}            
  \streetaddress{Martensstr. 3}
  \city{Erlangen}
  \postcode{91058}
  \country{Germany}                    
}
\email{mail@stefan-milius.eu}          

\author[H.~Urbat]{Henning Urbat}
\authornote{Henning Urbat acknowledges support by Deutsche Forschungsgemeinschaft (DFG) under project SCHR~1118/8-2.}          
\affiliation{
  \institution{Friedrich-Alexander-Universität Erlangen-Nürnberg}            
  \streetaddress{Martensstr. 3}
  \city{Erlangen}
  \postcode{91058}
  \country{Germany}                    
}
\email{henning.urbat@fau.de}          

\keywords{Monad, Pseudovariety, Profinite Algebras}

\terms{Theory}

\begin{CCSXML}
<ccs2012>
<concept>
<concept_id>10003752.10003766.10003767.10003768</concept_id>
<concept_desc>Theory of computation~Algebraic language theory</concept_desc>
<concept_significance>500</concept_significance>
</concept>
</ccs2012>
\end{CCSXML}

\ccsdesc[500]{Theory of computation~Algebraic language theory}

\begin{abstract}
  Profinite equations are an indispensable tool for the algebraic
  classification of formal languages. Reiterman's theorem states that
  they precisely specify pseudovarieties, i.e.~classes of finite
  algebras closed under finite products, subalgebras and quotients. In
  this paper, Reiterman's theorem is generalized to finite
  Eilenberg-Moore algebras for a monad~$\MT$ on a category~$\D$: we
  prove that a class of finite $\MT$-algebras is a pseudovariety iff
  it is presentable by profinite equations. As a key technical
  tool, we introduce the concept of a profinite monad $\hatT$
  associated to the monad $\MT$, which gives a categorical view of the
  construction of the space of profinite terms.
\end{abstract}

\maketitle
%
%
\section{Introduction}\label{sec:intro}

One of the main principles of both mathematics and computer science is the specification of structures in terms of equational properties. The first systematic study of equations as mathematical objects was pursued by Birkhoff~\cite{Birkhoff35} who proved that a class of algebraic structures over a finitary signature $\Sigma$ can be specified by equations between $\Sigma$-terms if and only if it is closed under quotient algebras (a.k.a.~homomorphic images), subalgebras, and products. This fundamental result, known as the \emph{HSP theorem}, lays the ground for universal algebra and has been extended and generalized in many directions over the past 80 years, including categorical approaches via Lawvere theories~\cite{arv11,lawvere63} and monads~\cite{manes76}.

While Birkhoff's seminal work and its categorifications are concerned with general algebraic structures, in many computer science applications the focus is on \emph{finite} algebras. For instance, in automata theory, regular languages (i.e.~the behaviors of classical finite automata) can be characterized as precisely the 
languages recognizable by finite monoids. This algebraic point of view leads to important insights, including decidability results. As a prime example, Sch\"utzenberger's 
theorem~\cite{sch65} asserts that star-free regular languages 
correspond to \emph{aperiodic} finite monoids, i.e.~monoids where the unique idempotent power $x^\omega$ of any element $x$ satisfies $x^\omega = x\o x^\omega$. As an immediate application, one obtains the
decidability of star-freeness. However, the identity $x^\omega=x\o x^\omega$ is not an equation in Birkhoff's sense since the operation $(\dash)^\omega$ is not a part of the signature of monoids. Instead, it is an instance of a \emph{profinite equation}, a topological generalization of Birkhoff's concept introduced by Reiterman~\cite{Reiterman1982}. (Originally, Reiterman worked with the equivalent concept of an \emph{implicit equation}, cf.~Section \ref{sec:profinite-terms}.) Given a set $X$ of variables and $x\in X$, the expression $x^\omega$ can be interpreted as an element of the Stone space $\widehat{X^*}$ of \emph{profinite words}, constructed as the cofiltered limit of all finite quotient monoids of the free monoid $X^*$. Analogously, over general signatures $\Sigma$ one can form the Stone space of \emph{profinite $\Sigma$-terms}. Reiterman proved that a class of finite $\Sigma$-algebras can be specified by profinite equations (i.e.~pairs of profinite terms) if and only if it is closed under quotient algebras, subalgebras, and {finite} products. This result establishes a finite analogue of Birkhoff's HSP theorem.

In this paper, we develop a categorical approach to Reiterman's theorem and the theory of profinite equations. The idea is to replace monoids (or general algebras over a signature) by Eilenberg-Moore algebras for a monad $\MT$ on an arbitrary base category $\D$. As an important technical device, we introduce a categorical abstraction of the space of profinite words. To this end, we
consider a full subcategory $\Df$ of $\D$ of ``finite''
objects and form the category $\Pro \Df$, the free completion of $\Df$ under cofiltered limits. We then show that the monad $\MT$ naturally induces a monad $\hatT$ on $\Pro{\Df}$, called the
\emph{profinite monad} of~$\MT$, whose free algebras $\hatT X$ serve as domains for profinite equations. For example, for $\D = \Set$ and the full subcategory $\Set_\f$ of finite sets, we get $\Pro{\Set_\f}=\Stone$, the category of Stone spaces. Moreover, if $\MT X= X^*$ is
the finite-word monad (whose algebras are precisely monoids), then $\hatT$
is the monad of profinite words on $\Stone$; that is, 
$\hatT$ associates to each finite Stone space (i.e.~a finite set with
the discrete topology) $X$ the space 
$\widehat{X^*}$ of profinite words on $X$. Our overall approach can thus be summarized by the following diagram, where the skewed functors are inclusions and the horizontal ones are forgetful functors.

\[
\vcenter{
  \xymatrix@C-1em{
    \quad\ \Stone
    \ar[rr]
    \turnradius{10pt}
    \ar@{<-} `u^r []
    \POS(-10,7) *{\labelstyle\widehat{(\dash)^*}}  
    & &
    \Set
    \turnradius{10pt}
    \ar@{<-} `u_l []
    \POS(39,7) *{\labelstyle(\dash)^*}
    \\
    &
    \Set_\f \ar@{ >->}[ul] \ar@{>->}[ur]
  }
}
\qquad\rightsquigarrow\qquad 
\vcenter{
  \xymatrix@C-1em{
    \quad\ \Pro{\Df}
    \ar[rr] 
    \turnradius{10pt}
    \ar@{<-} `u^r []
    \POS(-9,7) *{\labelstyle\hatT}
    & &
    \D\!\!
    \turnradius{10pt}
    \ar@{<-} `u_l []
    \POS(35,7) *{\labelstyle\MT}
    \\
    &
    \Df
    \ar@{>->}[ul]
    \ar@{>->}[ur]  
  }
}
\]
It turns out that many familiar properties of the space of profinite words can be developed at the abstract level of profinite monads and their algebras. Our main result is the 

\medskip\noindent \textbf{Generalized Reiterman Theorem.} A class of finite $\MT$-algebras is presentable by profinite equations if and only if it is closed under quotient algebras, subalgebras, and finite products.

\medskip\noindent Here, \emph{profinite equations} are modelled categorically as finite quotients $e\colon \hatt X \epito E$ of the object $\hatt X$ of generalized profinite terms. If the category $\D$ is $\Set$ or, more generally, a category of first-order structures, we will see that this abstract concept of an equation is equivalent to the familiar one: $\hatt X$ is a topological space and quotients $e$ as above can be identified with sets of pairs $(s,t)$ of profinite terms $s,t\in \hatt X$. Thus, our categorical results instantiate to the original Reiterman theorem~\cite{Reiterman1982} ($\D=\Set$), but also to its versions for ordered algebras ($\D=\Pos$) and for first-order structures due to Pin
 and Weil~\cite{PinWeil1996}.

Our proof of the Generalized Reiterman Theorem is purely categorical and  relies on general properties of (codensity) monads, free completions and locally finitely copresentable categories. It does not employ any topological methods, as opposed to all known proofs of Reiterman's theorem and its variants. The insight that topological reasoning can be completely avoided in the profinite world is quite surprising, and we consider it as one of the main contributions of our paper.

\paragraph{\textbf{Related work}} This paper is the full version of an extended abstract~\cite{camu16} presented at FoSSaCS 2016. Besides providing complete proofs of all results, the presentation is significantly more general than in \emph{op.~cit.}: there we restricted ourselves to base categories $\D$ which are varieties of (possibly ordered) algebras, and the development of the profinite monad and its properties used results from topology. In contrast, the present paper works with general categories $\D$ and develops all required profinite concepts in full categorical abstraction, with  topological arguments only appearing in the verification that our concrete instances satisfy the required categorical properties.

An important application of the Generalized Reiterman Theorem and the
profinite monad can be found in algebraic language theory: we showed
that given a category $\Cat$ dually equivalent to $\Pro{\Df}$, the
concept of a profinite equational class of finite $\MT$-algebras
dualizes to the concept of a \emph{variety of $\MT$-recognizable
  languages in $\Cat$}. For instance, for $\D = \Set$ and
$\Pro{\Df}=\Stone$, the classical Stone duality yields the category
$\Cat=\BA$ of boolean algebras, and for the monad $\MT X = X^*$ on
$\Set$ the dual correspondence gives Eilenberg's fundamental
\emph{variety theorem} for regular
languages~\cite{Eilenberg1976}. Using our duality-theoretic approach
we established a categorical generalization of Eilenberg's theorem and
showed that it instantiates to more than a dozen Eilenberg-type
results known in the literature, along with a number of new
correspondence results~\cite{uacm17}. Let us also mention some of the
very few known instances of Eilenberg-type results not obtained using
an application of the Generalized Reiterman Theorem. The first one is
a recent Eilenberg-type correspondence for regular
languages~\cite{bmu21}, which is based on lattice bimodules, a new
algebraic structure for recognition originally proposed by Pol\'ak and
Klima~\cite{KlimaP19} under the name lattice algebras. The second
result is our first Eilenberg-type corresponding for nominal
languages~\cite{um19}. Finally, there are Eilenberg-type
correspondences for varieties of non-regular languages; they appear in work of
Behle et al.~\cite{BehleEA11} and as instances of Salamanca's general 
framework~\cite{Salamanca17}.

Recently, an abstract approach to HSP-type theorems~\cite{mu19} has
been developed that not only provides a common roof over Birkhoff's
and Reiterman's theorem, but also applies to classes of algebras with
additional underlying structure, such as ordered, quantitative, or
nominal algebras. The characterization of pseudovarieties in terms of
pseudoeuqations given in \Cref{prop:pseudovariety=pseudoequations}
is a special case of the HSP theorem in \emph{op.~cit}.

\section{Profinite Completion}\label{sec:profinite-comp}

In this preliminary section, we review the profinite completion
(commonly known as pro-completion) of a category and describe it for
the category $\Sigma$-$\mathsf{Str}$ of structures over a first-order
signature $\Sigma$.

\begin{remark}\label{rem:cofiltered}
Recall that a category is \emph{cofiltered} if every finite
subcategory has a cone in it.  For example, every cochain (i.e.~a
poset dual to an ordinal number) is cofiltered.  A \emph{cofiltered
  limit} is a limit of a diagram with a small cofiltered diagram
scheme. A functor is \emph{cofinitary} if it preserves cofiltered
limits. An object $A$ of a category $\mathscr{C}$ is called
\emph{finitely copresentable} if the
functor~$\mathscr{C}(-, A)\colon \mathscr{C} \to \Set^\op$ is
cofinitary.  The latter means that for every limit cone
$c_i\colon C \to C_i$ ($i \in \mc{I}$) of a cofiltered diagram,
\begin{enumerate}
\item each morphism $f\colon C \to A$ factorizes through some
  $c_i\colon C \to C_i$ as $f = g \o c_i$, and
  
\item the morphism $g\colon C_i \to
  A$ is \emph{essentially unique}, i.e.~given another factorization
  $f = h \o c_i$, there is a connecting morphism $c_{ji} \colon C_j
  \to C_i$ with $g\o c_{ji}=h\o c_{ji}$:
  \[
    \xymatrix{
      & C \ar[r]^{f} \ar[d]_{c_i} \ar[ld]_{c_j} & A \\
      C_j \ar@{-->}[r]_{c_{ji}} & C_i \ar@<.5ex>[ru]^{g} \ar@<-.5ex>[ru]_{h}
    }
  \]
\end{enumerate}
The dual concept is that of a \emph{filtered colimit}.
\end{remark}
\begin{notation}\label{not:proc}
\begin{enumerate}
\item  The free completion of a category $\Cat$ under cofiltered limits,
  i.e.~the \emph{pro-completion}, is denoted by
  \[
    \Pro\Cat.
  \]
  This is a category with cofiltered limits together with a full
  embedding $E\colon \Cat\monoto \Pro{\Cat}$ satisfying the following universal
  property:
  \begin{enumerate}
  \item Every functor $F\colon \Cat\to\KCat$ into a category $\KCat$
    with cofiltered limits admits a cofinitary extension
    $\overline{F}\colon \Pro\Cat\to \KCat$, i.e.~the triangle below
    commutes: 
    \[
      \xymatrix{
        \Cat \ar[r]^E \ar[dr]_{F} & \Pro{\Cat} \ar@{-->}[d]^{\overline{F}} \\
        & \KCat
      }
    \]
  \item The functor $\overline{F}$ is \emph{essentially unique}, i.e.~for every
    cofinitary extension $G$ of $F$ there exists a unique natural
    isomorphism $i\colon \overline{F}\xto{\cong} G$ with $iE = \id_F$.
  \end{enumerate}
  More precisely, the full embedding $E$ \emph{is} the pro-completion,
  but we will often simply refer to $\Pro\Cat$ as the pro-completion instead.
\item
Dually, the free completion of $\Cat$ under filtered colimits, i.e.~the \emph{ind-completion}, is denoted by
\[ \Ind{\Cat}.\]
\end{enumerate}
\end{notation}
Some standard results on ind- and pro-completions can be found in the Appendix.

\begin{expl}\label{ex:pro}
  \begin{enumerate}
  \item\label{ex:pro:1} Let $\Set_\f$ be the category of finite sets
    and functions.  Its pro-completion is the category
      \[
        \Pro\Set_\f = \Stone
      \]
      of \emph{Stone spaces}, i.e.~compact topological spaces in which
      distinct elements can be separated by clopen subsets.  Morphisms
      are the continuous functions.  The embedding
      $\Set_\f\monoto \Stone$ identifies finite sets with finite
      discrete spaces.  This is a consequence of the Stone
      duality~\cite{Johnstone1982} between $\Stone$ and the category
      $\BA$ of boolean algebras, and its restriction to finite sets and
      finite Boolean algebras. In fact, since $\BA$ is a finitary variety, it
      is the ind-completion of its full subcategory $\BA_\f$ of finitely presentable objects, which are precisely the finite Boolean
      algebras.  Therefore
      \[
        \Pro\Set_\f = (\Ind \Set_\f^\op)^\op \cong (\Ind \BA_\f)^\op
        \cong \BA^\op \cong \Stone.
      \]

    \item\label{ex:pro:2} For the category of finite posets and
      monotone functions, denoted by $\Pos_\f$, we obtain the category
      \[
        \Pro\Pos_\f = \Priest
      \]
      of \emph{Priestley spaces}, i.e.~ordered Stone spaces such that
      any two distinct elements can be separated by clopen upper sets.
      Morphisms in $\Priest$ are continuous monotone functions.  This
      follows from the Priestley duality~\cite{Priestley1972}
      between~$\Priest$ and bounded distributive lattices.  The
      argument is analogous to item~\ref{ex:pro:1}: finite,
      equivalently finitely presentable, distributive lattices dualize
      to finite posets with discrete topology.
  \end{enumerate}
\end{expl}

\begin{notation}[First-order structures]
  We will often work with the category 
  \[
    \SigStr
  \]
  of $\Sigma$-structures and $\Sigma$-homomorphisms for a first-order
 many-sorted signature $\Sigma$.  Given a set $\mc{S}$ of sorts, an
  \emph{$\mc{S}$-sorted signature} $\Sigma$ consists of
(1) operation symbols $\sigma\colon s_1, \ldots, s_n \to s$ where $n\in\Nat$,
    the sorts $s_i$ form the domain of $\sigma$ and $s$ is its
    codomain, and (2) relation symbols $r\colon s_1,\ldots, s_m$ where
    $m \in \Nat^+= \Nat \setminus \{0\}$.
  A \emph{$\Sigma$-structure} is an $\mc{S}$-sorted set
  \[
    A = (A^s)_{s \in \mc{S}} \quad \text{in} \quad \Set^{\mc{S}}
  \]
  with (1) an operation~$\sigma_A\colon A^{s_1} \times \dots \times A^{s_n} \to A^s$ for every operation symbol $\sigma\colon s_1,\ldots,s_n \to s$, and (2) a relation $r_A \subseteq A^{s_1} \times \dots A^{s_n}$ for every relation symbol $r\colon s_1,\ldots, s_n$.
  A \emph{$\Sigma$-homomorphism} is an $\mc{S}$-sorted function
  $f\colon A \to B$ which preserves operations and relations in the
  usual sense.  We denote by $\SigStr_\f$ the full subcategory of
  $\SigStr$ given by all $\Sigma$-structures $A$ where each $A^s$ is
  finite.

  When $\mc{S}$ is a singleton, the notion of $\Sigma$-structures
  boils down to a more common situation.  Namely, the arity of an
  operation symbol is given solely by $n \in \Nat$ and that of a
  relation symbol by $m \in \Nat^+$.  A $\Sigma$-structure is a set
  $A$ equipped with an operation $\sigma_A\colon A^n \to A$ for every
  $n$-ary operation symbol $\sigma$ and with a relation $r_A \subseteq A^m$
  for every $m$-ary relation symbol $r$.
\end{notation}

\begin{assumption} \label{assum:finite-sort}
  Throughout the paper, we assume that every signature has a finite set of sorts
  and finitely many relation symbols.
  There is no restriction on the number of operation symbols. 
\end{assumption}

\begin{remark} \label{remark:SigStr-limits}
  \begin{enumerate}
  \item The category $\SigStr$ is complete with limits created at the
    level of $\Set^{\mc{S}}$. More precisely, consider a diagram $D$
    in $\SigStr$ indexed by $\mc{I}$. Let
    $U^s\colon \Set^{\mc{S}} \to \Set$ be the projection sending $B$
    to $B^s$, and let
    \[
      b^s_i\colon B^s \to D_i^s\quad (i \in \mc{I})
    \]
    form limit cones of the diagrams $U^s D$ in $\Set$ for every
    $s \in \mc{S}$. Then the limit of $D$ is the $\mc{S}$-sorted set
    $B \defeq (B^s)$, with
    operations~$\sigma_B\colon B^{s_1} \times \dots \times B^{s_n} \to
    B^s$ uniquely determined by the requirement that each $b_i\colon B
    \to D_i$ preserves~$\sigma$, and with relations
    $r_B \subseteq B^{s_1} \times \dots \times B^{s_n}$ consisting of
    all $n$-tuples $(x_1, \dots, x_n)$ that each function
    $b^{s_1}_i \times \dots \times b^{s_n}_i$ maps into $r_{D_i}$ for
    all $i \in \mc{I}$.  The limit cone is given by
    $(b_i^s)_{s \in \mc{S}}\colon B\to D_i$ for $i\in \mc{I}$.

  \item The category $\SigStr$ is also cocomplete. Indeed, let
    $\Sigma_\op$ be the subsignature of all operation symbols in
    $\Sigma$. Then $\SigopStr$ is a monadic category over
    $\Set^{\mc{S}}$. Since epimorphisms split in $\Set^{\mc{S}}$, all
    monadic categories are cocomplete, see e.g.~\cite{Adamek1977}. The
    category $\SigStr$ has colimits obtained from the corresponding
    colimits in $\SigopStr$ by taking the smallest relations making
    each of the colimit injections a $\Sigma$-homomorphism.
  \end{enumerate}
\end{remark}

\begin{notation}
  The category of Stone topological $\Sigma$-structures and
  continuous $\Sigma$-homo\-mor\-phisms is denoted by
  \[
    \Stone(\SigStr).
  \]
  A \emph{topological $\Sigma$-structure} is an $\mc{S}$-sorted topological space $A = (A^s)$ endowed with a $\Sigma$-structure such that every operation $\sigma_s\colon A^{s_1} \times \dots \times A^{s_n} \to A$ is continuous and for every relation symbol $r$ the relation $r_A \subseteq A^{s_1} \times \cdots \times A^{s_n}$ is a closed subset.
\end{notation}
\begin{remark}\label{rem:stronesstr_limits}
The category $\Stone(\SigStr)$ is complete with limits formed on the level of~$\Set^\S$. This follows from the construction of limits in $\Stone^\S$ and in $\SigStr$.
Thus, the forgetful functor from $\Stone(\SigStr)$ to $\SigStr$ preserves limits.
\end{remark}
The following proposition describes the pro-completion of $\SigStr_\f$.
It is a categorical reformulation of results by Pin and
Weil~\cite{PinWeil1996} on topological $\Sigma$-structures, and also
appears in Johnstone's book~\cite[Prop.~\& Rem.~VI.2.4]{Johnstone1982} for the special case of single-sorted algebras.
We provide a full proof for the convenience of the reader.

\begin{defn}
A Stone topological $\Sigma$-structure is called \emph{profinite} if it is a cofiltered limit in $\Stone(\SigStr)$ of finite $\Sigma$-structures.
\end{defn}

\begin{prop}\label{prop:procomp-sigstr}
  The category $\Pro(\SigStr_\f)$ is the full subcategory of
  $\Stone(\SigStr)$ on all profinite $\Sigma$-structures.
\end{prop}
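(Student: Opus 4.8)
The plan is to realise the pro-completion concretely as the full subcategory $\mathscr{P}$ of $\Stone(\SigStr)$ spanned by the profinite structures, and to do so by checking the standard recognition criterion for pro-completions (of the kind recorded in the Appendix): if a category $\mathscr{K}$ has cofiltered limits, contains $\SigStr_\f$ as a full subcategory each of whose objects is finitely copresentable in $\mathscr{K}$, and has the property that every one of its objects is a cofiltered limit of objects of $\SigStr_\f$, then the essentially unique cofinitary extension $\Pro(\SigStr_\f)\to\mathscr{K}$ of the inclusion (which exists by the universal property in Notation~\ref{not:proc}) is an equivalence of categories. I would apply this with $\mathscr{K}=\mathscr{P}$. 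First observe that $\SigStr_\f$ embeds fully into $\Stone(\SigStr)$: a finite $\Sigma$-structure, viewed as a discrete space, has closed relations and continuous operations, and continuity is automatic for maps between finite discrete spaces; moreover such a structure is a constant (hence cofiltered) limit of finite structures, so it lies in $\mathscr{P}$.

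Next I would verify that $\mathscr{P}$ has cofiltered limits, computed as in $\Stone(\SigStr)$; together with the definition of $\mathscr{P}$ this also yields the third of the three conditions above. Since $\Stone(\SigStr)$ is complete with limits carried by the underlying sorted sets (Remark~\ref{rem:stronesstr_limits}), it suffices to see that $\mathscr{P}$ is closed under cofiltered limits inside $\Stone(\SigStr)$: given a cofiltered diagram of profinite structures, each written as $\lim_i F_i$ with the $F_i$ finite, the resulting iterated cofiltered limit is again a cofiltered limit of finite structures, by the classical interchange lemma for iterated cofiltered limits (see the Appendix). Consequently every object of $\mathscr{P}$ is, by definition, a cofiltered limit in $\mathscr{P}$ of finite structures.

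The crux is the finite copresentability of a finite $\Sigma$-structure $Y$ in $\mathscr{P}$. Let $p_i\colon X\to X_i$ be a cofiltered limit cone in $\mathscr{P}$ and $f\colon X\to Y$ a morphism. I would first reduce to the case that $f$ and all the $p_i$ and connecting maps are surjective: a closed substructure of a profinite structure $\lim_j F_j$ is the cofiltered limit of its finite images in the $F_j$ (by the ``filtered family of nonempty closed subsets of a compact space'' argument), hence profinite, so one may replace $Y$ by $f(X)$ and each $X_i$ by $p_i(X)$. Then, since $|X|=\lim|X_i|$ in $\Stone$ and $|Y|$ is finite, the clopen partition $\{f^{-1}(y)\}_{y\in Y}$ of $|X|$ descends (all projections now being surjective) to a clopen partition of some $|X_i|$, giving a continuous surjection $g\colon X_i\to Y$ with $f=g\o p_i$. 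Surjectivity of $p_i$ on tuples shows immediately that $g$ preserves \emph{every} operation, so the possibly infinite number of operation symbols is harmless. For the relations I would invoke Assumption~\ref{assum:finite-sort}: for each relation symbol $r$, say of arity $m$, one has $p_i^m(r_X)=\bigcap_{j\ge i}p_{ji}^m(r_{X_j})$, a codirected intersection of subsets of the finite set $X_i^m$, which therefore stabilises at some index; choosing $j^\ast$ above the finitely many indices so obtained, the induced map $g\o p_{j^\ast i}\colon X_{j^\ast}\to Y$ carries $r_{X_{j^\ast}}$ into $r_Y$ for all $r$ simultaneously, hence is a $\Sigma$-homomorphism factoring $f$ through $p_{j^\ast}$. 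For essential uniqueness, if $f=g\o p_i=h\o p_i$ with $g,h\colon X_i\to Y$ homomorphisms, then $\{z\in X_i:g(z)=h(z)\}$ is a clopen substructure of $X_i$ containing $p_i(X)=\bigcap_{j\ge i}p_{ji}(X_j)$, so by compactness some $p_{ji}(X_j)$ already lies inside it, i.e.\ $g\o p_{ji}=h\o p_{ji}$.

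The hard part is this last step, and within it the upgrading of the factoring continuous map $g$ from a map of underlying spaces to a genuine $\Sigma$-homomorphism: operations are dealt with uniformly by surjectivity, but relations genuinely require both the reduction to surjections (resting on the fact that closed substructures of profinite structures are profinite) and the finiteness of the relational part of the signature, through the stabilisation of the codirected intersections $\bigcap_{j\ge i}p_{ji}^m(r_{X_j})$. The remaining inputs --- compactness of Stone spaces, descent of clopens of a cofiltered limit to a finite stage, and the interchange of iterated cofiltered limits --- are standard. As an alternative to the recognition criterion, one could instead construct the comparison functor $\Pro(\SigStr_\f)\to\mathscr{P}$ directly as the cofinitary extension of the inclusion and verify fully faithfulness via the hom-set formula $\Pro{\Cat}(\lim_i X_i,\lim_j Y_j)\cong\lim_j\colim_i\Cat(X_i,Y_j)$ --- the finite copresentability step being precisely what makes this formula valid in $\mathscr{P}$.
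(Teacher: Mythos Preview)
Your approach is correct and follows the same overall strategy as the paper: both verify the recognition criterion of \Cref{lem:char-of-procompletion} (cofiltered limits exist, finite structures are finitely copresentable, every object is a cofiltered limit of finite ones). The technical content differs only in how the copresentability step is organised.

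The paper does not reduce to surjective projections. Instead it first proves, via Stone duality (dualising to filtered colimits of finite Boolean algebras), that for every $i$ there exists a connecting map $h\colon B_j\to B_i$ with $h[B_j]=b_i[B]$, and similarly for the image of each relation $r_B$. Then, having factored $f$ through $b_i$ as a sorted continuous map $g\colon B_i\to A$, it shows directly that $\bar g=g\o h\colon B_j\to A$ is a $\Sigma$-homomorphism: the image equalisation lets one replace any tuple in $B_j$ by a tuple in $B$ with the same image in $B_i$, whence preservation of operations and relations follows from the corresponding property of $f$. No change of diagram is needed.

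Your reduction to surjective projections is valid but hides a step you should make explicit: after replacing the $X_i$ by $p_i(X)$, a factorisation of $f$ through the new diagram does not immediately give one through the original, since the $\Sigma$-homomorphism $g'\colon p_i(X)\to Y$ need not extend to a $\Sigma$-homomorphism $X_i\to Y$. The fix is exactly the image-stabilisation you already invoke elsewhere: since $p_i(X)=\bigcap_{j\ge i}p_{ji}(X_j)$ stabilises at some $j^\ast$, the connecting map $p_{j^\ast i}$ factors through $p_i(X)$, and $g'$ composed with this corestriction yields the required $\Sigma$-homomorphism $X_{j^\ast}\to Y$ in the original diagram. Once this is spelled out, the two proofs coincide in substance --- yours trading the Stone-duality lemma for direct compactness/finiteness arguments, at the cost of one extra lifting step.
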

\begin{proof}
  \begin{enumerate}
  \item \label{item:profinite-Stone-1} We first observe that
    cofiltered limits of finite sets in $\Stone$ have the following
    property: If $b_i\colon B \to B_i$ $(i \in \mc{I})$ is a
    cofiltered limit cone such that all $B_i$ are finite, then for
    every $i \in \mc{I}$ there exists a connecting morphism of our
    diagram $h\colon B_j \to B_i$ with the same image as $b_i$:
      \begin{equation}
        b_i[B] = h[B_j].
      \end{equation}
      Since under Stone duality finite Stone spaces dualizes  to finite boolean algebras, it suffices to verify the dual statement about filtered colimits of finite Boolean algebras:
      if $c_i\colon C_i \to C$ ($i \in \mc{I}$) is a filtered colimit cocone of finite Boolean algebras, then for every $i$ there exists a connecting morphism $f\colon C_i \to C_j$ with the same kernel as $c_i$.
      But this is clear: given any pair $x, y \in C_i$ merged by $c_i$, there exists a connecting morphism $f$ merging $x$ and $y$, since filtered colimits are formed on the level of $\Set$.
      Due to $C_i \times C_i$ being finite, we can choose one $f$ for all such pairs.
      
    \item The argument is similar for cofiltered limits of finite $\Sigma$-structures in $\Stone(\SigStr)$:
      Consider a limit cone
      \[
        b_i \colon B \to B_i\qquad (i \in \mc{I})
      \]
      of a cofiltered diagram $D$ in $\Stone(\SigStr)$.
      For every $i \in \mc{I}$, we verify that there is a connecting morphism $h\colon B_j
      \to B_i$ with sorts $h^s$ for $s \in \mc{S}$ such that
      \begin{equation} \label{eq:profinite-connecting}
         b_i^s[B^s] = h^s[B_j^s]
        \qquad\text{for all $s \in \mc{S}$,}  
      \end{equation}
      and
      \begin{equation} \label{eq:profinite-relation} b_i^{s_1} \times
        \dots \times b_i^{s_n}[r_B] = h^{s_1} \times \dots \times
        h^{s_n}[r_B] \qquad \text{for all $r\colon s_1, \ldots, s_n$
          in $\Sigma$.}
      \end{equation}
      Indeed, if we only consider~\eqref{eq:profinite-connecting} then the existence of such an $h$ follows from~\ref{item:profinite-Stone-1} by the assumption that $\mc{S}$ is finite and that $\mc{I}$ is cofiltered.
      For every sort $s$, we have a cofiltered limit $b_j^s\colon B^s \to B_j^s$ in $\Stone$, thus we can apply~\ref{item:profinite-Stone-1} and obtain a connecting morphism $h\colon B_j \to B_i$.
      Again, $\mc{S}$ is finite, so the choice of $h$ can be made independent of $s \in \mc{S}$.

      Next consider~\eqref{eq:profinite-relation} for a fixed relation symbol
      $r\colon s_1, \ldots, s_n$.
      Form the diagram $D_r$ in $\Stone$ with the above diagram scheme $\mc{I}$ and with objects
      \[
        D_r i = r_{B_i} \text{ (a finite discrete space)}.
      \]
      Connecting morphisms are the domain-codomain restrictions of all connecting
      morphisms $B_j \xto{h} B_k$: since $h$ preserves the relation
      $r$, we have
      \[
        h^{s_1} \times \dots \times h^{s_n}[r_{B_j}] \subseteq r_{B_k},
      \]
      and we form the corresponding connecting morphism
      $\overline{h}\colon r_{B_j} \to r_{B_k}$ of $D_r$.  From the
      description of limits in $\SigStr$ in
      \Cref{remark:SigStr-limits} and the fact that limits in
      $\Stone(\SigStr)$ are preserved by the forgetful functor into
      $\SigStr$ by \Cref{rem:stronesstr_limits} we deduce that the
      limit of $D_r $ in $\Stone$ is the space
      $r_B \subseteq B^{s_1} \times \dots \times B^{s_n}$ and the
      limit cone $r_B \to r_{B_j}$, $j \in \mc{I}$, is formed by
      domain-codomain restrictions of
      $b_j^{s_1} \times \dots \times b_j^{s_n}$ for $j \in \mc{I}$.
      Apply~\ref{item:profinite-Stone-1} to this cofiltered limit to
      find a connecting morphism $h \colon B_j \to B_i$ of $D$ satisfying
      \eqref{eq:profinite-relation} for any chosen relation symbol~$r$
      of $\Sigma$.  Since we only have finitely many relation symbols
      by \Cref{assum:finite-sort}, we conclude that $h$ can be
      chosen to satisfy~\eqref{eq:profinite-relation}.

    \item Denote the full subcategory formed by profinite $\Sigma$-structures by
      \[
        \L \subseteq \Stone(\SigStr).
      \]
      In order to prove that $\L$ forms the pro-completion of
      $\SigStr_\f$, we verify the conditions given in
      \Cref{lem:char-of-procompletion}. By construction,
      conditions~\ref{itm:char-of-procompletion-1}
      and~\ref{itm:char-of-procompletion-3} hold.  It remains to
      prove condition~\ref{itm:char-of-procompletion-2}: every finite
      $\Sigma$-structure $A$ is finitely copresentable in $\L$.
      Hence, consider a limit cone
      \[
        b_i \colon B \to B_i\quad(i \in \mc{I})
      \]
      of a cofiltered diagram $D$ in $\L$.  Due to the definition of
      $\L$, each $B_i$ is a cofiltered limit of finite structures. Therefore, without loss of generality, we may assume that all $B_i$ are finite.  We need to show
      that for every homomorphism
      $f = (f^s)_{s \in \mc{S}} \colon B \to A$ into a finite
      $\Sigma$-structure $A=(A^s)_{s\in \S}$, there is an essentially
      unique factorization through some~$b_i$.  For every sort~$s$, we
      have a projection~$V^s\colon \L \to \Stone$, and the cofiltered
      diagram $V^s D$ has the limit cone
      $f_i^s\colon B^s \to B_i^s\;(i \in \mc{I})$.  Since each $A^s$
      is finite, the fact that $\Stone$ is the pro-completion of
      $\Set_\f$ implies that for every sort~$s$ there is $i \in \mc{I}$
      and an essentially unique factorization of $f^s$ as follows
        \[
          \xymatrix{
            B^s \ar[r]^{f^s} \ar[d]_{b_i^s} & A^s \\
            B_i^s \ar[ru]_{g^s} 
          }
        \]
        By \Cref{assum:finite-sort} the set $\mc{S}$ is finite, so we can choose $i$ independent of~$s$ and thus obtain a continuous $\mc{S}$-sorted function
        \[
          g = (g^s) \colon B_i \to A
          \quad\text{in } \Stone^{\mc{S}}
        \]
        which factorizes $f$, i.e.~$f = g \o b_i$.

        All we still need to prove is that we can choose our $i$
        and $g$ so that, moreover, $g$ is a $\Sigma$-homomorphism. 
        The essential uniqueness of $g$ then follows from the corresponding property of $g$ in $\Stone$.

        Let $h\colon B_j \to B_i$ be a connecting map
        satisfying~\eqref{eq:profinite-connecting}
        and~\eqref{eq:profinite-relation}. Choose $j$ in lieu of $i$ and
        $\overline{g} = g \o h$ in lieu of $g$. We conclude that $\overline{g}$
        is a morphism of $\Stone^{\mc{S}}$ factorizing $f$ through the limit map
        $b_j$:
        \[
          \xymatrix@-.5em{
            B \ar[dd]_{b_j} \ar[rr]^{f} \ar@{-->}[rd]^{b_i} & & A \\
                                                            & B_i \ar[ru]^{g} \\
            B_j \ar[ru]^{h} \ar `rd[ru] `[rruu]_{\overline{g}} [rruu]
          }
        \]
        Moreover, we prove that $\overline{g}$ is a $\Sigma$-homomorphism:
      \end{enumerate}
      \begin{enumerate}[label=(3\alph*)]
      \item For every operation symbol
        $\sigma\colon s_1\dots s_n \to s$ in $\Sigma$ and every
        $n$-tuple
        $(x_1, \ldots, x_n) \in B_j^{s_1} \times \dots \times
        B_j^{s_n}$ we have
            \[
              \overline{g}^s \o \sigma_{B_j}(x_1, \ldots, x_n) =
              \sigma_A\left(\overline{g}^{s_1}(x_1), \ldots,
              \overline{g}^{s_n}(x_n)\right).
            \]
            Indeed, choose $y_k \in B^{s_k}$ with
            $b_i^{s_k}(y_k) = h^{s_k}(x_k)$, $k = 1, \ldots, n$, using
            \eqref{eq:profinite-connecting}. Then
            \begin{align*}
              \overline{g}^s \o \sigma_{B_j}(x_1,\ldots,x_n)
              & = g^s \o h^s \o \sigma_{B_j}(x_1,\ldots,x_n)
              && \overline{g} = g \o h
              \\
              & = g^s \o\sigma_{B_i}(h^{s_1}(x_1),\ldots,
              h^{s_n}(x_n))
              && \text{$h$ a $\Sigma$-homomorphism}
              \\
              & = g^s \o \sigma_{B_i}(b_i^{s_1}(y_1),\ldots,b_i^{s_n}(y_n)) && b_i^{s_k}(y_k) =
              h^{s_k}(x_k) \\
              & = g^s \o b_i^s \o \sigma_B(y_1,\ldots,y_n) && \text{$b_i$ a $\Sigma$-homomorphism} \\
              & = \sigma_A(g^{s_1} \o b_i^{s_1}(y_1), \ldots, g^{s_n} \o b_i^{s_n}(y_n)) && \text{$g \o b_i = f$ a
              $\Sigma$-homomorphism} \\
              & = \sigma_A(g^{s_1} \o h^{s_1}(x_1),\ldots, g^{s_n} \o h^{s_n}(x_n) ) && b_i^{s_k}(y_k) =
              h^{s_k}(x_k) \\
              & = \sigma_A(\overline{g}^{s_1}(x_1),\ldots, \overline{g}^{s_n}(x_n)) && \overline{g} = g \o h.
            \end{align*}
          

          \item For every relation symbol $r\colon s_1,\ldots,s_n$ in $\Sigma$, we have
            that 
            \[
              (x_1, \ldots, x_n) \in r_{B_j}
              \text{ implies }
              (\overline{g}^{s_1}(x_1), \ldots, \overline{g}^{s_n}(x_n)) \in
              r_A. 
            \]
            Indeed, using \eqref{eq:profinite-relation}, we can choose $(y_1, \ldots, y_n) \in
            r_B$ with
            \[
              (b_i^{s_1}(y_1), \ldots, b_i^{s_n}(y_n)) = (h^{s_1}(x_1), \ldots,
              h^{s_n}(x_n)). 
            \]
            Then the $n$-tuple
            \[
              (\overline{g}^{s_1}(x_1), \ldots, \overline{g}^{s_n}(x_n))
              = (g^{s_1}\o b_i^{s_1}(y_1), \ldots, g^{s_n} \o b_i^{s_n}(y_n))
            \]
            lies in $r_A$ because $g \o b_i = f$ is a $\Sigma$-homomorphism. \qedhere
        \end{enumerate}
\end{proof}

\begin{notation}
  Let $\D$ be a full subcategory of $\SigStr$.
  We denote by
  \[
    \Stone\D
  \]
  the full subcategory of $\Stone(\SigStr)$ on all Stone topological $\Sigma$-structures whose $\Sigma$-structure lies in $\D$.
  Moreover, let $\Df$ denote the full subcategory of $\D$ on all finite objects, i.e.~$D\in \Df$ if each $D^s$ is finite.
\end{notation}

\begin{cor}\label{coro:pro-Df} Let $\D$ be a full subcategory
  of $\SigStr$ closed under cofiltered limits.  Then $\Pro{\Df}$ is
  the full subcategory of $\Stone\D$ given by all profinite $\D$-structures,
  i.e.~cofiltered limits of finite $\Sigma$-structures in $\D$.
\end{cor}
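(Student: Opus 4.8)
The plan is to deduce this from \Cref{prop:procomp-sigstr} together with the characterization of pro-completions in \Cref{lem:char-of-procompletion}, applied to the full subcategory $\L' \seq \Stone\D$ of profinite $\D$-structures. First I would check that cofiltered limits in $\Stone\D$ are computed as in $\Stone(\SigStr)$: by \Cref{rem:stronesstr_limits} limits in $\Stone(\SigStr)$ are formed on the level of $\Set^\S$, and by \Cref{remark:SigStr-limits} the same is true for limits in $\SigStr$; since $\D$ is closed under cofiltered limits in $\SigStr$, the underlying $\Sigma$-structure of any cofiltered limit in $\Stone(\SigStr)$ of objects of $\Stone\D$ again lies in $\D$. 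Hence $\Stone\D$ is closed under cofiltered limits in $\Stone(\SigStr)$; in particular $\Stone\D$ has all cofiltered limits and the inclusion $\Stone\D \monoto \Stone(\SigStr)$ is cofinitary. Moreover every profinite $\D$-structure, being a cofiltered limit of finite $\Sigma$-structures, is in particular a profinite $\Sigma$-structure, so $\L' \seq \L$, where $\L = \Pro(\SigStr_\f)$ denotes the category of profinite $\Sigma$-structures identified in \Cref{prop:procomp-sigstr}, and $\L'$ is closed under cofiltered limits inside $\L$.

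Next I would verify the three conditions of \Cref{lem:char-of-procompletion} for the pair $\Df \seq \L'$. Condition~\ref{itm:char-of-procompletion-1} (that $\Df$ is a full subcategory of $\L'$) is immediate, as a finite $\D$-structure is trivially a cofiltered limit of itself. Condition~\ref{itm:char-of-procompletion-3} (that $\L'$ is the closure of $\Df$ under cofiltered limits) follows from the definition of $\L'$ once one observes that an iterated cofiltered limit is again a cofiltered limit: a cofiltered limit of objects each presented as a cofiltered limit of finite $\D$-structures can be rewritten as a single cofiltered limit of finite $\D$-structures via the standard diagram-of-diagrams construction, using that limits in $\Stone\D$ are formed pointwise. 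Finally, for condition~\ref{itm:char-of-procompletion-2} I would show that every finite $\D$-structure $A$ is finitely copresentable in $\L'$: by \Cref{prop:procomp-sigstr} it is finitely copresentable in $\L$, and since $\L' \seq \L$ is full and closed under cofiltered limits, every cofiltered limit cone in $\L'$ is also one in $\L$; hence the required factorizations of morphisms $B \to A$ through the cone exist in $\L$ and, by fullness of $\L'$, already live in $\L'$, while essential uniqueness transfers verbatim. Applying \Cref{lem:char-of-procompletion} then identifies $\L'$ with $\Pro{\Df}$.

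I do not anticipate a genuine obstacle; the only points requiring care are the two ``computed as in $\Stone(\SigStr)$'' claims — namely that cofiltered limits in $\Stone\D$ agree with those in $\Stone(\SigStr)$ (resting on $\D$ being closed under cofiltered limits) and that the class of profinite $\D$-structures is closed under cofiltered limits (the iterated-limit bookkeeping) — together with the transfer of finite copresentability along the full, cofiltered-limit-closed inclusion $\L' \monoto \L$. Once these are settled, the corollary is a direct application of the characterization lemma.
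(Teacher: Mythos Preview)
Your approach is correct and genuinely different from the paper's. The paper simply remarks that the proof of \Cref{prop:procomp-sigstr} goes through verbatim for $\D$ in place of $\SigStr$, since the only ingredient used there is the description of cofiltered limits, and $\D$ is closed under those. You instead \emph{deduce} the corollary from \Cref{prop:procomp-sigstr} by transferring finite copresentability along the full, cofiltered-limit-closed inclusion $\L'\monoto\L$ and then invoking \Cref{lem:char-of-procompletion}. Your route is a bit longer but more modular: it isolates exactly which abstract closure properties make the result go through, and avoids revisiting the combinatorics of item~(2) in the proof of \Cref{prop:procomp-sigstr}.

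One organizational slip to fix: what you call ``Condition~\ref{itm:char-of-procompletion-1}'' is actually the hypothesis that $\Df$ sits fully inside $\L'$, whereas the label~\ref{itm:char-of-procompletion-1} refers to the requirement that $\L'$ have cofiltered limits. You do establish the latter, but via the iterated-limit argument you later attribute to condition~\ref{itm:char-of-procompletion-3}. Condition~\ref{itm:char-of-procompletion-3} itself (every object of $\L'$ is a cofiltered limit of objects of $\Df$) holds by definition and needs no iterated-limit bookkeeping; the iterated-limit step is precisely what shows $\L'$ is closed under cofiltered limits in $\L$, hence has them. Rearranging these two sentences will make the argument read cleanly.
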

The proof is completely analogous to that of \Cref{prop:procomp-sigstr}: the
only fact we used in that proof was the description of cofiltered limits in
$\SigStr$. 


\begin{expl}
  For $\D = \Pos$, we get an alternative description of the category
  $\Priest$ of \Cref{ex:pro}\ref{ex:pro:2}.  For the signature $\Sigma$ with
  a single binary relation, $\Pos$ is a full subcategory of $\SigStr$.
  The category $\Stone(\SigStr)$ is that of graphs on Stone spaces.
  By \Cref{coro:pro-Df}, $\Pro(\Pos_\f)$ is the category of all
  profinite posets, i.e.~Stone graphs that are cofiltered limits of
  finite posets.  Note that every such limit $B = (V, E)$ is a poset:
  given $x \in V$ we have $(x, x) \in E$ because every object of the
  given cofiltered diagram has its relation reflexive.  Analogously,
  $E$ is transitive and (since limit cones are collectively monic)
  antisymmetric.

  Moreover, $B$ is a Priestley space: given $x, y \in V$ with
  $x \not\leq y$, then there exists a member $b_i\colon B \to B_i$ of
  the limit cone with $b_i(x) \not\leq b_i(y)$.  Since $B_i$ is
  finite, and thus carries the discrete topology, the upper set
  $b_i^{-1}(\mathord{\uparrow} x)$ is clopen, and it contains $x$ but
  not $y$.  Conversely, every Priestley space is a profinite poset, as
  shown by Speed~\cite{speed72}.
\end{expl}
\begin{expl}\label{ex:prodf}
  Johnstone~\cite[Thm.~VI.2.9]{Johnstone1982} proves that for a number
  of ``everyday'' varieties of algebras~$\D$, we simply have
  \[
    \Pro{\Df} = \Stone\D.
  \]
  This holds for semigroups, monoids, groups, vector spaces,
  semilattices, distributive lattices, etc.  In contrast, for some
  important varieties $\Pro{\Df}$ is a proper subcategory of
  $\Stone\D$, e.g.~for the variety of lattices or the variety of
  $\Sigma$-algebras where $\Sigma$ consists of a single unary
  operation.
\end{expl}

\begin{remark} \label{remark:factorization}
  \begin{enumerate}
  \item The category $\SigStr$ has a factorization system $(\E,\M)$ where $\E$ consists of all surjective $\Sigma$-homomorphisms (more precisely, every sort is a surjective function) and $\M$ consists of all injective $\Sigma$-homomorphisms reflecting all relations.
    That is, a $\Sigma$-homomorphism $f\colon X \to Y$ lies in $\M$
    iff for every sort~$s$ the function $f^s\colon X^s \to Y^s$ is injective, and for every relation symbol $r\colon s_1,\ldots, s_n$ in $\Sigma$ and every $n$-tuple $(x_1,\ldots, x_n)\in X^{s_1}\times\ldots\times X^{s_n}$ one has
    \[
      (x_1,\ldots,x_n)\in r_X \quad\text{iff}\quad (f^{s_1}(x_1),\ldots, f^{s_n}(x_n))\in r_Y.
    \]
    The $(\E,\M)$-factorization of a $\Sigma$-homomorphism
    $g\colon X \to Z$ is constructed as follows.  Define a
    $\Sigma$-structure $Y$ by $Y^s = g^s[X^s]$ for all sorts
    $s \in \mc{S}$, let the operations of $Y$ be the domain-codomain
    restriction of those of $Z$, and for every relation symbol
    $r\colon s_1,\ldots, s_n$ define $r_Y$ to be the restriction of
    $r_Z$ to $Y$,
    i.e.~$r_Y = r_Z \cap Y^{s_1}\times\ldots\times Y^{s_n}$.  Then the
    codomain restriction of $g$ is a surjective $\Sigma$-homomorphism
    $e\colon X \epito Y$, and the embedding $m\colon Y \monoto Z$ is a
    injective $\Sigma$-homomorphism reflecting all relations.

  \item\label{R:fs:2} Similarly, the category $\Stone(\SigStr)$ has the
    factorization system $(\E,\M)$ where $\E$ consists of all
    surjective morphisms and $\M$ of all relation-reflecting
    monomorphisms.  Indeed, if $f\colon X \to Z$ is a continuous
    $\Sigma$-homomorphism, and if its factorization in $\SigStr$ is given by a
    $\Sigma$-structure $Y$ and $\Sigma$-homomorphisms
    $e\colon X \epito Y$ (surjective) and $m\colon Y \monoto Z$ (injective
    and relation-reflecting), then the Stone topology on $Y$ inherited
    from $Z$ yields, due to $Y = e[X]$ being closed in $Z$, the
    desired factorization in $\Stone(\SigStr)$.
  \end{enumerate}
\end{remark}

\takeout{
\begin{prop}
  Let $\D$ be a full subcategory of $\SigStr$ closed under products and
  subobjects. Then every object of $\Pro{\Df}$ is a cofiltered limit of all of
  its quotients in $\Df$. 
\end{prop}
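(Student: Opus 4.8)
The plan is to combine the concrete description of $\Pro\Df$ with a cofinality argument, so that the only topological input is \Cref{coro:pro-Df} itself. First I would note that a full subcategory of $\SigStr$ closed under products and subobjects is automatically closed under all limits — a limit embeds into a product via a relation-reflecting mono, equalizers being such — and in particular under cofiltered limits, so \Cref{coro:pro-Df} applies: $\Pro\Df$ is the full subcategory of $\Stone\D$ on the profinite $\D$-structures. By \Cref{remark:factorization}, every morphism of $\Stone(\SigStr)$ factors as a surjection followed by a relation-reflecting mono, and when the codomain is finite the intermediate object, being a subobject of a finite $\D$-structure, lies in $\Df\subseteq\Pro\Df$. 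Now fix $B\in\Pro\Df$ and, using the definition of the pro-completion, write it as a cofiltered limit $b_i\colon B\to B_i$ $(i\in\mathcal I)$ of finite objects $B_i\in\Df$.

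The second step turns the projections into finite quotients and shows they already compute $B$. Factorise $b_i=m_i\o e_i$ with $e_i\colon B\epito B_i'$ surjective and $m_i\colon B_i'\monoto B_i$ a relation-reflecting mono; by the previous paragraph $B_i'\in\Df$, so $e_i$ is a finite quotient of $B$. Essential uniqueness of factorisations lets each connecting morphism $d_{ij}\colon B_i\to B_j$ restrict to a morphism $t_{ij}\colon B_i'\to B_j'$ with $t_{ij}\o e_i=e_j$ and $m_j\o t_{ij}=d_{ij}\o m_i$; since the finite quotients of $B$ form a preorder these restrictions are unique, hence compose functorially, and we obtain a diagram $(B_i',t_{ij})$ over $\mathcal I$. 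A short diagram chase — using that each $m_i$ is monic and the cone $(b_i)$ is jointly monic — then shows that $(e_i\colon B\to B_i')_{i\in\mathcal I}$ is itself a limit cone for $(B_i',t_{ij})$.

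Next I would introduce the category $\mathcal Q$ of finite quotients of $B$: objects are the surjections $q\colon B\epito Q$ with $Q\in\Df$, and a morphism $q\to q'$ is the (necessarily unique, automatically surjective) map $Q\to Q'$ commuting with the quotients, so $\mathcal Q$ is a preorder; let $\mathcal D\colon\mathcal Q\to\Pro\Df$ be the forgetful functor $q\mapsto Q$, equipped with the canonical cone $(q)_{q\in\mathcal Q}$ out of $B$. I would then verify that $\mathcal Q$ is cofiltered: it is nonempty (it contains some $e_i$, or $\id_B$ if $\mathcal I$ is empty, in which case $B$ is terminal); parallel morphisms coincide automatically; and for $q_1,q_2\in\mathcal Q$ the image $Q_{12}$ of $\langle q_1,q_2\rangle\colon B\to Q_1\times Q_2$ is finite (a product of finite objects, then a subobject) and lies in $\D$ by closure under products and subobjects, so $q_{12}\colon B\epito Q_{12}$ is a finite quotient admitting $\mathcal Q$-morphisms onto $q_1$ and $q_2$ through the projections.

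Finally, I would show that the functor $J\colon\mathcal I\to\mathcal Q$ sending $i\mapsto e_i$ and $d_{ij}\mapsto t_{ij}$ is initial, i.e.\ that for each $q\in\mathcal Q$ the comma category of pairs $(i,f)$ with $f\colon e_i\to q$ in $\mathcal Q$ is nonempty and connected. It is nonempty because $Q\in\Df$ is finitely copresentable in $\Pro\Df$, so $q$ factors through some $b_i$ as $q=g\o b_i=(g\o m_i)\o e_i$, giving the pair $(i,g\o m_i)$; it is connected because, given $(i,f)$ and $(i',f')$, any $k\in\mathcal I$ with morphisms into $i$ and $i'$ yields two pairs over $k$ whose second components must agree, both being forced by precomposition with the epimorphism $e_k$ to equal $q$. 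Initiality then gives $\lim_{\mathcal Q}\mathcal D=\lim_{\mathcal I}(\mathcal D J)=\lim_{\mathcal I}(B_i',t_{ij})=B$ with limiting cone the canonical one $(q)_{q\in\mathcal Q}$, which is exactly the assertion. The step I expect to be the real obstacle is the coherence bookkeeping of the second paragraph — choosing the images $B_i'$ compatibly along the diagram and checking $(e_i)$ remains a limit cone — since that is the only point where essential uniqueness of factorisations and joint monicity of the original cone genuinely intervene; everything else is formal.
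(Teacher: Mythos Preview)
Your proof is correct and follows essentially the same approach as the paper's: express $B$ as a cofiltered limit of finite objects, factorise the projections to obtain a cofiltered diagram of finite quotients with the same limit, and then argue that this diagram is initial in the full diagram $\mathcal Q$ of finite quotients. Your account is more detailed than the paper's in verifying that $\mathcal Q$ is cofiltered and in spelling out the connectedness half of the initiality check, but the structure and all key ideas coincide.
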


\begin{proof}
  \begin{enumerate}
  \item We work here with $\Pro\Df$ as described in
    \Cref{coro:pro-Df}. Then every object $X$ is a cofiltered limit of
    a diagram $R\colon I \to \Df$.  Let $b_i \colon X \to B_i$
    ($i \in I$) be a limit cone for $R$. Factorize $b_i = m_i \o e_i$
    with $e_i \colon X \epito A_i$ surjective and
    $m_i\colon A_i\monoto B_i$ monic and reflecting relations, see
    \Cref{remark:factorization}. Then $B_i \in \Df$
      implies $A_i \in \Df$. We get a quotient functor
      $\overline{R}\colon I \to \Df$ of $R$ with $\overline{R}(i) = A_i$. To
      every morphism $h\colon i \to j$ of $I$ it assigns the domain-codomain
      restriction $\overline{R} h$ of the connecting morphism $\overline{R}h$
      obtained by the diagonal fill-in:
      \[
        \xymatrix@C-1.5em{
          & & X \ar@{->>}[ld]_{e_i} \ar@{->>}[rd]^{e_j} \\
          & A_i \ar@{ >->}[ld]_{m_i} \ar@{-->}[rr]_{\overline{R}h} & & A_j \ar@{ >->}[rd]^{m_j} \\
          B_i \ar[rrrr] & & & & B_j
        }
      \]
      Since $I$ is a cofiltered category, $\overline{R}$ is a cofiltered diagram
      of finite quotients of $X$. We claim that $e_i\colon X \epito A_i$ ($i \in
      I$) is a limit cone of $\overline{R}$. Indeed, given a cone $f_i\colon Z
      \to A_i$ ($i \in I$) of $\overline{R}$, then for the cone $m_i \o
      f_i\colon Z \to B_i$ of $D$ we have a unique $f\colon Z \to X$ with
      $b_i\o f = m_i \o f_i$. Since $m_i$ is monic, this implies $e_i \o f =
      f_i$. 

      It remains to verify that the diagram of all finite quotients of $X$ has
      the same limit.  The diagram of all finite quotients of $X$ is cofiltered due to $\Df$ being closed under finite products and subobjects in $\D$. Thus, it is sufficient to prove that all $e_i$, $i \in I$, form a final subdiagram, i.e.~for every quotient $q\colon X \to C$ in $\Df$ there exists $i \in I$ with $q \leq e_i$.
      Indeed, the diagram $\overline{R}$ is cofiltered and $C \in \Df$, hence, $q$ factorizes through some of the limit morphisms.\qedhere
  \end{enumerate}
\end{proof}
}
\begin{remark}\label{rem:arrowcat}
  Recall that the \emph{arrow category} $\A^\to$ of a category $\A$
  has as objects all morphisms $f\colon X\to Y$ in $\A$. A morphism
  from $f\colon X\to Y$ to $g\colon U\to V$ in $\A^\to$ is given by a
  pair of morphisms $m\colon X\to U$ and $n\colon Y\to V$ in $\A$ with
  $n\o f = g\o m$.  Identities and composition are defined
  componentwise.  If $\A$ has limits of some type, then also $\A^\to$
  has these limits, and the two projection functors from $\A^\to$ to
  $\A$ mapping an arrow to its domain or codomain, respectively,
  preserve them.
\end{remark}
\begin{lem}\label{lem:epi}
\begin{enumerate}
\item\label{itm:cofilteredepi} For every cofiltered diagram $D$ in $\Set_\f$ with epic connecting maps, the limit cone of $D$ in $\Stone$ is formed by epimorphisms.
\item\label{itm:cofilteredepiarrowcat} For every cofiltered diagram $D$ in $\Stone^\to$ whose objects are epimorphisms in $\Stone$, also $\lim D$ is epic.
\end{enumerate}
\end{lem}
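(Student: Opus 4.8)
For part~\ref{itm:cofilteredepi}, the plan is to reduce immediately to the observation already established at the start of the proof of \Cref{prop:procomp-sigstr}: for any cofiltered limit cone $b_i\colon B\to D_i$ in $\Stone$ in which every $D_i$ is finite, and for each index $i$, there is a connecting morphism $h\colon D_j\to D_i$ of the diagram with $b_i[B]=h[D_j]$. Under the present hypothesis $h$ is epic in $\Set_\f$, hence surjective, so $h[D_j]=D_i$ and therefore $b_i[B]=D_i$; that is, each $b_i$ is surjective, and a surjective morphism of $\Stone$ is an epimorphism.

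For part~\ref{itm:cofilteredepiarrowcat}, I would first use \Cref{rem:arrowcat} to write $\lim D$ as a componentwise limit: the objects of $D$ are surjective continuous maps $f_i\colon X_i\to Y_i$ ($i\in\mc{I}$), and $\lim D$ is the morphism $f\defeq\lim_i f_i\colon X\to Y$, where $X=\lim_i X_i$ and $Y=\lim_i Y_i$ are the limits taken in $\Stone$, with limit cones $p_i\colon X\to X_i$ and $q_i\colon Y\to Y_i$ satisfying $q_i\o f=f_i\o p_i$. It then suffices to show $f$ is surjective. Fix $y\in Y$ and put $y_i\defeq q_i(y)\in Y_i$. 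Each fibre $f_i^{-1}(y_i)\subseteq X_i$ is nonempty (as $f_i$ is onto) and closed (as $X_i$ is Hausdorff), hence a Stone space; moreover, for each connecting morphism of $D$, which is a pair $D\alpha=(\xi\colon X_j\to X_i,\ \upsilon\colon Y_j\to Y_i)$ with $f_i\o\xi=\upsilon\o f_j$, the component $\xi$ restricts to a map $f_j^{-1}(y_j)\to f_i^{-1}(y_i)$, since $\upsilon(y_j)=y_i$ because the $q_i$ form a limit cone. This yields a cofiltered diagram $F\colon\mc{I}\to\Stone$ of nonempty Stone spaces, and a direct check shows that its limit, sitting inside $X=\lim_i X_i$ as a closed subspace, is exactly the fibre $f^{-1}(y)$. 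So everything reduces to the following \emph{Claim}: every cofiltered diagram of nonempty Stone spaces has nonempty limit.

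The Claim I would prove by a standard compactness argument. For a cofiltered diagram $G\colon\mc{J}\to\Stone$, let $P$ be the product of all $G_j$ over objects $j$ of $\mc{J}$; this is a nonempty compact Hausdorff space, and $\lim G$ is the intersection in $P$ of the closed subsets $Z_\alpha=\{(g_j)_j\mid G\alpha(g_{j'})=g_{j''}\}$, one for each morphism $\alpha\colon j'\to j''$ of $\mc{J}$. Any finite intersection $Z_{\alpha_1}\cap\dots\cap Z_{\alpha_n}$ is nonempty: the $\alpha_1,\dots,\alpha_n$ all lie in a finite subcategory $\mc{J}_0\subseteq\mc{J}$, which by cofilteredness admits a cone $c_j\colon j^\ast\to j$ over its objects; choosing any element of the nonempty set $G_{j^\ast}$, propagating it along the $c_j$ to the objects of $\mc{J}_0$, and choosing elements arbitrarily on the remaining objects, produces a family of $P$ lying in each of $Z_{\alpha_1},\dots,Z_{\alpha_n}$ — precisely because the cone $(c_j)$ commutes with every morphism present in $\mc{J}_0$. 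By the finite intersection property in the compact space $P$, $\lim G=\bigcap_\alpha Z_\alpha$ is nonempty. Applying the Claim to $F$ then produces a point of $f^{-1}(y)$, so $f$ is surjective and hence an epimorphism in $\Stone$.

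The hard part is the Claim itself — running the compactness argument for a genuinely cofiltered index category rather than a cochain or a directed poset. The one point to be careful about is that a ``cone over the finite subcategory $\mc{J}_0$'' must commute with every morphism already present in $\mc{J}_0$ (not merely have a leg to each of its objects), which is exactly what makes the propagated family land in every $Z_{\alpha_k}$; granting that, the remainder — the description of limits in $\Stone$ and in the arrow category, closedness of the fibres and of the $Z_\alpha$, and the fact that surjective morphisms of $\Stone$ are epimorphisms — is routine.
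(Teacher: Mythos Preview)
Your proof is correct, but it takes a different route from the paper's. The paper dualizes everything via Stone duality $\Stone\simeq\BA^\op$: part~\ref{itm:cofilteredepi} becomes the statement that a filtered diagram of finite boolean algebras with monic connecting maps has monic colimit injections, and part~\ref{itm:cofilteredepiarrowcat} becomes the statement that a filtered colimit of monomorphisms in $\BA^\to$ is a monomorphism; both are then immediate from the fact that filtered colimits in $\BA$ are created in $\Set$. Your approach is instead topological: for~\ref{itm:cofilteredepi} you recycle the image-matching fact from the proof of \Cref{prop:procomp-sigstr} (itself proved by duality, so there is some overlap in spirit), and for~\ref{itm:cofilteredepiarrowcat} you run a direct finite-intersection-property argument on fibres. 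The paper's route is shorter and avoids the point-set reasoning; your route has the merit of being self-contained and working verbatim in $\KHaus$, which is precisely the ``standard results'' from Ribes--Zalesskii that the paper cites but does not spell out. One minor wording issue: in your compactness step you say the $\alpha_1,\dots,\alpha_n$ lie in a finite \emph{subcategory} $\mc{J}_0$, but the subcategory they generate need not be finite; what you actually want (and what you correctly use) is a cone over the finite \emph{diagram} consisting of those morphisms and their sources and targets, which is exactly what cofilteredness provides.
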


\begin{proof}
  These properties follow easily from standard results about
  cofiltered limits in the category of compact Hausdorff spaces, see
  e.g. Ribes and Zalesskii~\cite[Sec.~1]{Ribes2010}. Here, we give
  an alternative proof using Stone duality, i.e.~we verify that the
  category $\BA$ of boolean algebras satisfies the statements dual
  to~\ref{itm:cofilteredepi} and~\ref{itm:cofilteredepiarrowcat}.

  The dual of~\ref{itm:cofilteredepi} states that a filtered diagram
  of finite boolean algebras with monic connecting maps has a colimit
  in $\BA$ whose colimit maps are monic. This follows from the fact
  that filtered colimits in $\BA$ are created by the forgetful functor
  to $\Set$, and that filtered colimits of monics in $\Set$ clearly
  have the desired property.

  Similarly, the dual of~\ref{itm:cofilteredepiarrowcat} states that a
  filtered colimit of monomorphisms in $\BA^\to$ is a monomorphism,
  which follows from the corresponding property in $\Set^\to$.
\end{proof}

\section{Pseudovarieties}\label{sec:pseudovar}

In universal algebra, a pseudovariety of $\Sigma$-algebras is defined to
be a class of finite algebras closed under finite products,
subalgebras, and quotient algebras. In the present section, we
introduce an abstract concept of pseudovariety in a given category
$\D$ with a specified full subcategory $\Df$. The objects of $\Df$
are called ``finite'', but this is just terminology. Our approach
follows the footsteps of Banaschewski and
Herrlich~\cite{Banaschewski1976} who introduced varieties of objects
in a category $\D$, and proved that they are precisely the full
subcategories of $\D$ presentable by an abstract notion of equation
(see \Cref{D:BH}). Here, we establish a similar result for
pseudovarieties: they are precisely the full subcategories of $\Df$
that can be presented by pseudoequations
(\Cref{prop:pseudovariety=pseudoequations}), which are shown to be
equivalent to profinite equations in many examples
(\Cref{thm:reiterman}).

\begin{assumption}\label{assum:factorization-system}
  For the rest of our paper, we fix a complete category $\D$ with a
  proper factorization system $(\E, \M)$, that is, all morphisms in
  $\E$ are epic and all morphisms in $\M$ are monic. \emph{Quotients}
  and \emph{subobjects} in $\D$ are represented by morphisms in $\E$
  and $\M$, respectively, and denoted by $\epito$ and
  $\monoto$. Moreover, we fix a small full subcategory $\Df$ whose
  objects are called the \emph{finite} objects of $\D$, and denote by
  $\E_\f$ and $\M_\f$ the morphisms of $\Df$ in $\E$ or $\M$,
  respectively. We assume that
  \begin{enumerate}
  \item the category $\Df$ is closed under finite limits and subobjects, and
  \item\label{A:fs:2} every object of $\Df$ is a quotient of some projective object of $\D$. 
  \end{enumerate}
  Here, recall that an object $X$ is called \emph{projective} (more
  precisely, \emph{$\E$-projective}) if for every quotient
  $e \colon P \epito P'$ and every morphism $f\colon X \to P'$ there
  exists a morphism $g\colon X\to P$ with $e\o g = f$.
\end{assumption}



\begin{defn}[Banaschewski and Herrlich~\cite{Banaschewski1976}]\label{D:BH}
  \begin{enumerate}
    \item A \emph{variety} is a full subcategory of $\D$ closed under products,
      subobjects, and quotients.
    \item An \emph{equation} is a quotient $e\colon X\epito E$ of a projective object
      $X$. An object $A$ is said to \emph{satisfy} the equation
      $e$ provided that $A$ is
      \emph{injective} w.r.t.~$e$, that is, if for every morphism
      $g\colon X\to A$ there exists a morphism $h\colon E\to A$ making
      the triangle below commute:
\[ 
\xymatrix@C-1.5em
{
& X \ar@{->>}[dl]_e \ar[dr]^g & \\
E \ar@{-->}[rr]_h && A
}
 \]
  \end{enumerate}
\end{defn}
We note that Banaschewski and Herrlich worked with the factorization
system of regular epimorphisms and monomorphisms. However, all their
results and proofs apply to general proper factorization systems, as
already pointed out in their paper~\cite{Banaschewski1976}.

\begin{expl}\label{ex:eqsigmaalg}
  Let $\Sigma$ be a one-sorted signature of operation symbols. If
  $\D = \Sigma\text{-}\Alg$ is the category of $\Sigma$-algebras with
  its usual factorization system
  ($\E = \text{surjective homomorphisms}$ and $\M =$ injective
  homomorphisms), then the above definition of a variety gives the
  usual concept in universal algebra: a class of $\Sigma$-algebras
  closed under product algebras, subalgebras, and homomorphic images. Moreover, equations in the above categorical sense are expressively equivalent to equations  $t = t'$ between $\Sigma$-terms in the usual sense:
\begin{enumerate}
\item Given a term equation $t = t'$, where
  $t, t' \in T_\Sigma X_0$ are taken from the free algebra of all $\Sigma$-terms in the set
  $X_0$ of variables, let $\sim$ denote the least
  congruence on $T_\Sigma X_0$ with $t \sim t'$. The corresponding quotient morphism
  $e\colon T_\Sigma X_0 \twoheadrightarrow T_\Sigma
  X_0/\mathord{\sim}$ is a categorical equation satisfied by precisely those
  $\Sigma$-algebras that satisfy $t = t'$ in the usual sense.

\item  Conversely, given a projective $\Sigma$-algebra $X$ and a surjective
  homomorphism $e\colon X \twoheadrightarrow E$, then for any set
  $X_0$ of generators of $X$ we have a split epimorphism
  $q\colon T_\Sigma X_0 \epito X$ using the projectivity of $X$.
  Consider the set of term equations $t = t'$ where $(t, t')$ ranges over
  the kernel of $e \o q\colon T_\Sigma X_0 \epito E$. Then a
  $\Sigma$-algebra $A$ satisfies all these equations iff it satisfies
  $e$ in the categorical sense.
\end{enumerate}
\end{expl}
Recall that the category $\D$ is \emph{$\E$-co-well-powered} if for every object $X$ of $\D$ the quotients with domain $X$ form a small set.
\begin{thm}[Banaschewski and Herrlich~\cite{Banaschewski1976}] Let $\D$ be a category with a proper factorization system $(\E,\M)$. Suppose that $\D$ is complete, $\E$-co-well-powered, and has enough projectives, i.e.~every object is a quotient of a projective one.
  Then, a full subcategory of $\D$ is a variety iff it can be presented by a class of equations. That is, it consists of precisely those objects satifying each of these equations.
\end{thm}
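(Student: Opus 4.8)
The plan is to prove both implications separately, the easy one first. For the ``only if'' direction, suppose $\V$ is a full subcategory of $\D$ presented by a class $\mathcal{Q}$ of equations, i.e.\ $\V$ consists of exactly those objects injective with respect to every $e\in\mathcal{Q}$. I would show directly that the class of objects injective with respect to a \emph{single} equation $e\colon X\epito E$ is closed under products, subobjects, and quotients; the general case follows since an intersection of such classes inherits all three closure properties. Closure under products is immediate: given $A_i$ injective w.r.t.\ $e$ and $g\colon X\to\prod A_i$, compose with each projection, lift to $h_i\colon E\to A_i$, and pair them up. Closure under subobjects uses the diagonal fill-in of the factorization system: if $m\colon A\monoto B$ with $B$ injective and $g\colon X\to A$, lift $m\o g$ to $h\colon E\to B$; since $e\in\E$ and $m\in\M$, the square with sides $g$, $e$, $h$, $m$ has a diagonal $E\to A$. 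Closure under quotients is where projectivity of $X$ is used: given $q\colon A\epito A'$ with $A$ injective and $g\colon X\to A'$, projectivity lifts $g$ to $\tilde g\colon X\to A$, inject to get $h\colon E\to A$, and $q\o h$ witnesses injectivity of $A'$.

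For the ``if'' direction, let $\V$ be a variety. The strategy is to produce, for each object $X$ of $\D$, a \emph{single} equation $e_X$ that captures membership in $\V$ as far as maps out of $X$ are concerned, and then take $\mathcal{Q}=\{e_X : X \text{ projective}\}$. Concretely, fix a projective object $X$. Consider the (small, by $\E$-co-well-poweredness) set of all quotients $q\colon X\epito Q$ such that $Q\in\V$, and form a ``largest'' such quotient $e_X\colon X\epito E_X$ as the joint factorization: take the $\M$-part of the canonical map $X\to\prod_q Q$ ranging over this set, so $E_X$ is a subobject of a product of members of $\V$, hence $E_X\in\V$ since $\V$ is closed under products and subobjects. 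By construction $e_X$ is the join of all $\V$-quotients of $X$, so every $\V$-quotient of $X$ factors through $e_X$.

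It remains to check that $\V$ is exactly the class of objects satisfying all the $e_X$. Every $A\in\V$ satisfies each $e_X$: given $g\colon X\to A$, factor $g=m\o q$ with $q\colon X\epito Q$ and $m\colon Q\monoto A$; then $Q\in\V$ (subobject of $A$), so $q$ factors through $e_X$ via some $t\colon E_X\to Q$, and $m\o t\colon E_X\to A$ is the required lift. Conversely, suppose $A$ satisfies every $e_X$. Using ``enough projectives'', pick a projective cover $p\colon P\epito A$; factor $p = m\o e$ through its image, but in fact $p$ is already epic so write $p\colon P\epito A$ directly and consider the equation $e_P\colon P\epito E_P$. Since $p$ itself is a $\V$-quotient of\ldots\ wait --- here is the delicate point: we do \emph{not} yet know $A\in\V$, so $p$ need not factor through $e_P$ a priori. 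Instead, use that $A$ satisfies $e_P$: the identity-like map is $p\colon P\to A$, which lifts to $h\colon E_P\to A$ with $h\o e_P = p$; since $p$ is epic, so is $h$. Thus $A$ is a quotient of $E_P\in\V$, and closure of $\V$ under quotients gives $A\in\V$.

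The main obstacle is precisely this last step: arranging that membership of $A$ in $\V$ can be \emph{detected} rather than \emph{assumed} when verifying the converse. The resolution is the interplay between ``enough projectives'' (to get an epi $P\epito A$ from a projective $P$, so that the universal equation $e_P$ applies to $A$ via this very map) and closure of $\V$ under quotients (to conclude from $A$ being a quotient of the $\V$-object $E_P$). Co-well-poweredness is needed only to ensure the product defining $E_X$ is over a small set so that $E_X$ exists in $\D$; completeness of $\D$ then guarantees the product and the factorization.
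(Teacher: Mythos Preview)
The paper does not give its own proof of this theorem; it is quoted from Banaschewski and Herrlich~\cite{Banaschewski1976}. Your argument is the standard one and is essentially correct, modulo two small points. First, your direction labels are swapped: since the statement is ``variety iff presentable by equations'', the implication ``presented by equations $\Rightarrow$ variety'' is the \emph{if} direction, not the only-if. Second, and more substantively, at the very end you write ``since $p$ is epic, so is $h$. Thus $A$ is a quotient of $E_P$''. In a proper factorization system, ``quotient'' means $\E$-morphism, and $h$ epic does not by itself give $h\in\E$. The fix is easy: factor $h = m'\o e'$ with $e'\in\E$ and $m'\in\M$; then $p = m'\o(e'\o e_P)$ is an $(\E,\M)$-factorization of $p\in\E$, so by essential uniqueness of factorizations $m'$ is an isomorphism, whence $h\in\E$ and $A$ is a genuine $\E$-quotient of $E_P\in\V$.

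For comparison, the paper's proof of the analogous pseudovariety statement (\Cref{prop:pseudovariety=pseudoequations}) follows the same overall template but keeps the entire semilattice $\rho_X$ of $\V$-quotients of $X$ as a pseudoequation rather than forming their join $e_X$; in the variety setting, completeness and $\E$-co-well-poweredness let you take the full (possibly infinite) join, which is exactly what you do.
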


Note that the category of $\Sigma$-algebras satisfies all conditions of the theorem. Thus, in view of \Cref{ex:eqsigmaalg}, Banaschewski and Herrlich's result subsumes Birkhoff's HSP theorem~\cite{Birkhoff35}. In the following, we are going to move from varieties in $\D$ to pseudovarieties in $\Df$.

\begin{defn}
  A \emph{pseudovariety} is a full subcategory of $\Df$ closed under finite
  products, subobjects, and quotients.
\end{defn}

%
\begin{remark}
  Quotients of an object $X$ are ordered by factorization: 
  given $\E$-quotients $e_1, e_2$, we put $e_1 \leq e_2$ if $e_1$
  factorizes through $e_2$
  \[
    \xymatrix@C-1.5em{
      & X \ar@{->>}[ld]_{e_1} \ar@{->>}[rd]^{e_2} \\
      E_1 & & E_2 \ar@{-->}[ll]
    }
  \]
  Every pair of quotients $e_i\colon X \epito E_i$ has a least upper
  bound, or \emph{join}, $e_1\vee e_2$ obtained by
  $(\E,\M)$-factorizing the mediating morphism
  $\<e_1, e_2\>\colon X \to E_1 \times E_2$ as follows:
  \begin{equation} \label{eq:join-quotient}
    \begin{gathered}
      \xymatrix@C+1em{
      X \ar@{->>}[d]_{e_i} \ar@{->>}[r]^{e_1 \vee e_2} \ar[rd]|{\<e_1, e_2\>} & F \ar@{ >->}[d] \\
        E_i & E_1 \times E_2 \ar[l]^-{\pi_i}.
      }
    \end{gathered}
  \end{equation}
  A nonempty collection of quotients closed under joins is called a
  \emph{semilattice of quotients}.
\end{remark}

\begin{defn}
  A \emph{pseudoequation} is a semilattice $\rho_X$ of quotients of a
  projective object $X$ (of ``variables''). A finite object $A$ of
  $\D$ \emph{satisfies} $\rho_X$ if $A$ is cone-injective
  w.r.t. $\rho_X$, that is,
  for every morphism $h\colon X \to A$, there exists a member $e\colon X
  \twoheadrightarrow E$ of $\rho_X$ through which $h$ factorizes:
  \[
    \xymatrix@C-1.5em{
      & X \ar@{->>}[ld]_{\exists e} \ar[rd]^{\forall h} \\
      E \ar@{-->}[rr]^-{\exists} && A 
    }
  \]
\end{defn}

%
%


\begin{prop}\label{prop:pseudovariety=pseudoequations}
  A collection of finite objects of $\D$ forms a pseudovariety iff it can be presented by pseudoequations, i.e.~it consists of precisely those finite objects that satisfy each of the given pseudoequations.
\end{prop}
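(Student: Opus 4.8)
The plan is to prove both implications by elementary diagram chases, the only real care being to use the proper factorization system $(\E,\M)$ correctly.

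\emph{Pseudoequations $\Rightarrow$ pseudovariety.} Suppose $\V$ is the collection of finite objects satisfying a given family $(\rho_X)_X$ of pseudoequations; I would check the three closure properties straight from cone-injectivity. For \emph{quotients}: given $q\colon A\epito B$ with $A\in\V$ and $h\colon X\to B$, projectivity of $X$ yields a lift $\tilde h\colon X\to A$ of $h$, and if $\tilde h=g\o e$ with $e\in\rho_X$ then $h=(q\o g)\o e$. For \emph{finite products}: given $h\colon X\to A_1\times A_2$ with $A_1,A_2\in\V$, each $\pi_i\o h$ factors through some $e_i\in\rho_X$ via $g_i$, and then $h$ factors through the join $e_1\vee e_2\in\rho_X$ (using the construction~\eqref{eq:join-quotient}) via $g_1\times g_2$ composed with the mono $F\monoto E_1\times E_2$; moreover $A_1\times A_2\in\Df$ since $\Df$ is closed under finite limits, and the empty product, namely the terminal object, lies in $\Df$ and trivially satisfies every pseudoequation. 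For \emph{subobjects}, the only place needing a short argument: given $m\colon B\monoto A$ with $A\in\V$ and $h\colon X\to B$, factor $m\o h$ through some $e\colon X\epito E$ in $\rho_X$ via $g\colon E\to A$ and form the pullback $P$ of $m$ along $g$, with projections $p_E\colon P\to E$ and $p_B\colon P\to B$; then $p_E\in\M$ (right classes of a factorization system are pullback-stable) and $p_E$ is split epi (orthogonality of the $\E$-morphism $e$ against $p_E\in\M$, using the comparison map $X\to P$ sitting over $e$ and $h$), hence $p_E$ is an isomorphism, and $p_B\o p_E^{-1}\colon E\to B$ witnesses that $h$ factors through $e$.

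\emph{Pseudovariety $\Rightarrow$ pseudoequations.} Given a pseudovariety $\V\seq\Df$, for each projective object $X$ of $\D$ put
\[
  \rho_X\defeq\set{e\colon X\epito E\text{ a quotient}}{E\in\V}.
\]
I would first verify $\rho_X$ is a pseudoequation. It is closed under joins: for $e_1,e_2\in\rho_X$ the codomain $F$ of $e_1\vee e_2$ is a subobject of $E_1\times E_2$, and $E_1\times E_2\in\V$ (finite products) with $\V$ closed under subobjects, so $F\in\V$ and $e_1\vee e_2\in\rho_X$. It is nonempty: factoring the unique morphism $X\to 1$ as $X\epito E_0\monoto 1$, the object $E_0$ is a subobject of the terminal object $1$, which lies in $\V$ (the empty product) and is finite, so $E_0\in\V$ and the quotient $X\epito E_0$ lies in $\rho_X$. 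Next I claim $\V$ is exactly the class of finite objects satisfying all of the $\rho_X$. ``$\seq$'': for $A\in\V$ and $h\colon X\to A$, the $(\E,\M)$-factorization $h=m\o e$ with $e\colon X\epito E$ has $E$ a finite subobject of $A\in\V$, so $E\in\V$, $e\in\rho_X$, and $h$ factors through $e$. ``$\supseteq$'': let $A$ be finite and satisfy all $\rho_X$; by the second condition of \Cref{assum:factorization-system} pick a quotient $q\colon P\epito A$ with $P$ projective, and apply cone-injectivity of $A$ with respect to $\rho_P$ to $h\defeq q$, obtaining $e\colon P\epito E$ in $\rho_P$ and $g\colon E\to A$ with $q=g\o e$; a short computation with the factorization system (factor $g=m\o g'$ with $g'\in\E$, $m\in\M$, then compare the two $(\E,\M)$-factorizations $q=\id_A\o q$ and $q=m\o(g'\o e)$ to see that $m$ is a split epimorphism in $\M$, hence an isomorphism) gives $g\in\E$, so $A$ is a quotient of $E\in\V$ and thus $A\in\V$.

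\emph{Main obstacle.} Since $\Df$ is small and all the pieces are available, there is no deep difficulty; the subtle point is to invoke the factorization-system axioms only where they genuinely apply to a \emph{general} proper system (not just to regular epimorphisms). The two non-formal ingredients are: (i) in the first implication, closure under subobjects, resting on pullback-stability of $\M$ and the fact that a split epimorphism lying in $\M$ is invertible; and (ii) in the second implication, the left-cancellation fact ``$g\o e\in\E$ and $e\in\E$ imply $g\in\E$''. I would record these as two one-line lemmas and let the remaining diagram chases proceed routinely.
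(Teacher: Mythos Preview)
Your proof is correct and follows essentially the same two-part structure as the paper's, with the same construction of $\rho_X$ in the converse direction. The one place where you take a longer route is closure under subobjects: you form a pullback and argue that its projection is a split-epic $\M$-map, whereas the paper simply applies the diagonal fill-in property directly to the square $g\o e = m\o h$ (with $e\in\E$, $m\in\M$) to get a map $E\to B$ factoring $h$ in one step; your pullback detour is valid but unnecessary. On the other hand, you explicitly check nonemptiness of $\rho_X$ and spell out the cancellation ``$g\o e\in\E$, $e\in\E$ $\Rightarrow$ $g\in\E$'', both of which the paper leaves implicit.
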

\begin{proof}
  \begin{enumerate}
  \item\label{itm:prop:pseudovariety=pseudoequations-1} We first
    prove the \emph{if} direction. Since the intersection of a family
    of pseudovarieties is a pseudovariety, it suffices to prove that
    for every pseudoequation $\rho_X$ over a projective object $X$,
    the class $\mathscr{V}$ of all finite objects satisfying $\rho_X$
    forms a pseudovariety, i.e.~is closed under finite products,
    subobjects, and quotients.
    \begin{enumerate}
    \item \emph{Finite products.} Let $A, B\in \mathscr{V}$. Since $A$
      and $B$ satisfy $\rho_X$, for every morphism
      $\<h, k\>\colon X \to A \times B$ there exists
      $e\colon X\epito E$ in $\rho_X$ such that both $h\colon X\to A$
      and $k\colon X\to B$ factorize through $e$ -- this follows from
      the closedness of pseudoequations under binary joins.  Given
      $h = e \o h'$ and $k = e \o k'$, then
      $\<h', k'\>\colon X \to E_i$ is the desired factorization:
      \[
        \<h, k\> = e \o \<h', k'\>.
      \]
      Thus $A\times B\in \mathscr{V}$. Since the terminal object $1$ clearly
      satisfies every pseudoequation, we also have $1\in \mathscr{V}$.

    \item \emph{Subobjects.} Let $m \colon A \monoto B$ be a morphism
      in $\M_\f$ with $B\in \mathscr{V}$.  Then for every morphism
      $h\colon X \to A$ we know that $m \o h$ factorizes as $e \o k$
      for some $e\colon X\epito E$ in $\rho_X$ and some
      $k \colon E \to B$. The diagonal fill-in property then shows
      that $h$ factorizes through $e$:
      \[
        \xymatrix{
          X \ar[d]_{h} \ar@{->>}[r]^{e} & E \ar[d]^{k} \ar@{-->}[ld] \\
          A \ar@{ >->}[r]_m & B
        }
      \]
      Thus, $A\in \mathscr{V}$.
      
    \item \emph{Quotients.} Let $q\colon B \epito A$ be a morphism in
      $\E_\f$ with $B \in \mathscr{V}$. Every morphism
      $h\colon X \to A$ factorizes, since $X$ is projective, as
      \[
        h = q\o k
        \quad\text{for some $k\colon X \to B$}
      \]
      Since $k$ factorizes through some $e\in \rho_X$, so does
      $h$. Thus, $A\in\mathscr{V}$.
    \end{enumerate}
    
  \item For the ``only if'' direction, suppose that $\mathscr{V}$ is a
    pseudovariety. For every projective object $X$ we form the
    pseudoequation $\rho_X$ consisting of all quotients
    $e\colon X \twoheadrightarrow E$ with $E \in \mathscr{V}$. This is
    indeed a semilattice: given $e,f\in \rho_X$ we have
    $e\vee f\in \rho_X$ by \eqref{eq:join-quotient}, using that
    $\mathscr{V}$ is closed under finite products and subobjects. We
    claim that $\mathscr{V}$ is presented by the collection of all the
    above pseudoequations $\rho_X$.
    \begin{enumerate}
    \item Every object $A \in \mathscr{V}$ satisfies all
      $\rho_X$. Indeed, given a morphism $h\colon X \to A$, factorize
      it as $e \colon X \twoheadrightarrow E$ in $\E$ followed by
      $m \colon E \rightarrowtail A$ in $\M$. Then $E \in \mathscr{V}$
      because $\mathscr{V}$ is closed under subobjects, so $e$ is a
      member of $\rho_X$. Therefore $h=m\o e$ is the desired
      factorization of $h$, proving that $A$ satisfies $\rho_X$.
          
    \item Every finite object $A$ satisfying all the pseudoequations
      $\rho_X$ lies in $\mathscr{V}$. Indeed, by
      \Cref{assum:factorization-system} there exists a quotient
      $q\colon X \twoheadrightarrow A$ for some projective object
      $X$. Since $A$ satisfies $\rho_X$, there exists a factorization
      $q = h \o e$ for some $e\colon X\epito E$ in $\rho_X$ and some
      $h \colon E \to A$. We know that $E \in \mathscr{V}$, and from
      $q \in \E$ we deduce $h \in \E$. Thus $A$, being a quotient of
      an object of $\mathscr{V}$, lies in $\mathscr{V}$.\qedhere
    \end{enumerate}
  \end{enumerate}
\end{proof}

\begin{remark}\label{re:projective-objects}
  \begin{enumerate}
    \item \Cref{prop:pseudovariety=pseudoequations} would remain valid if we defined pseudoequations
      as semilattices of \emph{finite} quotients of a projective object. 
      This follows immediately from the above proof. 

    \item Let us assume that a collection $\mathsf{Var}$ of projective
      objects of $\D$ is given such that every finite object is a
      quotient of an object of $\mathsf{Var}$
      (cf.~\Cref{assum:factorization-system}\ref{A:fs:2}). Then we could
        define pseudoequations as semilattices of quotients of members of
        $\mathsf{Var}$ with finite codomains.  Again, from the above
        proof we see that \Cref{prop:pseudovariety=pseudoequations}
        would remain true.
  \end{enumerate}
\end{remark}
We would like to reduce pseudoequations to equations in the sense of
Banaschewski and Herrlich. For that we need to move from the category
$\D$ to the pro-completion of~$\Df$.
\begin{notation}\label{not:v}
  Since $\D$ has (cofiltered) limits and $\Pro\Df$ is the free completion of $\Df$ under cofiltered limits, the embedding $\Df\monoto \D$
  extends to an essentially unique cofinitary functor
  \[
    V\colon \Pro\Df \to \D.
  \]
\end{notation}

\begin{expl}
  If $\D$ is a full subcategory of $\SigStr$ closed under cofiltered limits, we have seen that $\Pro{\Df}$ can be described as a full subcategory of $\Stone{\D}$ by \Cref{coro:pro-Df}.
  The above functor 
  \[
    V\colon \Pro\Df \to \D 
  \]
  is the functor forgetting the topology.  Indeed, the corresponding
  forgetful functor from $\Stone(\SigStr)$ to $\SigStr$ is cofinitary,
  hence, so is $V$.
\end{expl}

\begin{remark}\label{rem:kanext0}
  Recall, e.g.~from Mac Lane~\cite{maclane}, that the \emph{right Kan
    extension} of a functor $F\colon \A \to \Cat$
  along~$K\colon \A \to \B$ is a
  functor~$R=\Ran_{K} F\colon \B \to \Cat$ with a universal natural
  transformation $\epsilon\colon RK \to F$, that is, for every functor
  $G\colon \B \to \Cat$ and every natural
  transformation~$\gamma\colon GK \to F$ there exists a unique natural
  transformation $\gamma^\dagger\colon G \to R$ with
  $\gamma = \epsilon \o \gamma^\dagger K$. If $\A$ is small and $\Cat$
  is complete, then the right Kan extension exists
  \cite[Theorem X.3.1, X.4.1]{maclane}, and the object $RB$ ($B\in \B$) can be constructed as
  the limit
  \[
    RB = \lim(B/K \xto{Q_B} \A \xto{F} \Cat),
  \]
  where $B/K$ denotes the slice category of all morphisms
  $f\colon B\to KA$ ($A\in \A)$ and $Q_B$ is the projection functor
  $f\mapsto A$. Equivalently, $RB$ is given by the end
  \[
    RB =  \int_{A\in \A} \B(B,KA) \pitchfork FA,
  \]
  with $S\pitchfork C$ denoting $S$-fold power of $C\in \Cat$.
\end{remark}
\begin{lem}\label{lem:the-left-adjoint-to-V}
  The functor $V$ has a left adjoint
  \[
    \widehat{(\dash)}=\Ran_J E\colon \D \to \Pro\Df
  \]
given by the right Kan extension of the embedding $E\colon \Df\monoto \Pro{\Df}$ along the embedding $J\colon \Df\monoto \D$ and making the following triangle commute up to isomorphism:
\[  
\xymatrix{
\Df \ar@{>->}[rr]^J \ar@{>->}[dr]_E && \D \ar[dl]^{\widehat{(\dash)}} \\
& \Pro{\Df}  &
}
\]
\end{lem}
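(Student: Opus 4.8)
The plan is to exhibit the left adjoint concretely via the pointwise formula for $\Ran_J E$, and then verify the triangular identity by computing the relevant hom-sets, reducing everything to the case of targets in $\Df$. The first step is to observe that for every $D\in\D$ the slice category $D/J$ of all morphisms $f\colon D\to JA$ with $A\in\Df$ is \emph{cofiltered}: it is nonempty since $J$ preserves the terminal object (as $\Df$ is closed under finite limits in $\D$ by \Cref{assum:factorization-system}); $(A_1\times A_2,\langle f_1,f_2\rangle)$ is a lower bound of $(A_1,f_1)$ and $(A_2,f_2)$; and for a parallel pair $b_1,b_2\colon(A_1,f_1)\to(A_2,f_2)$, the equalizer $e\colon A_e\monoto A_1$ of $b_1,b_2$ in $\Df$ has the property that $f_1$ factors through $Je$, since $J$ preserves equalizers. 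Hence
\[
  \widehat D\;:=\;\lim\bigl(D/J\xrightarrow{Q_D}\Df\xrightarrow{E}\Pro\Df\bigr)
\]
exists in $\Pro\Df$ as a \emph{cofiltered} limit, with projections $\pi_{(A,f)}\colon\widehat D\to EA$, and by \Cref{rem:kanext0} it is functorial in $D$ and equals $\Ran_J E$ at $D$. Since $J$ is fully faithful, $\id_{JX}$ is initial in $JX/J$ for $X\in\Df$, so $\widehat{JX}\cong EX$ and the triangle in the statement commutes up to isomorphism.

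Next I construct the unit. Because $D/J$ is cofiltered and $V$ is cofinitary with $VE\cong J$, the functor $V$ preserves the limit above, so $V\widehat D\cong\lim_{(A,f)}JA$ with cone $V\pi_{(A,f)}$. The morphisms $f\colon D\to JA$ constituting the objects of $D/J$ themselves form a cone over $JQ_D$ — this is exactly the compatibility condition in the definition of morphisms of $D/J$ — and thus induce a unique $\eta_D\colon D\to V\widehat D$ with $V\pi_{(A,f)}\circ\eta_D=f$ for all $(A,f)$; naturality of $\eta$ in $D$ is routine.

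It remains to show that $\phi_P\colon\Pro\Df(\widehat D,P)\to\D(D,VP)$, $g\mapsto Vg\circ\eta_D$, is a bijection, natural in $D$ and $P$. For $P=EA$ with $A\in\Df$: surjectivity holds since $\phi_{EA}(\pi_{(A,h)})=h$ by the defining property of $\eta_D$. For injectivity, $EA$ is finitely copresentable in $\Pro\Df$, so given $g_1,g_2\colon\widehat D\to EA$ one may, using the cofilteredness of $D/J$ and the fullness of $E$, factor both through a common projection, $g_i=Eu_i\circ\pi_{(A_0,f_0)}$ with $u_i\colon A_0\to A$; then $\phi_{EA}(g_1)=\phi_{EA}(g_2)$ reads $Ju_1\circ f_0=Ju_2\circ f_0$, so $f_0$ factors through $Je$ for the equalizer $e\colon A_e\monoto A_0$ of $u_1,u_2$ in $\Df$, whence $g_1=E(u_1e)\circ\pi_{(A_e,f_e)}=E(u_2e)\circ\pi_{(A_e,f_e)}=g_2$ since $u_1e=u_2e$. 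For general $P$, write $P=\lim_i EB_i$ as a cofiltered limit of objects of $\Df$ with cone $q_i$; then $\Pro\Df(\widehat D,P)\cong\lim_i\Pro\Df(\widehat D,EB_i)$, and since $V$ is cofinitary with $VE\cong J$ also $\D(D,VP)\cong\lim_i\D(D,JB_i)$, and under these isomorphisms $\phi_P$ is carried to $\lim_i\phi_{EB_i}$, hence is bijective. (Concretely: given $f\colon D\to VP$, the family $\pi_{(B_i,\,Vq_i\circ f)}\colon\widehat D\to EB_i$ is a cone over $(EB_i)_i$, because each connecting morphism $B_i\to B_j$ of the diagram is a morphism $(B_i,Vq_i\circ f)\to(B_j,Vq_j\circ f)$ of $D/J$, and the induced map $\widehat D\to P$ is $\phi_P^{-1}(f)$; uniqueness follows by postcomposition with the $q_i$.) This proves $\widehat{(\dash)}\dashv V$.

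The conceptual crux is the observation that $D/J$ is cofiltered, since it hinges entirely on the closure of $\Df$ under finite limits and, once available, makes the limit defining $\widehat D$ cofiltered so that $V$ preserves it and all remaining steps become formal diagram chases; the most calculation-heavy part is the bookkeeping in the general-$P$ case above, verifying that the projections $\pi_{(B_i,\,Vq_i\circ f)}$ genuinely assemble into a cone over the defining diagram of $P$.
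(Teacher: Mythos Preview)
Your proof is correct and takes a genuinely different route from the paper's. The paper argues by end/coend calculus: it computes $\D(D,VH)$ via the end formula for $V=\Ran_E J$, reduces by Yoneda to $[\Df,\Set](H,\D(D,J-))$, and then identifies this with $\Pro\Df(\D(D,J-),H)$, concluding that $D\mapsto\D(D,J-)$ is the left adjoint; the identification with $\Ran_J E$ is then made via the canonical-colimit-of-representables formula. Your argument, by contrast, works directly with the pointwise limit formula $\widehat D=\lim(D/J\to\Df\to\Pro\Df)$: you first prove that $D/J$ is cofiltered (using closure of $\Df$ under finite limits), construct the unit by hand, and verify the hom-set bijection first for $P\in\Df$ using finite copresentability, then for general $P$ by reducing along a cofiltered presentation.

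What each approach buys: the paper's calculation is shorter and requires no case analysis, but it tacitly assumes that $\D(D,J-)$ lands in $\Pro\Df\subseteq[\Df,\Set]^\op$ --- which is exactly the statement that $D/J$ is cofiltered, a point you prove explicitly but the paper does not. Your argument is more elementary (no ends), and it makes transparent precisely where the hypothesis that $\Df$ is closed under finite limits enters. It is also essentially the content of the paper's subsequent Construction~\ref{constr:the-left-adjoint-to-V}, promoted from an \emph{a posteriori} description to the actual proof of the adjunction.
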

\begin{proof}
Recall that, up to equivalence, $\Pro\Df$ is the full subcategory
  of $[\Df, \Set]^\op$ on cofiltered limits of representables with
  $ED = \Df(D, -)$ for every $D\in \Df$ (see \Cref{rem:profinitecompletion}), and the functor $V$ is given by \[ V= \Ran_E J\colon \Pro{\Df}\to \D.\] Consider the following
  chain of isomorphisms natural in $D\in \D$ and $H\in \Pro{\Df}$:
  \begin{align*}
    \D(D, VH) & \cong \D(D, \left(\Ran_{E} J\right) H) \\
    & \cong \D\left(D, \int_{X} \Pro\Df(H, E X) \pitchfork JX\right) 
    && \text{by the end formula for $\Ran$,}\\
    & = \D\left(D, \int_{X} [\Df, \Set](E X, H) \pitchfork JX \right) 
    && \text{$\Pro\Df$ full subcategory of $[\Df, \Set]^\op$,}\\
    & \cong \D\left(D, \int_{X} HX \pitchfork JX\right) 
    && \text{by the Yoneda lemma,}\\
    & \cong \int_{X} \D(D, HX \pitchfork JX)
    && \text{$\D(D, -)$ preserves ends,}\\
    & \cong \int_{X} \Set(HX, \D(D, JX)) 
    && \text{by the universal property of power,}\\
    & \cong [\Df, \Set](H, \D(D, J-)) 
    && \text{the set of natural transf. as an end,}\\
    & = \Pro\Df(\D(D, J-), H)
    && \text{$\Pro\Df$ full subcategory of $[\Df, \Set]^\op$.}
    \end{align*}
    Hence, the functor $\widehat{(\dash)}\colon D \mapsto \D(D,J-)$ is a
    left adjoint to $V$. Moreover, $\widehat{(\dash)}$ extends $E$: for each $D \in \Df$, we have
    \[
      \widehat D = \D(JD,J-) = \Df(D, -) = ED,
    \]
    and similarly on morphisms, since $J$ is a full inclusion. It remains to verify that the functor $\widehat{(\dash)}$ coincides with $\Ran_J E$. This follows from
    the fact that every presheaf is a canonical colimit of
    representables expressed as a coend in $[\Df, \Set]$:
    \[
      \D(D, J-) \cong \int^{X} \D(D, JX) \bullet EX,
    \]
with $\bullet$ denoting copowers. This corresponds to an end in $[\Df, \Set]^\op$:
    \[
      \int_X \D(D, JX) \pitchfork E X = (\Ran_J E) D.
    \]
Thus $\widehat{(\dash)}=\Ran_J E$, as claimed.
\end{proof}

\begin{construction}\label{constr:the-left-adjoint-to-V}
  By expressing the right Kan extension
\[ \widehat{(\dash)}=\Ran_J E\colon \D\to\Pro{\Df} \] as a limit, the action $D\to \widehat{D}$ on objects, $f\mapsto \widehat{f}$ on morphisms, the unit, and the counit of the adjunction $\widehat{(\dash)}\dashv V$ are given as follows.
\begin{enumerate}
\item\label{C:laV:1} For every object $D$ of $\D$, the object $\widehat{D}\in \Pro{\Df}$ is a limit of the diagram 
  \[
    P_D \colon D/{\Df} \to \Pro\Df, \quad P_D(D \xto{a} A) = A.
  \]
We use the following notation for the limit cone of $P_D$:
  \[
    \frac{D \xto{a} A}{\widehat{D} \xto{\widehat{a}} A}
  \]
  where $(A, a)$ ranges over $D/{\Df}$. For finite $D$ we choose the
  trivial limit: $\widehat{D} = D$ and $\widehat{a} = a$.
  
\item Given $f\colon D \to D'$ in $\D$, the morphisms
  $\widehat{a\o f}$ with $a$ ranging over $D'/\Df$ form a cone over
  $P_{D'}$. Define $\widehat{f}\colon \widehat{D}\to\widehat{D'}$ to
  be the unique morphism such that the following triangles commute for
  all $a \colon D' \to A$ with $A \in \Df$:
  \[
    \xymatrix@C-1.5em{
      \widehat{D} \ar[rr]^{\widehat{f}} \ar[rd]_{\widehat{a\cdot f}} & &
      \widehat{D'} \ar[ld]^{\widehat{a}} \\
                  & A
    }
  \]
  Note that overloading the notation $\widehat{(-)}$ causes no problem
  because if $\widehat{D'} = D' \in \Df$ then $\widehat{f}$ is
  a projection of the limit cone for $P_{D}$ (see
  item~\ref{C:laV:1}), since for $a = \id_{D'}$ we have
  $\widehat{a} = \id_{D'}$.
\item The unit $\eta$ at $D\in \D$ is given by the unique
  morphism \[\eta_D\colon D \to V \widehat{D}\] in $\D$ such that the
  following triangles commute for all $h \colon D \to A$ with
  $A \in \Df$:
  \[
    \xymatrix{
      D \ar[r]^{\eta_D} \ar[dr]_h & V\widehat{D} \ar[d]^{V\widehat{h}} \\
      &  A 
    }
  \]
  Here one uses that $V$ is cofinitary, and thus the morphisms
  $V\widehat h$ form a limit cone in $\D$.
  
\item\label{C:laV:4} The counit $\epsilon$ at $D\in \Pro{\Df}$ is the unique
  morphism \[\epsilon_D\colon \widehat{VD} \to D\] in $\hatD$ such
  that the following triangles commute, where $a\colon D \to A$ ranges
  over the slice category $D/{\Df}$:
  \[
    \xymatrix@C-1.5em{
      \widehat{VD} \ar[rr]^{\epsilon_D} \ar[rd]_{\widehat{Va}}
      & &
      D \ar[ld]^{a}
      \\
      & A
    }
  \]
\end{enumerate}
\end{construction}

\begin{notation}
 Recall that $\E_\f$ and $\M_\f$ are the morphisms of $\Df$ in $\E$ and $\M$, respectively. We denote by
  \[
    \widehat{\E} \qquad\text{and}\qquad\widehat{\M}
  \]
  the collection of all morphisms of $\Pro{\Df}$ that are cofiltered limits of members of $\E_\f$ or $\M_\f$ in the arrow category
  $(\Pro\Df)^\rightarrow$, respectively. 
\end{notation}

\begin{remark} \label{ex:profinite}
  \begin{enumerate}
\item Let us recall that a functor $P:\J\to \J'$ is \emph{final} if
\begin{enumerate}[label=(\alph*)]
\item for every object $J'$ of $\J'$ a morphism from $J'$ into $PJ$ exists for some $J\in \J$;
\item given two morphisms $f_i\colon J'\to PJ_i$ ($i=1,2$) there exist morphisms $g_i\colon J_i\to J$ in $\J$ with $Pg_1\o f_1 = Pg_2\o f_2$. 
\end{enumerate}
Finality of $P$ implies that for every diagram $D\colon \J'\to \K$ one has $\colim D = \colim D\o P$ whenever one of the colimits exists. The dual concept is that of an \emph{initial functor} $P\colon \J\to \J'$. 
    \item\label{itm:ex:profinite-1} For every finite $\E$-quotient $e\colon X \epito E$ in $\D$, the corresponding limit projection $\widehat{e}\colon \widehat{X} \epito E$ lies in $\widehat{\E}$.
      Indeed, since $E$ is finitely copresentable in $\Pro{\Df}$, the morphism $\widehat e$ factorizes through $\widehat{h}$ for some $h$ in $X/\Df$, which can be assumed to be a quotient in $\D$.
      Otherwise, take the $(\E,\M)$-factorization $h=m\o q$ of $h$ and replace $h$ by~$q$.

      Thus, we obtain an initial subdiagram
      $P'_X\colon \mathscr{I}\to \Pro{\Df}$ of
      $P_X\colon X/\Df\to\Pro{\Df}$ by restricting $P_X$ to the full
      subcategory of finite quotients $h\colon X\epito A$ in $X/\Df$
      through which $e$ factorizes, i.e.~where $e=e_h\o h$ for some
      $e_h\colon A\to E$.  Note that $e_h\in \E$ because $e,h\in \E$.
      The quotients $e_h$ ($h\in \mathscr{I}$) form a cofiltered
      diagram in $(\Pro{\Df})^\to$ with limit cone $(\widehat h,\id_E)$:
      \[  
        \xymatrix{
        \widehat X \ar[d]_{\widehat h} \ar[r]^{\widehat e} & E \ar[d]^{\id_e} \\
        A \ar@{->>}[r]_{e_h} & E 
        }
      \]
Thus, $\widehat e\in \hatE$.

%
%
%

\item\label{itm:ex:profinite-2} For every cofiltered diagram
  $B\colon I \to \Df$ with connecting morphisms in $\E$, the limit
  projections in $\Pro\Df$ lie in $\widehat{\E}$. Indeed, let
  $b_i\colon X \to B_i$ ($i \in I)$ denote the limit cone. Given
  $j \in I$, we are to show $b_j \in \widehat{\E}$. Form the diagram
  in $\Df^{\to}$ whose objects are all connecting morphisms of $B$
  with codomain $B_j$ and whose morphisms from $h\colon B_i \epito B_j$
  to $h'\colon B_{i'} \epito B_j$ are all connecting maps
  $k\colon B_i \epito B_{i'}$ of $B$.  This is a cofiltered diagram in
  $\Pro{\Df}$ with limit $b_j$ and the following limit cone:
  \[
    \xymatrix{
      X \ar[d]_{b_i} \ar[r]^{b_j} & B_j \ar[d]^{\id} \\
      B_i \ar@{->>}[r]_{h} & B_j
    }
  \]
  Since each $h$ lies in $\E_\f$, this proves $b_j \in \widehat{\E}$. 

\item\label{itm:ex:profinite-3} For every cone $p_i\colon P \to B_i$
  of the diagram $B$ in~\ref{itm:ex:profinite-2} with
  $p_i \in \widehat{\E}$ for all $i \in I$, the unique factorization
  $p\colon P \to X$ through the limit of $B$ lies in
  $\widehat{\E}$. Indeed, $p$ is the limit of $p_i$, $i \in I$, with
  the following limit cone:
  \[
    \xymatrix{
      P \ar[d]_{\id} \ar[r]^{p} & X \ar[d]^{b_i} \\
      P \ar[r]_{p_i} & B_i 
    }
  \]
\end{enumerate}
\end{remark}

\begin{prop}\label{prop:factorization-of-proD}
  The pair $(\widehat{\E}, \widehat{\M})$ is a proper factorization
  system of $\Pro\Df$.
\end{prop}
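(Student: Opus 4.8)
The plan is to verify the defining properties of a proper factorization system for $(\hatE,\hatM)$, reducing each to the corresponding property of $\Df$. First note that $\Df$, being closed under subobjects in $\D$ (\Cref{assum:factorization-system}), inherits from $(\E,\M)$ a proper factorization system $(\E_\f,\M_\f)$: an $(\E,\M)$-factorization $X\epito Z\monoto Y$ of a morphism of $\Df$ has $Z$ a subobject of the finite object $Y$, so it lives in $\Df$, and $\E_\f\perp\M_\f$ is inherited from $\E\perp\M$; as usual this factorization system comes with a functorial factorization. Throughout I will use two standard facts: limits in the arrow category $(\Pro\Df)^{\to}$ are computed componentwise (\Cref{rem:arrowcat}); and every morphism of $\Pro\Df$ is, as an object of $(\Pro\Df)^{\to}$, a cofiltered limit of morphisms of $\Df$ (a \emph{level representation}), which holds because the objects of $\Df$ are finitely copresentable in $\Pro\Df$. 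I will also use that $\M_\f$-morphisms remain monic in $\Pro\Df$, since the pro-completion embedding $E$ preserves finite limits.

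\emph{Factorizations exist.} Given $f$ in $\Pro\Df$, choose a level representation $f=\lim_i f_i$ with each $f_i$ in $\Df$, apply the functorial $(\E_\f,\M_\f)$-factorization to obtain cofiltered diagrams $(e_i)$ in $\E_\f$ and $(m_i)$ in $\M_\f$ with $f_i=m_i\o e_i$, and put $e=\lim_i e_i$, $m=\lim_i m_i$. Since limits in $(\Pro\Df)^{\to}$ are componentwise they preserve composition of arrows, so $f=m\o e$, and by definition $e\in\hatE$ and $m\in\hatM$. For properness on the $\M$-side, observe that a limit of monomorphisms taken in the arrow category is again a monomorphism: if $m=\lim_\ell m_\ell$ and $m\o u=m\o v$, then postcomposing with each codomain projection shows that $u$ and $v$ agree after every domain projection, hence $u=v$. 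As $\M_\f\subseteq$ monos of $\Pro\Df$, this yields $\hatM\subseteq$ monos.

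\emph{Orthogonality $\hatE\perp\hatM$.} For a fixed $e\in\hatE$, the class $\{\,g\mid e\perp g\,\}$ is closed under limits in $(\Pro\Df)^{\to}$: a commuting square from $e$ into a limit $\lim_\ell g_\ell$ induces such squares into each $g_\ell$, the resulting diagonals are unique and therefore form a cone, and the induced morphism is the required unique diagonal. Since $\hatM$ is generated from $\M_\f$ under cofiltered limits in the arrow category, it suffices to prove $\hatE\perp\M_\f$. So let $e=\lim_i e_i$ with $e_i\colon X_i\to Y_i$ in $\E_\f$ and limit cone $(p_i\colon X\to X_i,\;q_i\colon Y\to Y_i)$, let $m\colon C\to D$ in $\M_\f$, and consider a commuting square with legs $u\colon X\to C$ and $v\colon Y\to D$. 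Finite copresentability of $C$ and $D$ lets us push $u$ and $v$ down to $u_i\colon X_i\to C$ and $v_i\colon Y_i\to D$ for a common index $i$ (so $u=u_i\o p_i$, $v=v_i\o q_i$); comparing the two sides of the square at level $i$ and invoking essential uniqueness (\Cref{rem:cofiltered}) produces a connecting morphism $X_\gamma\colon X_{i'}\to X_i$ along which, using the compatibility $e_i\o X_\gamma=Y_\gamma\o e_{i'}$, the square already commutes in $\Df$. Now $\E_\f\perp\M_\f$ supplies a diagonal there, and transporting it back along $q_{i'}$ gives the desired diagonal $d\colon Y\to C$ in $\Pro\Df$; its uniqueness follows since $m$ is monic. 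Hence $\hatE\perp\hatM$.

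\emph{Properness on the $\E$-side, and conclusion.} The essential point is that every $e=\lim_i e_i\in\hatE$ is epic, which is not automatic since limits of epimorphisms need not be epic. Write $e_i\colon X_i\to Y_i$ in $\E_\f$, limit cone $(p_i,q_i)$, with connecting morphisms satisfying $Y_\alpha\o e_i=e_j\o X_\alpha$. Suppose $f\o e=g\o e$ for $f,g\colon Y\to Z$, and write $Z=\lim_k Z_k$ with $Z_k$ in $\Df$ and projections $r_k$. Fix $k$. Using finite copresentability of $Z_k$, factor $r_k\o f$ and $r_k\o g$ through a common projection $q_i$, say as $\hat f\o q_i$ and $\hat g\o q_i$; then $\hat f\o e_i$ and $\hat g\o e_i$ agree after precomposition with $p_i$, so by essential uniqueness they agree after some connecting morphism $X_\gamma\colon X_{i'}\to X_i$. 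Hence $\hat f\o Y_\gamma\o e_{i'}=\hat f\o e_i\o X_\gamma=\hat g\o e_i\o X_\gamma=\hat g\o Y_\gamma\o e_{i'}$, and since $e_{i'}\in\E_\f$ is epic we get $\hat f\o Y_\gamma=\hat g\o Y_\gamma$, whence $r_k\o f=\hat f\o q_i=\hat f\o Y_\gamma\o q_{i'}=\hat g\o Y_\gamma\o q_{i'}=r_k\o g$; as $k$ was arbitrary, $f=g$. Finally, $\hatE$ and $\hatM$ are closed under isomorphism in $(\Pro\Df)^{\to}$ (being limit-closures), hence under composition with isomorphisms on either side and in particular contain all isomorphisms; combining this with the existence of factorizations, the orthogonality, and the properness just shown, the standard argument gives $\hatE={}^{\perp}\hatM$ and $\hatM=\hatE^{\perp}$, so $(\hatE,\hatM)$ is a factorization system, proper because its left class consists of epimorphisms and its right class of monomorphisms. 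The one genuinely delicate step is the epi-ness of $\hatE$-morphisms; it is precisely the arrow-category compatibility $Y_\gamma\o e_{i'}=e_i\o X_\gamma$ that rescues it.
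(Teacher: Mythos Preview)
Your proof is correct and follows essentially the same approach as the paper: levelwise $(\E_\f,\M_\f)$-factorization of a level representation for existence, reduction of the diagonal fill-in to the case $m\in\M_\f$ via finite copresentability, and the same ``push down, use essential uniqueness, then cancel the level-$i'$ epi'' argument for showing $\hatE$-morphisms are epic. Your organization is slightly different---you invoke the abstract closure of right-orthogonal classes under arrow-category limits to reduce to $\M_\f$, whereas the paper carries out that reduction explicitly, and your $\hatM\subseteq\mathsf{Mono}$ argument via ``limits of monos are monos'' is marginally slicker---but the substance is the same; the one claim you gloss over (``level representations exist'') is exactly the fact $(\Pro\Df)^\to\simeq\Pro(\Df^\to)$ that the paper also cites from the literature.
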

\begin{proof}
  \begin{enumerate}
  \item\label{P:hatfs:1} All morphisms of
    $\hatE$ are epic. This follows from the dual
    of~\cite[Prop.~1.62]{Adamek1994}; however, we give a direct
    proof. Given $e\colon X\to Y$ in $\hatE$, we have a limit cone of
    a cofiltered diagram $D$ in $(\Pro{\Df})^\to$ as follows:
    \[ 
      \vcenter{\xymatrix{
          X \ar[r]^e \ar[d]_{a_i} & Y \ar[d]^{b_i} \\
          A_i \ar@{->>}[r]_{e_i} & B_i 
        } }
      \qquad\qquad (i\in I)
    \]
    where $e_i\in \E_\f$ for each $i\in I$. Let $p,q\colon Y\to Z$ be
    two morphisms with $p\o e = q\o e$; we need to show $p=q$. Without
    loss of generality we can assume that the object $Z$ is finite
    because $\Df$ is limit-dense in $\Pro{\Df}$. Since $(b_i)$ is a
    cofiltered limit cone in $\Pro{\Df}$, there exists $i\in I$ such
    that $p$ and $q$ factorize through $b_i$, i.e.~there exists
    morphisms $p',q'$ with $p'\o b_i=p$ and $q'\o b_i = q$. The limit
    projection $a_i$ of the cofiltered limit $X=\lim A_i$ in
    $\Pro{\Df}$ merges $p'\o e_i$ and $q'\o e_i$ (since $e$ merges $p$
    and $q$). Since $Z$ is finite, there exists a connecting morphism
    $(a_{ji},b_{ji})$ of $D$ such that $p'\o e_i$ and $q'\o e_i$ are
    merged by $a_{ji}$.
    \[
      \xymatrix@R+1em{
        & X \ar@/_2em/[ddl]_{a_j} \ar[r]^e \ar[d]_{a_i} & Y \ar[d]^{b_i} \ar@<0.5ex>[r]^p \ar@<-0.5ex>[r]_q & Z \\
        & A_i  \ar@{->>}[r]_{e_i} \ar@{->>}[r]_{e_i} & B_i  \ar@/_1em/@<0.5ex>[ur]^{p'} \ar@/_1em/@<-0.5ex>[ur]_{q'} &  \\
        A_j \ar[ur]^{a_{ji}} \ar@{->>}[rrr]_{e_j} & & & B_j
        \ar[ul]_{b_{ji}} }
    \]
    Therefore
    \[
      p'\o b_{ji}\o e_j = p'\o e_i\o a_{ji} = q'\o e_i\o a_{ji} =
      q'\o b_{ji}\o e_j.
    \]
    Since $e_j$ is an epimorphism in $\Df$, this implies
    $p'\o b_{ji} = q'\o b_{ji}$. Thus
    \[p=p'\o b_i = p'\o b_{ji}\o b_j = q' \o b_{ji}\o b_j = q'\o b_i
      = q. \]
    
  \item All morphisms of $\hatM$ are monic.  Indeed, given
    $m\colon X\to Y$ in $\hatM$, we have a limit cone of a cofiltered
    diagram $D$ in $(\Pro{\Df})^\to$ as follows:
    \[ 
      \vcenter{\xymatrix{
          X \ar[r]^m \ar[d]_{a_i} & Y \ar[d]^{b_i} \\
          A_i \ar@{ >->}[r]_{m_i} & B_i 
        } }
      \qquad\qquad (i\in I)
    \]
    where $m_i\in \M_\f$ for each $i\in I$. Suppose that
    $f,g\colon Z\to X$ with $m\o f = m\o g$ are given. Express $Z$ as
    a cofiltered limit $z_j\colon Z\epito Z_j$ ($j\in J$) of finite
    objects with epimorphic limit projections $z_j$. For each
    $i\in I$, since $A_i$ is finitely copresentable, we obtain a
    factorization of $a_i\o f$ and $a_i\o g$ through some $z_{j_i}$,
    say $f_i\o z_{j_i}=f$ and $g_i \o z_{j_i}=g$.
    \[ 
      \xymatrix{
        & Z \ar@<0.5ex>[r]^f \ar@<-0.5ex>[r]_g \ar@{->>}[d]_{z_{j_i}}   & X \ar[r]^m \ar[d]_{a_i} & Y \ar[d]^{b_i} \\
        & Z_{j_i} \ar@<0.5ex>[r]^{f_i} \ar@<-0.5ex>[r]_{g_i} &  A_i
        \ar@{ >->}[r]_{m_i} & B_i 
      }
    \]
    From $m\o f = m\o g$ it follows that
    $m_i\o f_i\o z_{j_i} = m_i\o g_i\o z_{j_i}$ for each $i$. This
    implies $m_i\o f_i = m_i\o g_i$ because $z_{j_i}$ is epic, and
    thus $f_i = g_i$ because $m_i$ is monic in $\Df$. Therefore,
    $a_i\o f = a_i\o g$ for each $i$, thus $f=g$ because the limit
    projections $a_i$ are collectively monic.

  \item Every morphism $g\colon X \to Y$ of $\Pro{\Df}$ has an
    $(\hatE,\hatM)$-factorization.  Indeed, $(\Pro\Df)^\to$ is the
    pro-completion of $\Df^\to$; see~\cite[Cor.~1.54]{Adamek1994} for
    the dual statement. Thus, there exists a cofiltered diagram
    $R\colon I\to \Df^\to$ with limit $g$. Let the following morphisms
      \[
        \vcenter{\xymatrix{
          X \ar[d]_{a_i} \ar[r]^{g} & Y \ar[d]^{b_i}\\
          A_i \ar[r]_{g_i} & B_i 
        }}
        \quad (i \in I)
      \] 
form the limit cone. Factorize $g_i$ into an $\E$-morphism $e_i \colon A_i \epito C_i$ and followed by an $\M$-morphism $m_i\colon C_i \monoto B_i$. Since $\Df$ is closed under
      subobjects, we have $e_i \in \E_\f$ and $m_i \in \M_\f$. Diagonal
      fill-in yields a diagram $\overline{R}\colon I \to \Df$ with objects
      $C_i$, $i \in I$, and connecting morphisms derived from those of $R$. Let
      $Z \in \Pro\Df$ be a limit of $\overline{R}$ with the limit cone
      \[
        c_i\colon Z \to C_i \quad (i\in I).
      \]
      Then there are unique morphisms $e = \lim e_i \in \widehat{\E}$, and $m =
      \lim m_i \in \widehat{\M}$ such that the following diagrams commute for all $i\in I$:
      \[
        \xymatrix@R-1em{
          X \ar[dd]_{a_i} \ar[rr]^{g}\ar[rd]_{e} & & Y \ar[dd]^{b_i} \\
                                                 & Z \ar[ru]_{m} \ar[d]^{c_i} \\
          A_i \ar@{->>}[r]_{e_i} & C_i \ar@{ >->}[r]_{m_i} & B_i
        }
      \]
      
    \item\label{itm:prop:factorization-proD-2} We verify the diagonal
      fill-in property. Let a commutative square
      \[
        \xymatrix{
          X \ar[r]^e \ar[d]_{u} & Y \ar[d]^{v} \\
          P \ar[r]_{m} & Q 
        }
      \]
      with $e \in \hatE$ and $m \in \hatM$ be given. 
      \begin{enumerate}
      \item\label{itm:prop:factorization-proD-2a} Assume first that
        $m \in \M_\f$. Express $e$ as a cofiltered limit of objects
        $e_i \in \E_\f$ with the following limit cone:
        \[
          \xymatrix{
            X \ar[r]^{e} \ar[d]_{a_i} & Y \ar[d]^{b_i}\\
            A_i \ar[r]_{e_i} & B_i
          }
        \]
        Since $P$ is finite and $X = \lim A_i$ is a cofiltered limit,
        $u$ factorizes through some $a_i$. Analogously for $v$ and
        some $b_i$; the index $i$ can be chosen to be the same since
        the diagram is cofiltered. Thus we have morphisms $u'$, $v'$
        such that in the following diagram the left-hand triangle and
        the right-hand one commute:
        \[
          \xymatrix{
            X
            \ar[r]^-{e}
            \ar `l[d] `[dd]_{u} [dd]
            \ar[d]_{a_i}
            &
            Y
            \ar `r[d] `[dd]^{v} [dd]
            \ar[d]^{b_i}
            \\
            A_i \ar[d]_{u'}
            \ar@{->>}[r]^-{e_i}
            &
            B_i \ar[d]^{v'}
            \\
            P \ar[r]_{m}
            &
            Q 
          }
        \]
        Without loss of generality, we can assume that the lower part
        also commutes.  Indeed, $Q$ is finite and the limit map $a_i$
        merges the lower part:
        \[
          (m\o u')\o a_i = m\o u = v\o e = v'\o b_i \o e = (v'\o e_i)
          \o a_i.
        \]
        Since our diagram is cofiltered, some connecting morphism from
        $A_j$ to $A_i$ also merges the lower part. Hence, by choosing
        $j$ instead of $i$ we could get the lower part commutative.
        
        Since $e_i \in \E$ and $m \in \M$, we use diagonal fill-in to
        get a morphism $d\colon B_i \to P$ with $d\o e_i=u'$ and
        $m\o d=v'$. Then $d\o b_i\colon Y \to P$ is the desired diagonal in
        the original square.

      \item Now suppose that $m\in \hatM$ is arbitrary, i.e.~a
        cofiltered limit a diagram $D$ whose objects are morphisms
        $m_t$ of $\M_\f$ with a limit cone as follows:
        \[
          \vcenter{\xymatrix{
              P \ar[r]^{m} \ar[d]_{p_t} & Q \ar[d]^{q_t} \\
              P_t \ar@{ >->}[r]_{m_t} & Q_t
            }}
          \quad
          (t \in T)
        \]
        For each $t$ we have, due
        to item~\ref{itm:prop:factorization-proD-2a} above, a diagonal fill-in
        \[
          \xymatrix{
            X \ar[r]^{e} \ar[d]_{u} & Y \ar[d]^{v}
            \ar[ldd]|*+{\labelstyle d_t} \\
            P \ar[d]_{p_t} & Q \ar[d]^{q_t}\\
            P_t \ar@{ >->}[r]_{m_t} & Q_t
          }
        \]
        Given a connecting morphism $(p, q)\colon m_t \to m_s$ 
        ($t, s \in T$) of the diagram $D$, the following triangle 
        \[
          \xymatrix@C-1em{
            & Y \ar[ld]_{d_t} \ar[rd]^{d_s} \\
            P_t\ar[rr]_{p} && P_s
          }
        \]
        commutes, that is, all $d_t$ form a cone of the diagram
        $D_0\cdot D$, where $D_0\colon \Df^\to \to \Df$ is the domain
        functor, with limit $p_t\colon P \to P_t$ ($t \in T$). Indeed,
        $e$ is epic by item~\ref{P:hatfs:1}, and from the fact that
        $p_s = p \o p_t$ we obtain
        \[
          (p \o d_t) \o e = p \o p_t \o u = p_s \o u = d_s \o e.
        \]
        Thus, there exists a unique $d\colon Y \to P$ with $d_t = p_t \o d$
        for all $t \in T$. This is the desired diagonal: $u = d \o e$ follows
        from $(p_t )_{t \in T}$ being collectively monic, since 
        \[
          p_t\o u = d_t \o e = p_t \o d \o e.\tag*{\qedhere}
        \]
This implies $v=d\o m$ because $v\o e = m\o u = m\o d\o e$ and $e$ is epic.
      \end{enumerate}
    \end{enumerate}
\end{proof}

\begin{prop}\label{prop:factorsation-system-subcategory}
  Let $\D$ be a full subcategory of $\SigStr$ closed under products and
  subobjects. Then in $\Pro\Df \subseteq \Stone\D$ we have
  \begin{align*}
    \widehat{\E} & = \text{surjective morphisms, and} \\
    \widehat{\M} & = \text{relation-reflecting injective morphisms},
  \end{align*}
  cf.~\Cref{remark:factorization}\ref{R:fs:2}.
\end{prop}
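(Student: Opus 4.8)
The plan is to identify $\widehat{\E}$ and $\widehat{\M}$ by comparing them with the surjective, respectively the relation-reflecting injective, morphisms, using the factorization system on $\Stone(\SigStr)$ from \Cref{remark:factorization}\ref{R:fs:2} and the fact (\Cref{coro:pro-Df}) that $\Pro\Df$ is a \emph{full} subcategory of $\Stone(\SigStr)$. Write $\E_0$ for the class of surjective morphisms of $\Pro\Df$ and $\M_0$ for the class of injective relation-reflecting morphisms of $\Pro\Df$. By \Cref{prop:factorization-of-proD}, $(\widehat{\E},\widehat{\M})$ is a proper factorization system on $\Pro\Df$; since an orthogonal factorization system is determined by either of its classes, $\widehat{\E}={}^{\perp}\widehat{\M}$ and $\widehat{\M}=\widehat{\E}^{\perp}$. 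Hence it suffices to prove: \textup{(i)}~$\widehat{\E}\subseteq\E_0$; \textup{(ii)}~$\widehat{\M}\subseteq\M_0$; and \textup{(iii)}~in $\Pro\Df$ every morphism of $\E_0$ is left-orthogonal to every morphism of $\M_0$. Indeed, (ii) and (iii) give $\E_0\subseteq{}^{\perp}\M_0\subseteq{}^{\perp}\widehat{\M}=\widehat{\E}$, which with (i) yields $\widehat{\E}=\E_0$; dually, (i) and (iii) give $\M_0\subseteq\E_0^{\perp}\subseteq\widehat{\E}^{\perp}=\widehat{\M}$, which with (ii) yields $\widehat{\M}=\M_0$.

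To establish (i), I would observe that a morphism of $\widehat{\E}$ is, by definition, a cofiltered limit in $(\Pro\Df)^{\to}$ of morphisms of $\E_\f$, i.e.~of surjective homomorphisms between finite $\D$-structures. Since limits in $\Pro\Df\subseteq\Stone\D$ are formed on the level of $\Set^\S$ with the limit topology (\Cref{rem:stronesstr_limits} and \Cref{remark:SigStr-limits}) and limits in arrow categories are computed componentwise (\Cref{rem:arrowcat}), for each sort $s$ the $s$-component of this limit is a cofiltered limit in $\Stone^{\to}$ of epimorphisms between finite discrete spaces, hence epic by \Cref{lem:epi}\ref{itm:cofilteredepiarrowcat}; therefore every sort-component of the limit morphism is surjective, so it lies in $\E_0$.

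For (ii), a morphism $m\colon P\to Q$ of $\widehat{\M}$ is a cofiltered limit $m=\lim_t m_t$ in $(\Pro\Df)^{\to}$ of morphisms $m_t\colon P_t\monoto Q_t$ of $\M_\f$, with limit projections $p_t\colon P\to P_t$, $q_t\colon Q\to Q_t$ satisfying $q_t\o m=m_t\o p_t$. Each sort-component of $m$ is a limit of injective functions between finite sets, hence injective, so $m$ is injective. For relation-reflection, fix a relation symbol $r\colon s_1,\dots,s_n$ and a tuple $(y_1,\dots,y_n)\in P^{s_1}\times\cdots\times P^{s_n}$ with $(m^{s_1}(y_1),\dots,m^{s_n}(y_n))\in r_Q$. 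Applying $q_t$ and using the construction of relations in cofiltered limits (\Cref{remark:SigStr-limits}) shows $(m_t^{s_1}(p_t^{s_1}(y_1)),\dots,m_t^{s_n}(p_t^{s_n}(y_n)))\in r_{Q_t}$ for every $t$, whence $(p_t^{s_1}(y_1),\dots,p_t^{s_n}(y_n))\in r_{P_t}$ since $m_t$ reflects relations; as this holds for all $t$, the same description gives $(y_1,\dots,y_n)\in r_P$. Thus $m\in\M_0$.

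For (iii), any commutative square in $\Pro\Df$ with a morphism of $\E_0$ on one side and a morphism of $\M_0$ on the opposite side is a commutative square in $\Stone(\SigStr)$, so the factorization system of \Cref{remark:factorization}\ref{R:fs:2} supplies a unique diagonal there; by fullness of $\Pro\Df$ in $\Stone(\SigStr)$ this diagonal is a morphism of $\Pro\Df$, unique as such. This yields (iii) and hence the proposition. The argument is thus largely formal manipulation with factorization systems together with the concrete description of $\Pro\Df$ as the category of profinite $\D$-structures; the one genuinely non-formal ingredient is step (i), where compactness enters through \Cref{lem:epi} to ensure that cofiltered limits of surjections between finite sets remain surjective. (For the concrete description to apply one should also recall that the hypothesis on $\D$ — closure under products and subobjects — forces closure under all limits, since equalizers are relation-reflecting subobjects of products, so that \Cref{coro:pro-Df} is applicable.)
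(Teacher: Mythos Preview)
Your proof is correct and takes a genuinely different, more economical route than the paper. The paper establishes all four inclusions $\widehat{\E}\subseteq\E_0$, $\E_0\subseteq\widehat{\E}$, $\widehat{\M}\subseteq\M_0$, $\M_0\subseteq\widehat{\M}$ by direct construction: the two ``easy'' inclusions (your (i) and (ii)) are handled essentially as you do, but the reverse inclusions are obtained by hand---for $\E_0\subseteq\widehat{\E}$ the paper expresses a given surjection as a cofiltered limit in the arrow category, factorizes each object of the diagram, and shows the resulting diagram of finite surjections still has the original map as its limit; for $\M_0\subseteq\widehat{\M}$ it argues via Stone duality, first proving a lemma about filtered colimits of epimorphisms in $\BA^\to$ and then transporting it back. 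Your approach replaces these two technical constructions by the single observation (iii) that the ambient factorization system on $\Stone(\SigStr)$ already provides diagonals, which land in $\Pro\Df$ by fullness; the reverse inclusions then follow formally from $\widehat{\E}={}^{\perp}\widehat{\M}$ and $\widehat{\M}=\widehat{\E}^{\perp}$. This is cleaner and leans on \Cref{prop:factorization-of-proD} to do the heavy lifting. The paper's approach, on the other hand, is more explicit: it actually exhibits each surjection (resp.\ relation-reflecting injection) in $\Pro\Df$ as a cofiltered limit of finite surjections (resp.\ injections), which is informative in its own right even if not strictly needed for the statement.
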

\begin{proof}
  \begin{enumerate}
  \item Let $e \colon X \to Y$ be a surjective morphism of
    $\Pro\Df$. We shall prove that $e\in \hatE$ by expressing it as a
    cofiltered limit of a diagram of quotients in $\Df^\to$. In
    $\Stone\D$ we have the factorization system $(\E_0, \M_0)$ where
    $\E_0$ = surjective homomorphisms, and $\M_0$ = injective
    relation-reflecting homomorphisms. This follows from
    \Cref{remark:factorization} and the fact that $\D$, being
    closed under subobjects in $\SigStr$, inherits the factorization
    system $\SigStr$.
      
 The category $\D$ is closed under products and subobjects, so it is closed under all limits. Since $\Pro\Df$ is the closure of $\Df$ under cofiltered limits in
    $\Stone(\D)$ by \Cref{coro:pro-Df}, also
    $(\Pro{\Df})^\to = \Pro{(\Df^\to)}$ is the closure of $\Df^\to$
    under cofiltered limits in $(\Stone{\D})^\to$.  Thus for $e$ there
    exists a cofiltered diagram $D$ in $\Df^\to$ of morphisms
    $h_i\colon A_i \to B_i$ ($i \in I$) of $\Df$ with a limit cone in
    $(\Stone{\D})^\to$ as follows:
    \[
      \xymatrix{
        X \ar[r]^{e} \ar[d]_{a_i} & Y \ar[d]^{b_i} \\
        A_i \ar[r]_{h_i} & B_i
      }
    \]
    Using the factorization system $(\E_0, \M_0)$ we factorize
    \[
      a_i = m_i \o \overline{a}_i
      \quad\text{and}\quad
      b_i = n_i \o  \overline{b}_i
      \quad
      \text{for $i\in I$}, 
    \]
    and use the diagonal fill-in to define morphisms $\overline{h}_i$
    as follows:
    \[
      \xymatrix{
        X \ar@{->>}[r]^{e}\ar@{->>}[d]_{\overline{a}_i} &
        Y\ar@{->>}[d]^{\overline{b}_i} \\
        \overline{A}_i \ar@{-->}[r]^{\overline{h}_i}\ar@{ >->}[d]_{m_i}
        &
        \overline{B}_i\ar@{ >->}[d]^{n_i}
        \\
        A_i \ar[r]_{h_i} & B_i
      }
    \]
    We obtain a diagram $\overline{D}$ with objects
    $\overline{h}_i\colon \overline{A}_i \to \overline{B}_i$
    ($i \in I$) in $(\Stone\D)^\to$. Connecting morphisms are derived
    from those of $D$: given $(p, q)\colon h_i \to h_j$ in $D$
    \[
      \xymatrix{
        A_i \ar[r]^{h_i}\ar[d]_{p} & B_i \ar[d]^{q} \\
        A_j \ar[r]_{h_j} & B_j
      }
    \]
    the diagonal fill-in property yields morphisms $\ol p$ and $\ol q$
    as follows:
    \[
      \vcenter{
        \xymatrix@R-1em{
          X \ar[dd]_{\overline{a}_j} \ar@{->>}[r]^{\overline{a}_i} 
          & \overline{A}_i \ar[d]^{m_i}\ar@{-->}[ldd]_{\overline{p}} \\
          & A_i \ar[d]^{p} \\
          \overline{A}_j \ar@{ >->}[r]_{m_j} & A_j 
        }
      }
      \qquad
      \vcenter{
        \xymatrix@R-1em{
          Y \ar[dd]_{\overline{a}_j} \ar@{->>}[r]^{\overline{b}_i} 
          & \overline{B}_i \ar[d]^{n_i}\ar@{-->}[ldd]_{\overline{q}} \\
          & B_i \ar[d]^{q} \\
          \overline{B}_j \ar@{ >->}[r]_{n_j} & B_j 
        }
      }
    \]
    It is easy to see that $(\overline{p}, \overline{q})$ is a
    morphism from $\overline{h}_i$ to $\overline{h}_j$ in
    $(\Stone\D)^\to$. This yields a cofiltered diagram
    $\overline{D}$. Since
    $\overline{h}_i \o \overline {a}_i = \overline{b}_i \o e$ is
    surjective, it follows that $\overline{h}_i$ is also
    surjective. We claim that the morphisms
    \[
      (\overline{a}_i, \overline{b}_i)\colon e \to \overline{h}_i
      \quad (i\in I)
    \]
    form a limit cone of $\overline{D}$. To see this, note first that
    since the morphisms $(a_i, b_i) \colon e \to h_i$, $i \in I$, form
    a cone of $D$ and all $m_i$ and $n_i$ are monic, the morphisms
    $(\overline{a}_i, \overline{b}_i)$, $i \in I$, form a cone of
    $\overline{D}$. Now let another cone be given with domain
    $r\colon U \to V$ as follows:
    \[
      \xymatrix@R-1em{
        U \ar[r]^{r}\ar[d]_{u_{i}} & V \ar[d]^{v_{i}} \\
        \overline{A}_i \ar@{ >->}[d]_{m_i}
        \ar[r]^{\overline{h}_i} & \overline{B}_i\ar@{ >->}[d]^{n_i} \\
        A_i \ar[r]_{h_i} & B_i
      }
    \]
    Then we get a cone of $D$ for all $i\in I$ by the morphisms
    $(m_iu_i, n_iv_i)\colon r \to h_i$. The unique factorization
    $(u,v)$ through the limit cone of $D$:
    \[
      \xymatrix@R-1em{
        {\,U} \ar[r]^{r} \ar[d]_{u} \ar@<.3ex>`l[]`[dd]_{m_iu_i} & V\; \ar[d]^{v} \ar`r[]`[dd]^{n_iv_i}[dd] \\
        X \ar[r]^{e} \ar[d]_{a_i} & Y \ar[d]^{b_i} \\
        A_i \ar[r]_{h_i} & B_i
      }
    \]
    is a factorization of $(u_i, v_i)$ through the cone
    $(\overline{a}_i, \overline{b}_i)$. Indeed, in the following
    diagram
    \[
      \xymatrix@R-1em{
        {\,U} \ar[r]^{r} \ar[d]_{u} \ar@<.3ex>`l[]`[dd]_{u_i} & V\;
        \ar[d]^{v}
        \\
        X \ar[r]^{e} \ar[d]_{\overline{a}_i} & Y \ar[d]^{\overline{b}_i} \\
        \overline{A}_i \ar[d]_{m_i} \ar[r]^{\overline{h}_i}
        &
        \overline{B}_i
        \ar[d]^{n_i}
        \ar@{<-} `r[u]`[uu]_{v_i}[uu]
        \\
        A_i \ar[r]_{h_i} & B_i
      }
    \]
    the desired equality $v_i = \overline{b}_i v$ follows since $n_i$
    is monic; analogously for $u_i = \overline{a}_i u$. The uniqueness
    of the factorization $(u, v)$ also follows from the last diagram:
    if the upper left-hand and right-hand parts commute, then $(u, v)$
    is a factorization of the cone $(m_iu_i, n_iv_i)$ through the
    limit cone of $D$. Thus, it is unique.

  \item Conversely, every cofiltered limit of quotients in $\Df^\to$
    is surjective in $\Pro\Df$. Indeed, cofiltered limits in $\Pro\Df$
    are formed in $\Stone(\SigStr)$ by \Cref{coro:pro-Df}, and the
    forgetful functor into $\Stone$ thus preserves them. Hence the
    same is true about the forgetful functor from $(\Pro\Df)^\to$ to
    $\Stone^\to$. Thus, the claim follows from \Cref{lem:epi}.
    
  \item We show that every morphism of $\Pro{\Df}$ which is monic and
    reflects relations is an element of $\hatM$.
    \begin{enumerate}
    \item\label{itm:monocolim} We first prove a property of filtered
      colimits in $\BA^\to$. Let $D$ be a filtered diagram with
      objects $h_i\colon A_i\to B_i$ ($i\in I$) in $\BA^\to$. Let
      $h_i=m_i\o e_i$ be the factorization of $h_i$ into an
      epimorphism $e_i\colon A_i\epito \ol{B}_i$ followed by a
      monomorphism $m_i\colon \ol{B}_i\monoto B_i$ in $\BA$. Using
      diagonal fill-in we get a filtered diagram $\ol D$ with objects
      $e_i$ ($i\in I$) and with connecting morphisms
      $(u,\ol v)\colon e_i\to e_j$ derived from the connecting
      morphisms $(u,v)\colon h_i\to h_j$ of $D$ using diagonal fill-in:
      \[
        \xymatrix{
          A_i \ar[rr]^{h_i} \ar[ddd]_{u} \ar@{->>}[dr]^{e_i} & & B_i \ar[ddd]^v \\
          & \ol{B}_i  \ar[d]_{\ol v} \ar@{>->}[ur]^{m_i} & \\
          & \ol{B}_j \ar@{>->}[dr]^{m_j} & \\
          A_j \ar@{->>}[ur]^{e_j} \ar[rr]_{h_j} & &  B_j
        }
      \] 
      Our claim is that if the colimit $h=\colim h_i$ in $\BA^\to$ is
      an epimorphism of $\BA$, then one has $h=\colim e_i$. To see
      this, suppose that a colimit cocone of $D$ is given as follows:
      \[
        \xymatrix{
          A_i \ar@{->>}[dr]^{e_i} \ar[rr]^{h_i} \ar[dd]_{a_i} & & B_i \ar[dd]^{b_i} \\
          & \ol{B}_i \ar@{>->}[ur]^{m_i} & \\
          A \ar[rr]_h & & B
        }
      \]
      Then we prove that $\ol D$ has the colimit cocone
      $(a_i, b_i\o m_i)$, $i\in I$. Indeed, since $A=\colim A_i$ with
      colimit cocone $(a_i)$, all we need to verify is that
      $B=\colim \ol{B}_i$ with cocone $(b_i\o m_i)$. This cocone is
      collectively epic because every element $x$ of $B$ has the form
      $x=h(y)$ for some $y\in A$, using that $h$ is epic by
      hypothesis, and that the cocone $(a_i)$ is collectively epic.  The
      diagram $\ol D$ is filtered, thus, to prove that
      $B=\colim \ol{B}_i$, we only need to verify that whenever a
      pair $x_1,x_2\in \ol{B}_i$ (for some $i\in I$) is merged by
      $b_i\o m_i$, there exists a connecting morphism
      $\ol v\colon \ol{B}_i\to\ol{B}_j$ merging $x_1,x_2$. Since $m_i$
      is monic and $B=\colim B_i$, some connecting morphism
      $v\colon B_i\to B_j$ merges $m_i(x_1)$ and $m_i(x_2)$. Then
      \[ m_j\o \ol{v}(x_1) = v\o m_i(x_1) = v\o m_i(x_2) = m_j\o \ol{v}(x_2), \]
      whence $\ol{v}(x_1)=\ol{v}(x_2)$ because $m_j$ is monic.
      
    \item\label{itm:wfunc} Denote by
      $W\colon \Stone(\SigStr)\to\Set^\S$ the forgetful functor
      mapping a Stone-topological $\Sigma$-structure to its underlying
      sorted set. Moreover, letting $\Sigma_\rel \seq \Sigma$ denote
      the set of all relation symbols in $\Sigma$, we have the
      forgetful functors
      \[
        W_r\colon \Stone(\SigStr)\to \Set\quad (r\in \Sigma_\rel)
      \]
      assigning to every object $A$ the corresponding subset
      $r_A\seq A^{s_1}\times\cdots\times A^{s_n}$.  From the
      description of limits in $\SigStr$ in
      \Cref{remark:SigStr-limits}, it follows that the functors $W$
      and $W_r$ ($r\in \Sigma_\rel$) collectively preserve and reflect
      limits.  That is, given a
      diagram $D$ in $\Stone(\SigStr)$, a
      cocone of $D$ is a limit cone if and only if its image under $W$
      is a limit cone of $W\o D$ and its image under $W_r$ is a limit
      cone of $W_r\o D$ for all $r\in \Sigma_\rel$.
      
    \item We are ready to prove that if $h\colon A\to B$ in
      $\Pro{\Df}$ is a relation-reflecting monomorphism, then
      $h\in \hatM$. We have a cofiltered diagram $D$ in $\Df^\to$ with
      objects $h_i\colon A_i\to B_i$ and a limit cone
      $(a_i,b_i)\colon h_i\to h$ ($i\in I$). Let $h_i=m_i\o e_i$ be
      the image factorization in $\SigStr$.
      \[
        \xymatrix{
          A \ar[rr]^h \ar[d]_{a_i} && B \ar[d]^{b_i} \\
          A_i
          \ar@{->>}[r]_{e_i}
          \ar@/_2em/[rr]_{h_i} [rr]
          &
          \ol{A}_i \ar@{>->}[r]_{m_i} & B_i
        }
      \]
      It is our goal to prove that $h=\lim_{i\in I} m_i$. More
      precisely: we have $m_i$ in $\Df^\to$ and diagonal fill-in
      yields a cofiltered diagram $\ol D$ of these objects in
      $\Df^\to$. We will prove that $(e_i\o a_i, b_i)\colon h\to m_i$
      ($i\in I$) is a limit cone. By part \ref{itm:wfunc} above it
      suffices to show that the images of that cone under $W^\to$ and
      $W_r^\to$ ($r\in \Sigma_\rel$) are limit cones.

      For $W^\to$ just dualize \ref{itm:monocolim}: from the fact that
      $Wh=\lim Wh_i$ we derive $Wh=\lim Wm_i$. We need to show that $W_r$ preserves the limit of the diagram of all $m_i$'s. Given
      $r\colon s_1,\ldots, s_n$ in $\Sigma_\rel$, we know that $r_A$
      consists of the $n$-tuples $(x_1,\ldots, x_n)$ with
      $(a_i(x_1),\ldots,a_i(x_n)) \in r_{A_i}$ for every $i\in I$ (see
      \Cref{remark:SigStr-limits}). In particular, for
      $(x_1,\ldots, x_n)\in r_A$ we have
      $(e_i\o a_i(x_1),\ldots, e_i\o a_i(x_n))\in
      r_{\ol{A}_i}$. Conversely, given $(x_1,\ldots,x_n)$ with the
      latter property, then
      $(m_i\o e_i\o a_i(x_1),\ldots, m_i\o e_i\o a_i(x_n))\in
      r_{B_i}$, i.e.~$(b_i\o h(x_1),\ldots, b_i\o h(x_n))\in r_{B_i}$
      for all $i\in I$. Since $B=\lim B_i$, this implies
      $(h(x_1),\ldots, h(x_n))\in r_B$, whence
      $(x_1,\ldots, x_n)\in r_A$ because $h$ is relation-reflecting.
    \end{enumerate}
    
  \item It remains to prove that every morphism $m\in \hatM$ is a
    relation-reflecting monomorphism. Let a cofiltered limit cone be
    given as follows:
    \[ 
      \vcenter{\xymatrix{
          A \ar[r]^m \ar[d]_{a_i} & B \ar[d]^{b_i} \\
          A_i \ar@{ >->}[r]_{m_i} & B_i 
        } }
      \qquad\qquad (i\in I)
    \]
    where each $m_i$ lies in $\M_\f$, i.e.~is a relation-reflecting
    monomorphism in $\Df$. Then $m$ is monic: given $x\neq y$ in $A$,
    there exists $i\in I$ with $a_i(x)\neq a_i(y)$ because the limit
    projections $a_i$ are collectively monic. Since $m_i$ is monic,
    this implies $b_i\o m(x)\neq b_i\o m(y)$, whence $m(x)\neq m(y)$.

    Moreover, for every relation symbol $r\colon s_1,\ldots, s_n$ in
    $\Sigma$ and
    $(x_1,\ldots,x_n)\in A^{s_1}\times\cdots\times A^{s_n}$, we have
    that
    \[
      (x_1,\ldots, x_n)\in r_A
      \quad\text{iff}\quad
      (m(x_1),\ldots, m(x_n))\in r_B.
    \]
    Indeed, the \emph{only if} direction follows from the fact that the
    maps $m_i\o a_i$ preserve relations and the maps $b_i$
    collectively reflect them. For the \emph{if} direction, suppose that
    $(m(x_1),\ldots m(x_n))\in r_B$. Since for every $i\in I$ the
    morphism $b_i$ preserves relations and $m_i$ reflects them, we get
    $(a_i(x_1),\ldots, a_i(x_n))\in r_{A_i}$ for every $i$. Since the
    maps $a_i$ collectively reflect relations, this implies
    $(x_1,\ldots, x_n)\in r_A$.\qedhere
  \end{enumerate}
\end{proof}
We now introduce the crucial property of factorization systems needed for our main result. Actually it only concerns the class $\E$ of quotients and asserts it to be well-behaved with respect to cofiltered limits.

\begin{defn}
  The factorization system $(\E,\M)$ of $\D$ is called
  \emph{profinite} if $\E$ is closed in $\D^\to$ under cofiltered
  limits of finite quotients; that is, for every cofiltered diagram
  $D$ in $\D^\to$ whose objects are elements of $\E_\f$, the limit of
  $D$ in $\D^\to$ lies in $\E$.
\end{defn}

\begin{expl}\label{ex:profinite-factorization}
%
  For every full subcategory $\D \subseteq \SigStr$ closed under
  limits and subobjects, the factorization system of surjective
  morphisms and relation-reflecting injective morphisms is
  profinite. This follows from \Cref{lem:epi} and the fact that
  limits in $\D$ are formed at the level of underlying sets (see
  \Cref{remark:SigStr-limits}).
\end{expl}

\begin{prop}\label{prop:prop-of-profinite-fact-system}
If the factorization system $(\E,\M)$ of $\D$ is profinite, 
 the following holds:
  \begin{enumerate}
  \item The forgetful functor $V\colon \Pro\Df \to \D$ is faithful and
    satisfies $V(\widehat{\E}) \subseteq \E$.
  \item For every $\E$-projective object $X\in \D$, the object
    $\widehat{X}\in \hatD$ is $\widehat{\E}$-projective.
  \item Every object of $\Df$ is an $\widehat{\E}$-quotient of some
    $\hatE$-projective object in $\Pro{\Df}$. 
  \end{enumerate}
\end{prop}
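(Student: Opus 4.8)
The plan is to establish the three items in order; the profinite hypothesis is needed only for the first.

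\emph{Item (1).} For the inclusion $V(\widehat\E)\subseteq\E$: limits in arrow categories are formed componentwise (\Cref{rem:arrowcat}), so the functor $(\Pro\Df)^\to\to\D^\to$ that applies $V$ to domain and codomain is cofinitary. A morphism $e\in\widehat\E$ is, by definition, a cofiltered limit in $(\Pro\Df)^\to$ of morphisms $e_i\in\E_\f$, and since $VE\cong J$ we have $Ve_i=e_i\in\E$; hence $Ve$ is a cofiltered limit in $\D^\to$ of members of $\E_\f$, so $Ve\in\E$ because $(\E,\M)$ is profinite. For faithfulness of $V$: since $V$ is cofinitary (indeed a right adjoint, \Cref{lem:the-left-adjoint-to-V}) and $\Df$ is limit-dense in $\Pro\Df$, postcomposing with the limit projections onto finite objects reduces the claim to injectivity of the canonical map $\colim_i\Df(A_i,B)\to\D(\lim_i A_i,B)$ for a cofiltered diagram $(A_i)$ in $\Df$ and a finite object $B$. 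Given $\phi,\psi\colon A_j\to B$ that agree after precomposition with the limit projection, I would take their equalizer $E\monoto A_j$, which lies in $\Df$ since $\Df$ is closed under finite limits; the limit $\lim_i A_i$ then maps into $E$, and by the stabilization of images in cofiltered limits of finite objects (for $\D\subseteq\SigStr$ this is \Cref{lem:epi}) some connecting morphism $A_k\to A_j$ already factors through $E$, so $\phi$ and $\psi$ become equal in $\colim_i\Df(A_i,B)$. Making this image-stabilization step precise at the stated level of generality is where I expect the main difficulty to lie.

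\emph{Item (2).} This follows by transposing across the adjunction $\widehat{(\dash)}\dashv V$ of \Cref{lem:the-left-adjoint-to-V} and using the inclusion $V(\widehat\E)\subseteq\E$ just proved. Let $X$ be $\E$-projective in $\D$, let $e\colon P\epito P'$ be in $\widehat\E$, and let $g\colon\widehat X\to P'$. Its transpose is a morphism $\bar g\colon X\to VP'$; since $Ve\colon VP\to VP'$ lies in $\E$ and $X$ is $\E$-projective, there is $k\colon X\to VP$ with $Ve\circ k=\bar g$. Let $h\colon\widehat X\to P$ be the transpose of $k$. By naturality of the adjunction bijection in the codomain variable, the transpose of $Ve\circ k$ is $e\circ h$, so $g=e\circ h$ because transposition is a bijection. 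Hence $\widehat X$ is $\widehat\E$-projective.

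\emph{Item (3).} Let $D\in\Df$. By \Cref{assum:factorization-system}\ref{A:fs:2} there is a quotient $q\colon P\epito D$ in $\E$ with $P$ an $\E$-projective object of $\D$. Since $D$ is finite, $\widehat D=D$, and $\widehat q\colon\widehat P\to D$ is the limit projection associated with the finite $\E$-quotient $q$, which lies in $\widehat\E$ by \Cref{ex:profinite}\ref{itm:ex:profinite-1}. By item (2), $\widehat P$ is $\widehat\E$-projective, so $\widehat q\colon\widehat P\epito D$ exhibits $D$ as an $\widehat\E$-quotient of an $\widehat\E$-projective object, finishing the proof.
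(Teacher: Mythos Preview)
Your arguments for the inclusion $V(\widehat\E)\subseteq\E$ in item~(1) and for items~(2) and~(3) are correct and essentially identical to the paper's. The issue is your faithfulness argument in item~(1).

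The equalizer argument you sketch is circular at the stated level of generality. You take $\phi,\psi\colon A_j\to B$ with $\phi\o Vp_j=\psi\o Vp_j$ in~$\D$, form the equalizer $m\colon E\monoto A_j$ in~$\Df$, and observe that $Vp_j$ factors through~$E$ in~$\D$. To finish you need some connecting map $A_k\to A_j$ to factor through~$E$; equivalently, you need the $\D$-morphism $V(\lim_i A_i)\to E$ to come from a morphism $\lim_i A_i\to E$ in~$\Pro\Df$ (then finite copresentability of~$E$ gives the connecting map). But producing such a lift is precisely a fullness/faithfulness statement about~$V$ on morphisms into finite objects, which is what you are trying to prove. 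The ``image stabilization'' you invoke is available in the concrete case $\D\subseteq\SigStr$ via \Cref{lem:epi}, as you note, but you have no substitute for it under the abstract hypotheses of the proposition.

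The paper avoids this entirely by using the standard fact that a right adjoint is faithful iff every component of its counit is epic. It then shows that the counit $\epsilon_D\colon\widehat{VD}\to D$ lies in~$\widehat\E$ (hence is epic by \Cref{prop:factorization-of-proD}). The point is that the triangles $a\o\epsilon_D=\widehat{Va}$ from \Cref{constr:the-left-adjoint-to-V}\ref{C:laV:4} may be restricted to the initial subcategory of $D/\Df$ given by those $a$ lying in~$\widehat\E$; for such~$a$ one has $Va\in\E$ by the first part of item~(1), whence $\widehat{Va}\in\widehat\E$ by \Cref{ex:profinite}\ref{itm:ex:profinite-1}, and then \Cref{ex:profinite}\ref{itm:ex:profinite-3} gives $\epsilon_D\in\widehat\E$. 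This route uses only the profinite hypothesis and the already-established properties of~$\widehat\E$, with no appeal to any image-stabilization principle.
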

\begin{proof}
  \begin{enumerate}
    \item $V(\widehat{\E}) \subseteq \E$ is clear: given $e \in \hatE$
      expressed as a cofiltered limit of finite quotients $e_i$, $i \in I$, in
      $(\Pro\Df)^\to$, then since $V$ is cofinitary, we see that
      $Ve$ is a cofiltered limit of $Ve_i = e_i$ in $\D^\to$, thus $Ve \in \E$
      by the definition of a profinite factorization system.  

      To prove that $V$ is faithful, recall that a right adjoint is
      faithful if and only if each component of its counit is epic.
      Thus, it suffices to prove that $\epsilon_D \in \hatE$ (and use
      that by \Cref{prop:factorization-of-proD} every
      $\hatE$-morphism is epic).  The triangles defining $\epsilon_D$
      in \Cref{constr:the-left-adjoint-to-V}\ref{C:laV:4} can be restricted
      to those with $a \in \hatE$.  Indeed, in the slice category
      $D/\Df$ all objects $a\colon D \to A$ in $\hatE$ form an initial
      subcategory.

      Now given such a triangle with $a \in \widehat{\E}$ we know that
      $Va \in \E$. Thus all those objects $A$ form a cofiltered
      diagram with connecting morphisms in $\E$. Moreover,
      $\widehat{Va} \in \widehat{\E}$ by
      \Cref{ex:profinite}\ref{itm:ex:profinite-1}. This implies
      $\epsilon_D\in \hatE$ by
      \Cref{ex:profinite}\ref{itm:ex:profinite-3} .
      
    \item\label{P:profs:2} Let $X$ be an $\E$-projective object. To show that
      $\widehat X$ is $\hatE$-projective, suppose that a quotient
      $e\colon A\epito B$ in $\hatE$ and a morphism
      $f\colon \widehat X \to B$ are given. Since $\widehat{(\dash)}$
      is left adjoint to $V$ and $V(\hatE)\seq \E$, the morphism $f$
      has an adjoint transpose $f^*\colon X\to VB$ that factorizes through
      $VA$ via $g^*$ for some $g\colon \widehat X\to A$. Then
      $e\o g = f$, which proves that $\widehat X$ is projective.
      \[
        \xymatrix{
          & X \ar[dl]_{g^*} \ar[dr]^{f^*}& \\
          VA \ar@{->>}[rr]_{Ve} && VB 
        }
        \qquad\text{iff}\qquad
        \xymatrix{
          & \widehat X \ar[dl]_{g} \ar[dr]^{f}& \\
          A \ar@{->>}[rr]_{e} && B 
        }
      \]
      
    \item Given $A\in \Df$, by \Cref{assum:factorization-system}
      there exists an $\E$-projective object $X\in \D$ and a quotient
      $e\colon X\epito A$. The limit projection
      $\widehat e\colon \hatX \epito A$ lies in $\hatE$ by
      \Cref{ex:profinite}\ref{itm:ex:profinite-1}, and item~\ref{P:profs:2}
      above shows that $\widehat X$ is $\hatE$-projective.\qedhere
  \end{enumerate}
\end{proof}
We are ready to prove the following general form of the Reiterman Theorem: given
the factorization system $(\widehat{\E}, \widehat{\M})$ on the pro-completion of $\Df$, we have the concept of an equation in $\Pro\Df$.
We call it a profinite equation for $\D$, and prove that pseudovarieties in $\D$ are precisely the classes in $\Df$ that can be presented by profinite equations.

\begin{defn}
  A \emph{profinite equation} is an equation in $\Pro\Df$, i.e.~a morphism
  $e\colon X \epito E$ in $\widehat{\E}$ whose domain $X$ is $\hatE$-projective. It is \emph{satisfied} by a finite object
  $D$ provided that $D$ is injective w.r.t.\ $e$. 
\end{defn}

\begin{thm}[Generalized Reiterman Theorem]\label{thm:reiterman}
  Given a profinite factorization system on $\D$, a class of finite objects is a
  pseudovariety iff it can be presented by profinite equations. 
\end{thm}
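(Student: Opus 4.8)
The plan is to deduce the theorem from \Cref{prop:pseudovariety=pseudoequations} --- which already identifies pseudovarieties with the classes of finite objects presented by \emph{pseudoequations} in $\D$ --- by translating a pseudoequation over an $\E$-projective object $X$ of $\D$ into a profinite equation over the $\widehat\E$-projective object $\widehat X$ of $\Pro\Df$, and vice versa. Two facts are used throughout. First, $\E_\f\subseteq\widehat\E$ and $\M_\f\subseteq\widehat\M$: a single morphism of $\Df$ is the limit in $(\Pro\Df)^\to$ of the one-object (hence cofiltered) diagram on it. Second, the embedding $\Df\monoto\Pro\Df$ preserves finite limits (a standard property of pro-completions of finitely complete categories), so a product of finite objects formed in $\Df$ is also their product in $\Pro\Df$.

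\emph{Presented by profinite equations $\Rightarrow$ pseudovariety.} Since an intersection of pseudovarieties is a pseudovariety and the finite models of a family of profinite equations are the intersection of the models of its members, it suffices to treat a single profinite equation $e\colon X\epito E$ (so $e\in\widehat\E$ and $X$ is $\widehat\E$-projective in $\Pro\Df$) and show that the class $\mathscr{V}\subseteq\Df$ of its finite models is closed under finite products, subobjects and quotients. All three are formal consequences of injectivity: for $A,B\in\mathscr{V}$ and $g\colon X\to A\times B$ in $\Pro\Df$, factor $\pi_A\o g$ and $\pi_B\o g$ through $e$ and pair the factorisations; for $m\colon A\monoto B$ in $\M_\f$ with $B\in\mathscr{V}$ and $g\colon X\to A$, factor $m\o g$ through $e$ and use the diagonal fill-in of the factorisation system $(\widehat\E,\widehat\M)$ (\Cref{prop:factorization-of-proD}) on the resulting square, which has $e\in\widehat\E$ on one side and $m\in\widehat\M$ on the other; for $q\colon B\epito A$ in $\E_\f$ with $B\in\mathscr{V}$ and $g\colon X\to A$, lift $g$ along the $\widehat\E$-morphism $q$ using $\widehat\E$-projectivity of $X$ and factor the lift through $e$. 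The terminal object of $\Df$ satisfies $e$ trivially. Hence $\mathscr{V}$ is a pseudovariety.

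\emph{Pseudovariety $\Rightarrow$ presented by profinite equations.} By \Cref{prop:pseudovariety=pseudoequations} a pseudovariety $\mathscr{V}$ is presented by a family of pseudoequations $\rho_X$; as these are produced in the proof of that proposition, we may assume every member $q\colon X\epito B_q$ of $\rho_X$ has \emph{finite} codomain. By \Cref{ex:profinite}\ref{itm:ex:profinite-1} each $\widehat q\colon\widehat X\epito B_q$ lies in $\widehat\E$; moreover the $B_q$ form a cofiltered diagram in $\Df$ --- indexed by $\rho_X$ ordered by factorisation, which is up-directed because $\rho_X$ is closed under joins --- whose connecting morphisms lie in $\E_\f$. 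Let $e\colon\widehat X\epito E$ be the unique factorisation of the cone $(\widehat q)_q$ through the limit $E$ of this diagram in $\Pro\Df$. Then $e\in\widehat\E$ by \Cref{ex:profinite}\ref{itm:ex:profinite-3}, and $\widehat X$ is $\widehat\E$-projective by \Cref{prop:prop-of-profinite-fact-system}, so $e$ is a profinite equation. I claim a finite object $D$ satisfies $\rho_X$ iff it satisfies $e$; then replacing every $\rho_X$ by the corresponding $e$ presents $\mathscr{V}$ by profinite equations. For the claim, the adjunction $\widehat{(\dash)}\dashv V$ (\Cref{lem:the-left-adjoint-to-V}) together with $VD=D$ gives a bijection between morphisms $h\colon X\to D$ in $\D$ and morphisms $\widehat h\colon\widehat X\to D$ in $\Pro\Df$ under which, since $B_q$ and $D$ are finite, $h$ factors through $q$ iff $\widehat h$ factors through $\widehat q$. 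Thus $D$ satisfies $\rho_X$ iff every $g\colon\widehat X\to D$ factors through some $\widehat q$. Finally $g$ factors through some $\widehat q$ iff $g$ factors through $e$: the direction $\Leftarrow$ holds since $\widehat q=\pi_q\o e$ for the limit projection $\pi_q\colon E\to B_q$, and $\Rightarrow$ holds because $D$ is finitely copresentable in $\Pro\Df$, so any $\overline g\colon E\to D$ with $g=\overline g\o e$ factors through some $\pi_q$, whence $g$ factors through $\widehat q$.

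I expect the main obstacle to be the bookkeeping in the second direction: identifying the correct cofiltered diagram of the $B_q$ and checking its connecting morphisms are genuine quotients, so that \Cref{ex:profinite}\ref{itm:ex:profinite-3} really places $e$ in $\widehat\E$; and verifying that the transpose $h\mapsto\widehat h$ preserves and reflects factorisation through a finite quotient, so that satisfaction of $\rho_X$ and of $e$ coincide on finite objects.
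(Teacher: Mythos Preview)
Your proposal is correct and follows essentially the same route as the paper: reduce to pseudoequations via \Cref{prop:pseudovariety=pseudoequations}, form the profinite equation $e\colon\widehat X\epito E$ as the factorisation of the cone $(\widehat q)$ through the cofiltered limit of the finite quotients, verify $e\in\widehat\E$ via \Cref{ex:profinite}, and compare satisfaction of $\rho_X$ and of $e$ using the adjunction $\widehat{(\dash)}\dashv V$ together with finite copresentability of $D$. One small slip: in your final sentence the labels $\Leftarrow$ and $\Rightarrow$ are interchanged --- the identity $\widehat q=\pi_q\o e$ gives ``factors through some $\widehat q$ $\Rightarrow$ factors through $e$'', while finite copresentability of $D$ gives the converse --- but the arguments themselves are correct.
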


\begin{proof}
  Every class $\V \subseteq \Df$ presented by profinite equations is a
  pseudovariety: this is proved precisely
  as~\ref{itm:prop:pseudovariety=pseudoequations-1} in
  \Cref{prop:pseudovariety=pseudoequations}. 

  Conversely, every pseudovariety can be presented by profinite equations.
  Indeed, following the same proposition, it suffices to construct, for every
  pseudoequation $e_i\colon X \epito E_i$ ($i \in I$), a profinite equation
  satisfied by the same finite objects. 

  For every $i \in I$, we have the corresponding limit projection 
  \[
    \widehat{e}_i \colon \widehat{X} \epito E_i
    \quad
    \text{with $e_i = V\widehat{e}_i \o \eta_X$}.
  \]
  Let $R$ be the diagram in $\Df$ of objects $E_i$. The connecting morphism $k\colon E_i\to E_j$ are given by the factorization
  \[
    \xymatrix@C-1em{
      {} & X \ar@{->>}[rd]^{e_j} \ar@{->>}[ld]_{e_i} \\
    E_i \ar@{->>}[rr]_{k} & & E_j 
    }
  \]
  iff $e_j \leq e_i$. Since the pseudoequation is closed under finite
  joins, $R$ is cofiltered. Form the limit of $R$ in $\Pro{\Df}$ with the
  limit cone
  \[
    p_i \colon E \epito E_i \quad (i \in I). 
  \]
  The morphisms $\widehat{e}_i$ above form a cone of $R$: given $e_j = k \o
  e_i$, then $V\widehat{e}_j \o \eta_X = V(\widehat{k\o e_i})\o\eta_X = k\o
  V\widehat{e}_i \o \eta_X$ implies $\widehat{e}_j = k \o \widehat{e}_i$. Here we apply the
  universal property of $\eta_X$: the morphism $\widehat{e}_j$ is uniquely determined by $V\widehat{e}_j\o \eta_X$. Thus we have a unique morphism $e\colon
  \widehat{X} \epito E$ making the following triangles commutative:
  \[
    \vcenter{
    \xymatrix{
      \widehat{X} \ar@{->>}[r]^{e} \ar@{->>}[rd]_{\widehat{e}_i} & E\ar@{->>}[d]^{p_i} \\
                  & E_i
              }}
              \qquad
              (i\in I)
  \]
  The connecting morphisms of $R$ lie in $\E$ (since $k\o e_i \in \E$ implies
  $k\in \E$). Thus
 each $\widehat{e}_i$ lies in $\hatE$ since $e_i\in\E$, see \Cref{ex:profinite}\ref{itm:ex:profinite-2}.
 Therefore, $e \in \hatE$ by \Cref{ex:profinite}\ref{itm:ex:profinite-3}. Since $\widehat X$ is $\hatE$-projective by \Cref{prop:prop-of-profinite-fact-system}, we have thus obtained a profinite equation $e\colon \widehat{X} \epito E$.

  We are going to prove that a finite object $A$ satisfies the pseudoequation $(e_i)_{i\in I}$ iff it satisfies the profinite equation $e$.
  \begin{enumerate}
    \item Let $A$ satisfy the pseudoequation $(e_i)$. For every morphism
      $f\colon \widehat{X} \to A$ we present a factorization through $e$. The
      morphism $Vf \o \eta_X$ factorizes through some $e_j$, $j\in I$:
      \[
        \xymatrix{
          X \ar[r]^{\eta_{X}} \ar@{->>}[d]_{e_j} & V \widehat{X}\ar[d]^{Vf} \\
          E_{j} \ar[r]_{g} & A
        }
      \]
      Since $e_j = V\widehat{e}_j\o\eta_X$, we get $V(g\o\widehat{e}_j)\o \eta_X
      = Vf \o \eta_X$. By the universal property of $\eta_X$ this implies 
      \[
        g\o\widehat{e}_j = f.
      \]
      The desired factorization is $g\o p_j$:
      \[
        \xymatrix{
          \widehat{X}
          \ar[d]_{f}
          \ar@{->>}[r]^-{e}
          \ar@{->>}[rd]|-*+{\labelstyle \widehat{e}_j}
          &
          E \ar@{->>}[d]^{p_j}
          \\
          A & E_j
          \ar[l]^{g}
        }
      \]

    \item Let $A$ satisfy the profinite equation $e$. For every morphism
      $h\colon X \to A$ we find a factorization through some $e_j$. The morphism
      $\widehat{h}\colon\widehat{X} \to A$ factorizes through $e$:
      \[
        \widehat{h} = u \o e
        \quad
        \text{with $u\colon E \to A$}.
      \]
  The codomain of $u$ is finite, thus, $u$ factorizes through one of the
      limit projection of $E$, i.e.
      \[
        u = v \o p_j
        \quad\text{with $j\in I$ and $v \colon E_j \to A$}.
      \]
      This gives the following commutative diagram:
      \begin{equation}\label{diag:u}
        \vcenter{
        \xymatrix{
          \widehat{X} \ar[d]_{\widehat{h}} \ar[r]^{e} & E \ar[d]^{p_j} 
          \ar@{-->}[ld]_{u} \\
          A & E_j \ar[l]^{v}
        }}
      \end{equation}
      That $v$ is the desired factorization of $h$ is now shown using
      the following diagram:
      \[
        \xymatrix{
          X \ar `d[dr] [rrrd]_-h \ar[r]^{\eta_X}
          &
          V\widehat{X}
          \ar[rrd]_(.4){V\widehat{h}}\ar[r]^{Ve}
          &
          VE \ar[r]^-{Vp_j}
          &
          E_j \ar[d]^{v}
          \ar@{<<-} `u[l] `[lll]_-{e_j} [lll]
          \\
          &&&
          A
        }                                         
      \]
      Indeed, the upper part commutes since since
      \[
        e_j = V\widehat{e}_j \o \eta_X = Vp_j \o Ve \o \eta_X,
      \]
      the lower left-hand part commutes since $h = V\widehat{h}\o
      \eta_X$, and for the remaining lower right-hand part apply $V$
      to~\eqref{diag:u} and use that $Vv = v$ since $v$ lies in $\Df$.\qedhere
  \end{enumerate}
\end{proof}


\section{Profinite Monad}\label{sec:profinite-monad}

In the present section we establish the main result of our paper: a generalization of Reiterman's theorem from algebras over a signature to algebras for a given monad $\MT$ in a category $\D$
(\Cref{thm:reiterman-for-monads}). To this end, we introduce and investigate the \emph{profinite monad} $\hatT$ associated to the monad $\MT$. It provides an abstract perspective on the formation of spaces of profinite words or profinite terms and serves as key technical tool for our categorical approach to profinite algebras.  
\begin{assumption}
  Throughout this section, $\D$ is a category satisfying
  \Cref{assum:factorization-system}, and $\MT=(T,\mu,\eta)$ is a
  monad on $\D$ preserving quotients, i.e.~$T(\E)\seq \E$.
\end{assumption}
We denote by $\D^\MT$ the category of $\MT$-algebras and
$\MT$-homomorphisms, and by $\Df^\MT$ the full subcategory of all
\emph{finite algebras}, i.e.~$\MT$-algebras whose underlying object
lies in $\Df$.
\begin{remark}\label{rem:dtsatassumptions}
  The category $\D^{\MT}$ satisfies
  \Cref{assum:factorization-system}. More precisely:
  \begin{enumerate}
    \item Since $\MT$ preserves quotients, the factorization system of $\D$
      lifts to $\D^\MT$: every homomorphism in $\D^\MT$ factorizes as a
      homomorphism in $\E$ followed by one in $\M$. When speaking about
      \emph{quotient algebras} and \emph{subalgebras} of $\MT$-algebras, we refer to this
      lifted factorization system $(\E^\MT, \M^\MT)$. 
      
    \item Since $\D$ is complete, so is $\D^\MT$ with limits created by the forgetful functor into $\D$.

    \item The category $\Df^\MT$ is closed under finite products and
      subalgebras, since $\Df$ is closed under finite products and
      subobjects.

    \item For every $\E$-projective object $X$, the free algebra $(TX, \mu_X)$ is
        $\E^\MT$-projective. Indeed, given $\MT$-homomorphisms $e\colon (A,\alpha)\epito (B,\beta)$ and $h\colon (TX,\mu_X)\to (B,\beta)$ with $e\in \E$,  then $h \o \eta_{ X} \colon X \to B$ 
  factorizes through $e$ in $\D$, i.e.~$h \o \eta_{ X} = e\o k_0$ for some $k_0$. Then the
  $\MT$-homomorphism $k\colon (T{X}, \mu_X) \to (A, \alpha)$
  extending $k_0$ fulfils $e \o k \o \eta_{{X}} =
  h \o \eta_{{X}}$, hence, $e \o k = h$ by the universal property
  of $\eta_{{X}}$.
 \[
    \xymatrix{
      {X} \ar[r]^-{\eta_{{X}}} \ar@{-->}[d]_{k_0} 
      & (T X, \mu_X) \ar@{-->}[ld]_{k} \ar[d]^{h} \\
    (A, \alpha) \ar@{->>}[r]_-{e} & (B, \beta)
    }
  \] 
  It follows that every finite algebra is a quotient of an
  $\E^\MT$-projective $\MT$-algebra.
  \end{enumerate}
\end{remark}

\begin{notation}
  The forgetful functor of $\Df^\MT$ into $\Pro\Df$ is denoted by
  \[
    K \colon \Df^\MT \to \Pro\Df
  \]
  For example, if $\D = \SigStr$, then $K$ assigns to every finite $\MT$-algebra
  its underlying $\Sigma$-structure, equipped with the discrete topology.
\end{notation}

\begin{remark}\label{rem:kanext}
  For any functor $K\colon \A\to \Cat$, the right Kan extension
  \[R=\Ran_K K\colon \Cat\to\Cat\] can be naturally equipped
  with the structure of a monad. Its unit and multiplication are given
  by
  \[
    \hateta=(\id_K)^\dagger\colon \Id\to R
    \qquad\text{and}\qquad
    \hatmu = (\epsilon\o R\epsilon)^\dagger\colon RR\to R,
  \]
  where $\epsilon\colon RK\to K$ denotes the universal natural
  transformation and $(\dash)^\dagger$ is defined as in
  \Cref{rem:kanext0}. The monad $(R,\hateta,\hatmu)$ is called the
  \emph{codensity monad} of $K$, see e.g.~Linton~\cite{Linton1969}.
\end{remark}

\begin{defn}\label{def:profinitemonad}
  The \emph{profinite monad} 
\[\hatT = (\hatt,\hatmu,\hateta)\] of the monad $\MT$ is the
  codensity monad of the forgetful functor $K\colon \Df^\MT \to \Pro\Df$. 
\end{defn}

\begin{construction}\label{cons:profinite-monad}
  Since $\Pro{\Df}$ is complete and $\Df^\MT$ is small, the limit
  formula for right Kan extensions (see \Cref{rem:kanext0}) yields
  the following concrete description of the profinite monad:
  \begin{enumerate}
  \item\label{itm:cons:profinite-monad-a} To define the action of
    $\hatt$ on an object $X$, form the coslice category $X/K$ of all
    morphisms $a\colon X \to K(A, \alpha)$ with
    $(A, \alpha)\in \Df^\MT$. The projection functor
    $Q_X\colon X/K \to \Pro\Df$, mapping $a$ to $A$, has a limit
      \[
        \hatt X = \lim Q_X.
      \]
      The limit cone is denoted as follows:
      \[
        \frac{X \xto{a} K(A, \alpha)}{\hatt X \xto{\alpha_a^+} A}
      \]
      For every finite $\MT$-algebra $(A, \alpha)$, we write 
      \[
        \alpha^+\colon \hatt A \to A
      \]
      instead of $\alpha^+_{\id_A}$. 

    \item\label{itm:cons:profinite-monad-b}
      The action of $\hatt$ on morphisms $f\colon Y \to X$ is given by the
      following commutative triangles 
      \[
        \vcenter{
        \xymatrix@C-1em{
          \hatt Y \ar[rr]^{\hatt f}\ar[rd]_{\alpha_{af}^+} & & \hatt X
          \ar[ld]^{\alpha_a^+} \\
                  & A 
              }}
        \quad
        \text{for all $a\colon X \to K(A, \alpha)$.} 
      \]
      
    \item\label{itm:cons:profinite-monad-c} The unit $\hateta\colon \Id \to
      \hatt$ is given by the following commutative triangles
      \[
        \vcenter{
          \xymatrix@C-1em{
            X \ar[rr]^{\hateta_X} \ar[rd]_{a} && \hatt X \ar[ld]^{\alpha_a^+} \\
              & A 
          }
        }
        \quad\text{for all $a\colon X \to K(A, \alpha)$.}
      \]
      and the multiplication by the following commutative squares 
      \[
        \vcenter{
        \xymatrix{
          \hatt\hatt X \ar[r]^{\hatmu_X} \ar[d]_{\hatt\alpha_a^+} & \hatt X
          \ar[d]^{\alpha_a^+} \\
        \hatt A \ar[r]_{\alpha^+} & A 
    }}
      \quad\text{for all $a\colon X \to K(A, \alpha)$.}
      \] 
  \end{enumerate}
\end{construction}

\begin{remark}\label{rem:codensitymonad}
  A concept related to the profinite monad was studied by
  Boja\'nczyk~\cite{Bojanczyk2015} who associates to every monad $\MT$
  on $\Set$ a monad $\overline \MT$ on $\Set$ (rather than on
  $\Pro\Set_\mathsf{f}=\Stone$ as in our setting).  Specifically,
  $\overline \MT$ is the monad induced by the composite right adjoint
  $\Stone^{\hatT} \to \Stone \xto{V} \Set$. Its construction also
  appears in the work of Kennison and Gildenhuys~\cite{Kennison1971}
  who investigated codensity monads for $\Set$-valued functors and
  their connection with profinite algebras.
\end{remark}

\begin{remark}\label{rem:hattprops}
\begin{enumerate}
\item\label{itm:cons:profinite-monad-d} Every finite $\MT$-algebra
  $(A, \alpha)$ yields a finite $\hatT$-algebra $(A,
  \alpha^+)$. Indeed, the unit law and the associative law for
  $\alpha^+$ follow from
  \Cref{cons:profinite-monad}\ref{itm:cons:profinite-monad-c} with
  $X = A$ and $a = \id_A$.
  
\item\label{itm:cons:profinite-monad-e} The monad $\hatt$ is cofinitary. To see
  this, let $x_i\colon X\to X_i$ ($i\in I$) be a cofiltered limit cone
  in $\Pro \Df$. For each object of $X/K$ given by an algebra
  $(A,\alpha)$ and morphism $a\colon X\to A$, due to $A\in \Df$ there
  exists $i\in I$ and a morphism $b\colon X_i\to A$ with $a=b\o
  x_i$. From the definition of $\hatt$ on morphisms we get
  \[
    \alpha_a^+ = (\,\hatt X \xrightarrow{~\hatt x_i~} \hatt X_i
    \xrightarrow{~\alpha_b^+~} B\,).
  \]
  To prove that $\hatt x_i\colon \hatt X \to \hatt X_i$ ($i\in I$)
  forms a limit cone, suppose that any cone $c_i\colon C\to \hatt X_i$
  ($i\in I$) is given.  It is easy to verify that then the cone of
  $Q_X$ (see
  \Cref{cons:profinite-monad}\ref{itm:cons:profinite-monad-a})
  assigning to the above $a$ the morphism $\alpha_b^+\o c_i$ is
  well-defined, i.e.~independent of the choice of $i$ and $b$ and
  compatible with $Q_X$.  The unique morphism $c\colon C\to \hatt X$
  factorizing that cone fulfils $c_i = \hatt x_i\o c$ because this
  equation holds when postcomposed with the members of the limit cone
  of $Q_{X_i}$.  This proves the claim.

\item \label{itm:cons:profinite-monad-f} The free $\hatT$-algebra
  $(\hatt X, \hatmu_X)$ on an object $X$ of $\Pro{\Df}$ is a
  cofiltered limit of finite $\hatT$-algebras.  In fact, for the
  squares in
  \Cref{cons:profinite-monad}\ref{itm:cons:profinite-monad-c}
  defining $\hatmu_X$ we have the limit cone $(\alpha_a^+)$ in
  $\Pro{\Df}$, and since all $\alpha_a^+$ are homomorphisms of
  $\hatT$-algebras and the forgetful functor from $(\Pro{\Df})^\hatT$
  to $\Pro{\Df}$ reflects limits, it follows that
  $(\hatt X, \hatmu_X)$ is a limit of the algebras $(A,\alpha^+)$.

\item For ``free'' objects of $\Pro \Df$, i.e.~those of the form
  $\hat X$ for $X\in \D$ (cf.~\Cref{lem:the-left-adjoint-to-V}),
  the definition of $\hatt \hatX$ can be stated in a more convenient
  form: $\hatt \hatX$ is the cofiltered limit of all finite quotient
  algebras of the free $\MT$-algebra $(TX,\mu_X)$. More precisely, let
  $(TX,\mu_X)\mathord{\epidownarrow}\Df^\MT$ denote the full
  subcategory of the slice category $(TX,\mu_X) / \Df^\MT$ on all
  finite quotient algebras of $(TX,\mu_X)$, and consider the diagram
\[ D_X\colon (TX,\mu_X)\mathord{\epidownarrow}\Df^\MT \to \Pro{\Df} \]
that maps $e\colon (TX,\mu_X) \epito (A,\alpha)$ to $A$.
Then we have the following
\end{enumerate}
\end{remark} 
\begin{lem}\label{lem:hattX-is-limit-of-finite-quotients}
  For every object $X$ of $\D$, one has $\hatt \hatX = \lim D_X$. 
\end{lem}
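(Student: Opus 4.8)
The plan is to recognise the limit defining $\hatt\hatX$ as a limit over the category of finite quotient algebras of the free algebra $(TX,\mu_X)$. By \Cref{cons:profinite-monad}\ref{itm:cons:profinite-monad-a}, $\hatt\hatX=\lim Q_{\hatX}$, where $Q_{\hatX}\colon\hatX/K\to\Pro\Df$ sends $a\colon\hatX\to K(A,\alpha)$ to $A$. First I would apply the adjunction $\widehat{(\dash)}\dashv V$ of \Cref{lem:the-left-adjoint-to-V}: since the restriction of $V$ to $\Df$ is the inclusion $\Df\monoto\D$, a morphism $a\colon\hatX\to K(A,\alpha)$ in $\Pro\Df$ corresponds bijectively, naturally in $(A,\alpha)\in\Df^\MT$, to a morphism $X\to A$ in $\D$, and hence by the universal property of $(TX,\mu_X)$ to an $\MT$-homomorphism $(TX,\mu_X)\to(A,\alpha)$. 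This yields an isomorphism of categories $\hatX/K\cong(TX,\mu_X)/\Df^\MT$ under which $Q_{\hatX}$ becomes the projection $P\colon(TX,\mu_X)/\Df^\MT\to\Pro\Df$ sending a homomorphism $h\colon(TX,\mu_X)\to(A,\alpha)$ to $A$; thus $\hatt\hatX=\lim P$. Since $D_X$ is the restriction of $P$ along the inclusion functor $\iota\colon(TX,\mu_X)\mathord{\epidownarrow}\Df^\MT\hookrightarrow(TX,\mu_X)/\Df^\MT$, it suffices to prove $\lim P=\lim(P\o\iota)$.

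I would obtain this by showing that $\iota$ is an \emph{initial functor} in the sense of \Cref{ex:profinite}. For condition~(a), given a homomorphism $h\colon(TX,\mu_X)\to(A,\alpha)$, take its $(\E^\MT,\M^\MT)$-factorization $h=m\o e$ (\Cref{rem:dtsatassumptions}); the image is a subalgebra of the finite algebra $(A,\alpha)$, hence lies in $\Df^\MT$ because $\Df$ is closed under subobjects, so $e$ is an object of $(TX,\mu_X)\mathord{\epidownarrow}\Df^\MT$ and $m$ is a morphism from $\iota(e)$ to $h$ in $(TX,\mu_X)/\Df^\MT$. For condition~(b), given two morphisms $f_i\colon\iota(e_i)\to h$ ($i=1,2$), where $e_i\colon(TX,\mu_X)\epito(B_i,\beta_i)$ are finite quotients and $f_i$ is a homomorphism $k_i\colon(B_i,\beta_i)\to(A,\alpha)$ with $k_i\o e_i=h$, form the join $e=e_1\vee e_2\colon(TX,\mu_X)\epito(B,\beta)$; its codomain is a subalgebra of the finite product $(B_1,\beta_1)\times(B_2,\beta_2)$, hence finite by \Cref{rem:dtsatassumptions}, so $e$ again lies in $(TX,\mu_X)\mathord{\epidownarrow}\Df^\MT$, and the two product projections give morphisms $g_i\colon\iota(e)\to\iota(e_i)$ with $g_i\o e=e_i$. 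Then $k_1\o g_1=k_2\o g_2$, since both sides compose with the epimorphism $e$ to $h$. By \Cref{ex:profinite}, initiality of $\iota$ gives $\lim P=\lim(P\o\iota)$, whence $\hatt\hatX=\lim Q_{\hatX}=\lim P=\lim D_X$.

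I do not expect a genuine obstacle here. The one point requiring care is the bookkeeping behind the isomorphism $\hatX/K\cong(TX,\mu_X)/\Df^\MT$: one must check that it sends $Q_{\hatX}$ to $P$ exactly, which amounts to unwinding the adjunction transpose together with the description of $K$ as post-composition of the forgetful functor $\Df^\MT\to\Df$ with the embedding $\Df\monoto\Pro\Df$, and to noting that $V$ acts as the identity on finite objects.
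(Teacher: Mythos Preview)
Your proposal is correct and follows essentially the same approach as the paper: identify $\hatX/K\cong (TX,\mu_X)/\Df^\MT$ via the two adjunctions, then observe that the inclusion of the full subcategory of finite \emph{quotient} algebras is initial because every $\MT$-homomorphism admits an $(\E^\MT,\M^\MT)$-factorization. The paper states the initiality in one line, while you spell out both conditions (a) and (b) of \Cref{ex:profinite} explicitly using the factorization and the join of quotients; this extra detail is sound and the bookkeeping concern you flag in your last paragraph is exactly the content of the paper's displayed composite.
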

\begin{proof}
  The diagram $D_X$ is the composite
  \[
    (TX,\mu_X)\mathord{\epidownarrow}\Df^\MT \monoto
    (TX,\mu_X)/\Df^\MT \cong \widehat{X}/K \xto{Q_{\hat X}}
    \Pro{\Df},
  \]
  where the isomorphism $(TX,\mu_X)/\Df^\MT \cong \widehat{X}/K$ maps
  $e\colon (TX,\mu_X) \to (A,\alpha)$ to
  $\widehat{e\o \eta_X}\colon \widehat X\to A$. Since every
  $\MT$-homomorphism has an $(\E^\MT,\M^\MT)$-factorization,
  $(TX,\mu_X)\mathord{\epidownarrow}\Df^\MT$ is an initial subcategory
  of $(TX,\mu_X)/\Df^\MT$. Thus, $\hatt X = \lim Q_{\hat X} = \lim D$.
\end{proof}
\begin{notation}\label{not:abuselimitprojections}
  The above proof gives, for every object $X\in \D$, the limit cone
  $\alpha_{\widehat{e\o \eta_X}}^+\colon \hatt \hatX\epito A$ with
  $e\colon (TX,\mu_X)\epito (A,\alpha)$ ranging over
  $(TX,\mu_X)\mathord{\epidownarrow}\Df^\MT$. In the following, we
  abuse notation and simply write $\alpha_e^+$ for
  $\alpha_{\widehat{e\o \eta_X}}^+$.
\end{notation}

\begin{expl}\label{ex:profinitewords}
  Given the monad $TX = X^*$ of monoids on $\D = \Set$, the profinite monad is the
  monad of monoids in $\Stone$ 
  \[
    \hatt X = \text{free monoid in $\Stone$ on the space $X$}.
  \]
  For a finite set $X$, the elements of $\hatt X$ are called the \emph{profinite
  words} over $X$. A profinite word is a compatible choice of a congruence class of
  $X^*/{\sim}$ for every congruence $\sim$ of finite rank. Compatibility
  means that given another congruence $\approx$ containing $\sim$, 
  the class chosen for $\approx$ contains the above class as a subset.
\end{expl}

\begin{lem}\label{coro:hatT-preserves-quotients}
  The monad $\hatt$ preserves quotients,
  i.e.~$\hatt(\widehat{\E}) \subseteq \widehat{\E}$.
\end{lem}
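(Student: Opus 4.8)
The plan is to reduce the statement, using that $\hatt$ is cofinitary, to the case of a \emph{finite} quotient $q\in\E_\f$, and then to recognise $\hatt q$ as the canonical comparison morphism between two nested cofiltered limits of finite quotient algebras, at which point \Cref{ex:profinite}\ref{itm:ex:profinite-3} finishes the job. For the reduction: let $e\colon X\to Y$ be an arbitrary morphism of $\widehat{\E}$, i.e.\ a cofiltered limit $e=\lim_i e_i$ of morphisms $e_i\in\E_\f$ in $(\Pro\Df)^\to$. The endofunctor $\hatt$ induces an endofunctor of the arrow category sending $f$ to $\hatt f$; since cofiltered limits in $(\Pro\Df)^\to$ are formed componentwise and $\hatt$ is cofinitary (\Cref{rem:hattprops}\ref{itm:cons:profinite-monad-e}), this induced endofunctor is cofinitary as well, so $\hatt e=\lim_i\hatt e_i$ in $(\Pro\Df)^\to$. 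Because $\widehat{\E}$ is closed under cofiltered limits in $(\Pro\Df)^\to$ (a cofiltered limit of cofiltered limits of $\E_\f$-morphisms is again one, by the standard properties of pro-completions), it suffices to show $\hatt q\in\widehat{\E}$ for every $q\colon A\epito B$ in $\E_\f$.

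So fix $q\colon A\epito B$ in $\E_\f$. Since $A,B\in\Df$ we have $\widehat A=A$ and $\widehat B=B$, so \Cref{lem:hattX-is-limit-of-finite-quotients} gives $\hatt A=\lim D_A$ and $\hatt B=\lim D_B$, where $D_A$ (resp.\ $D_B$) ranges over the finite quotient algebras of $(TA,\mu_A)$ (resp.\ $(TB,\mu_B)$), with limit cones the morphisms $\gamma_g^+$ of \Cref{not:abuselimitprojections}. The connecting morphisms of $D_A$ are $\MT$-homomorphisms between quotient algebras of $(TA,\mu_A)$, hence quotients between finite objects, i.e.\ lie in $\E_\f$; therefore every limit projection $\gamma_g^+\colon\hatt A\epito C$ lies in $\widehat{\E}$ by \Cref{ex:profinite}\ref{itm:ex:profinite-2}. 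Next, $Tq\colon(TA,\mu_A)\to(TB,\mu_B)$ lies in $\E^\MT$ because $T(\E)\seq\E$, so precomposition $f\mapsto f\o Tq$ exhibits $D_B$ as (isomorphic to) the full subdiagram $D'\seq D_A$ of those finite quotient algebras of $(TA,\mu_A)$ that factor through $Tq$ — equivalently, the down-set of the quotient $Tq$. As $D'$ is closed under finite joins it is cofiltered, and $\hatt B=\lim D'$. Using \Cref{cons:profinite-monad}\ref{itm:cons:profinite-monad-b}, the naturality identity $Tq\o\eta_A=\eta_B\o q$, and \Cref{not:abuselimitprojections}, one computes $\gamma_f^+\o\hatt q=\gamma_{f\o Tq}^+$ for every $f$ in $D_B$; this says precisely that $\hatt q$ is the canonical comparison morphism $\lim D_A\to\lim D'$ induced by the inclusion $D'\hookrightarrow D_A$. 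The restriction to $D'$ of the limit cone of $\hatt A=\lim D_A$ is a cone of $D'$ all of whose legs $\gamma_g^+$ lie in $\widehat{\E}$, so \Cref{ex:profinite}\ref{itm:ex:profinite-3} shows that this comparison morphism, namely $\hatt q\colon\hatt A\to\hatt B$, lies in $\widehat{\E}$.

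The only genuinely non-formal step is the identification in the second paragraph: verifying that $D_B$ embeds into $D_A$ exactly as the down-set of the quotient $Tq$, and that $\hatt q$ is the comparison of the two limits. This amounts to careful bookkeeping with the definition of $\hatt$ on morphisms and the abuse of notation in \Cref{not:abuselimitprojections}; everything else is formal. In particular, no topological argument is needed, since all the relevant behaviour of cofiltered limits of quotients has already been packaged into \Cref{ex:profinite}\ref{itm:ex:profinite-2},\ref{itm:ex:profinite-3}.
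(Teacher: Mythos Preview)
Your proof is correct. The reduction in your first paragraph---using cofinitarity of $\hatt$ to write $\hatt e$ as a cofiltered limit of the $\hatt e_i$ in $(\Pro\Df)^\to$ and then invoking closure of $\widehat{\E}$ under cofiltered limits in the arrow category---is exactly the paper's opening move. Where you diverge is in the treatment of the base case $q\in\E_\f$.

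At that point the paper simply asserts ``$\hatt e_i = Te_i$'' and concludes from $T(\E)\subseteq\E$. Taken literally this identity is false: for $e_i\colon X_i\to Y_i$ in $\Df$ the morphism $\hatt e_i$ has domain $\hatt X_i$ and codomain $\hatt Y_i$, which in general differ from $TX_i$, $TY_i$ (e.g.\ for the monoid monad on $\Set$, $\hatt$ of a finite set is the Stone space of profinite words, not the free monoid). So the paper's printed argument has a gap precisely at the step you take seriously.

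Your second paragraph supplies a genuine proof of the base case by a different route: you realise $\hatt q$ as the comparison map $\lim D_A\to\lim D_B$ after identifying $D_B$, via $f\mapsto f\circ Tq$, with the cofiltered subdiagram of $D_A$ consisting of quotients factoring through $Tq$ (using that $Tq\in\E^\MT$ is epic, so the factor is automatically a $\MT$-homomorphism in $\E$). The projections of $\hatt A=\lim D_A$ onto this subdiagram lie in $\widehat\E$ by \Cref{ex:profinite}\ref{itm:ex:profinite-2}, whence \Cref{ex:profinite}\ref{itm:ex:profinite-3} yields $\hatt q\in\widehat\E$. This is clean and correct; in effect you are proving what the paper's proof tacitly assumes.

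One small remark: your appeal to ``$\widehat\E$ is closed under cofiltered limits in $(\Pro\Df)^\to$'' is precisely the mechanism already underlying \Cref{ex:profinite}\ref{itm:ex:profinite-3} (whose justification in the paper is the same iterated-cofiltered-limit observation), so you are not importing anything the paper does not itself rely on.
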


\begin{proof}
Suppose that $e\colon X\to Y$ is a morphism im $\hatE$. This means that it can be expressed as a cofiltered limit in $\hatD^\to$ of morphisms $e_i\in \E_\f$ ($i\in I$):
\[  
\xymatrix{
X \ar@{->>}[r]^e \ar[d]_{p_i} & Y \ar[d]^{q_i} \\
X_i \ar@{->>}[r]_{e_i}& Y_i
}
\]
Since $\hatt$ is cofinitary by \Cref{rem:hattprops}\ref{itm:cons:profinite-monad-e}, it follows that $\hatt e$ is the limit of $\hatt e_i = Te_i$ ($i\in I$) in $\hatD^\to$. Since $T$ preserves $\E$, we have $Te_i\in \E$ for all $i\in I$, which proves that $\hatt e\in \hatE$. 
\end{proof}

It follows that the factorization system
$(\widehat{\E}, \widehat{\M})$ of $\Pro\Df$ lifts to the category
$(\Pro{\Df})^{\hatT}$. Moreover, this category with the choice
\[
  (\Pro\Df)^{\hatT}_\f = \text{all $\hatT$-algebras $(A, \alpha)$ with $A \in \Df$}
\]
satisfies all the requirements of
\Cref{assum:factorization-system}; this is analogous to the
corresponding observations for $\D^\MT$ in
\Cref{rem:dtsatassumptions}. Note that we are ultimately interested
in finite $\MT$-algebras, not finite $\hatT$-algebras. However, there
is no clash: we shall prove in
\Cref{prop:finite-T-algbras-are-hatT-algebras} that they coincide.

\begin{notation}
  Recall from \Cref{cons:profinite-monad} the definition of $\hatt X$ as a cofiltered limit $\alpha_a^+\colon
  \hatt X \to A$ of $Q_X\colon X/K\to \Pro{\Df}$. Since the functor $V\colon \Pro\Df \to \D$  (see \Cref{not:v}) preserves that limit,
  and since all morphisms 
  \[
    TVX \xto{TVa} TA \xto{\alpha} A
  \]
   form a cone of $V \o Q_X$, there is a unique  morphism $\varphi_X$ such the
   squares below commute for every finite $\MT$-algebra $(A,\alpha)$:
   \begin{equation}\label{diag:phi}
     \vcenter{
       \xymatrix@C+1pc{
         TVX \ar@{-->}[r]^-{\varphi_X} \ar[d]_{TVa} & V\hatt X
         \ar[d]^{V\alpha_a^+} \\
         TA \ar[r]_-{\alpha} & A
       }
     }
   \end{equation}
 \end{notation}

\begin{expl}
  For the monoid monad $TX = X^*$ on $\Set$, the map
  \[
    \varphi_X\colon (VX)^* \to V\hatt X
  \]
  is the embedding of finite words into profinite words. More
  precisely, by representing elements of $\hatt X$ as compatible
  choices of congruences classes (see \Cref{ex:profinitewords}),
  $\phi_X$ maps $w\in X^*$ to the compatible family of all congruence
  classes $[w]_\sim$ of $w$, where $\sim$ ranges over all congruences
  on $X^*$ of finite rank.
\end{expl}

We now prove that the morphisms $\varphi_X$ are the components of a
monad morphism from $\MT$ to $\hatT$ in the sense of
Street~\cite{Street1972}.
\begin{lem}\label{lem:profinite-monad-morphism}
  The morphisms $\varphi_X$ form a natural transformation 
  \[
    \varphi \colon TV \to V\hatt 
  \]
  such that the following diagrams commute:
  \[
      \xymatrix@C-1.5em{
        & V \ar[ld]_{\eta V} \ar[rd]^{V\hateta} \\
        TV \ar[rr]_{\varphi} & & V\hatt 
      }
      \qquad\qquad
      \xymatrix{
        TTV \ar[r]^{T\varphi} \ar[d]_{\mu V} & TV\hatt \ar[r]^{\varphi\hatt} &
        V\hatt \hatt \ar[d]^{V\hatmu} \\
        TV\ar[rr]_{\varphi} && V\hatt 
      }
  \]
\end{lem}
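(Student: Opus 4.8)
The plan is to reduce every one of the three assertions to a diagram chase into the cofiltered limit $\hatt X = \lim Q_X$. Since $V$ is cofinitary (\Cref{not:v}), the cone $\bigl(V\alpha_a^+\colon V\hatt X\to A\bigr)$, indexed by the objects $a\colon X\to K(A,\alpha)$ of $X/K$, is a limit cone in $\D$; in particular it is jointly monic. Hence two morphisms with codomain $V\hatt X$ coincide as soon as they agree after postcomposition with every $V\alpha_a^+$. Each verification below follows this pattern, using only the defining squares \eqref{diag:phi}, the formulas for $\hatt$ on morphisms, for $\hateta$ and for $\hatmu$ from \Cref{cons:profinite-monad}, the naturality of $\eta$ and $\mu$, and the unit and associativity laws of the finite $\MT$-algebra $(A,\alpha)$ together with the convention $\alpha_{\id_A}^+=\alpha^+$.

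\emph{Naturality.} Given $f\colon X\to Y$ in $\Pro\Df$, I postcompose $V\hatt f\circ\varphi_X$ and $\varphi_Y\circ TVf$ with $V\alpha_a^+$ for an arbitrary $a\colon Y\to K(A,\alpha)$. On the left, $\alpha_a^+\circ\hatt f=\alpha_{a\circ f}^+$ by \Cref{cons:profinite-monad}\ref{itm:cons:profinite-monad-b}, so \eqref{diag:phi} applied to the object $a\circ f$ of $X/K$ gives $\alpha\circ TV(a\circ f)=\alpha\circ TVa\circ TVf$; on the right, \eqref{diag:phi} applied to $a$ gives the same expression. Joint monicity yields the naturality square.

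\emph{Unit triangle.} I postcompose $\varphi_X\circ\eta_{VX}$ and $V\hateta_X$ with $V\alpha_a^+$. The first becomes $\alpha\circ TVa\circ\eta_{VX}$ by \eqref{diag:phi}, which by naturality of $\eta$ equals $\alpha\circ\eta_A\circ Va=Va$, using the $\MT$-algebra unit law $\alpha\circ\eta_A=\id_A$. The second becomes $V(\alpha_a^+\circ\hateta_X)=Va$ by \Cref{cons:profinite-monad}\ref{itm:cons:profinite-monad-c}. Hence $\varphi\circ\eta V=V\hateta$.

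\emph{Multiplication square.} Here I first use the naturality of $\varphi$ just established. Postcompose $\varphi_X\circ\mu_{VX}$ and $V\hatmu_X\circ\varphi_{\hatt X}\circ T\varphi_X$ with $V\alpha_a^+$. The former gives $\alpha\circ TVa\circ\mu_{VX}=\alpha\circ\mu_A\circ TTVa$ by \eqref{diag:phi} and naturality of $\mu$. For the latter, the square defining $\hatmu$ in \Cref{cons:profinite-monad}\ref{itm:cons:profinite-monad-c} gives $\alpha_a^+\circ\hatmu_X=\alpha^+\circ\hatt\alpha_a^+$, so the composite equals $V\alpha^+\circ V\hatt\alpha_a^+\circ\varphi_{\hatt X}\circ T\varphi_X$; naturality of $\varphi$ at the morphism $\alpha_a^+\colon\hatt X\to A$ rewrites $V\hatt\alpha_a^+\circ\varphi_{\hatt X}$ as $\varphi_A\circ TV\alpha_a^+$, and then \eqref{diag:phi} applied to $a$ (inside the functor $T$) turns $TV\alpha_a^+\circ T\varphi_X$ into $T(\alpha\circ TVa)$. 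Finally \eqref{diag:phi} applied to $\id_A$ gives $V\alpha^+\circ\varphi_A=\alpha$, so the composite becomes $\alpha\circ T\alpha\circ TTVa$, which by $\MT$-algebra associativity equals $\alpha\circ\mu_A\circ TTVa$. The two sides agree, and joint monicity of the cone $(V\alpha_a^+)$ finishes the proof.

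The only point of care is the order of the three items: naturality of $\varphi$ must be proved first, since the multiplication identity invokes it at the structure morphisms $\alpha_a^+$. Beyond this bookkeeping — and keeping straight which of the limit cones ($Q_X$, $Q_{\hatt X}$, $Q_A$) is being tested against — there is no genuine obstacle.
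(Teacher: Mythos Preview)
Your proof is correct and follows essentially the same approach as the paper's own proof: both exploit that the cone $(V\alpha_a^+)$ is jointly monic (since $V$ is cofinitary) and verify each identity by postcomposing with $V\alpha_a^+$ and unwinding the definitions from \Cref{cons:profinite-monad} and~\eqref{diag:phi}. The paper presents the three verifications via commutative diagrams whereas you write them out as equation chains, but the logical content and the order of steps (in particular proving naturality first so it can be used in the multiplication square) are the same.
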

\begin{proof}
  \begin{enumerate}
  \item We first prove that $\varphi$ is natural. Given a morphism
    $f\colon X \to Y$ in $\Pro\Df$, consider an arbitrary object
    $a\colon Y \to K(A, \alpha)$ of $Q_Y$ (see
    \Cref{cons:profinite-monad}\ref{itm:cons:profinite-monad-a})
    and recall that by the definition of $\hatt$ on the morphism $f$
    we have
    \[
      \alpha_a^+\o \hatt f = \alpha_{a\o f}^+.
    \]
    Consider the following diagram:
    \[
      \xymatrix{
        TVX \ar[rrr]^-{\phi_X} \ar[dd]_{TVf} \ar[dr]^(.6){TV(a\o f)}
        & & &
        V\hatt X \ar[dl]_(.6){V\alpha_{a\o f}^+} \ar[dd]^{V\hatt f}
        \\
        &
        TA \ar[r]^\alpha
        &
        A
        \\
        TVY
        \ar[ur]_(.6){TVa}
        \ar[rrr]_{\phi_Y}
        & & &
        V\hatt Y \ar[ul]^(.6){V\alpha_a^+}
      }
    \] 
    Since all inner parts commute by definition, and the morphisms
    $V\alpha_a^+$ form a collectively monic cone using that $V$ is
    cofinitary, we see that the outside commutes,
    i.e.~$\phi$ is natural.
    
  \item To prove $V\hateta_X = \varphi_X \o \eta_{VX}$, use the
    collectively monic cone $V\alpha_a^+\colon V\hatt X \to VA$, where
    $a\colon X \to K(A, \alpha)$ ranges over $Q_X$. Using the triangle
    in \Cref{cons:profinite-monad}\ref{itm:cons:profinite-monad-c},
    we see that the following diagram
    \[
      \xymatrix{        
        VX
        \ar[r]^-{\eta_{VX}}
        \ar[d]_{Va}
        &
        TVX
        \ar[d]^{TVa}
        \ar[r]^-{\varphi_X}
        &
        V\hatt X \ar[d]^{V\alpha_a^+}
        \ar@{<-} `u[l] `[ll]_-{V\hateta_X} [ll]
        \\
        A \ar[r]_-{\eta_A}
        &
        TA \ar[r]_-{\alpha}
        &
        A
        \ar@{<-} `d[l] `[ll]^-{\id_A} [ll]
      }
    \]
    has the desired upper part commutative, since it commutes when
    post-composed by every $V\alpha_a^+$, which follows from the fact
    that the two lower squares and the outside clearly commute. 

  \item To prove
    $V\hatmu_X \o \varphi_{\hatt X} \o T{\varphi_X} = \varphi_X \o
    \mu_{VX}$, we again use the collectively monic cone
    $V\alpha_a^+$. The square in
    \Cref{cons:profinite-monad}\ref{itm:cons:profinite-monad-c}
    makes it clear that in the following diagram 
    \[
      \xymatrix@C+1pc{
        TTVX
        \ar[rd]^(.6){TTVa}
        \ar[dd]_{T\varphi_X} \ar[rrr]^-{\mu_{VX}}
        & & &
        TVX
        \ar[ld]_(.6){TVa}
        \ar[dddd]^{\varphi_X}
        \\
        &
        TTA
        \ar[r]^{\mu_A}
        \ar[d]_{T\alpha}
        &
        TA \ar[d]^{\alpha}
        \\
        TV\hatt X
        \ar[r]^-{TV\alpha_a^+}
        \ar[dd]_{\varphi_{\hatt X}}
        &
        TA\ar[r]_{\alpha}
        \ar[d]_{\varphi_A}
        &
        A
        \\
        &
        V\hatt A \ar[r]_-{V\alpha^+}
        &
        A
        \ar@{=}[u]
        \\
        V\hatt\hatt X
        \ar[ru]_(.6){V\hatt \alpha_a^+}
        \ar[rrr]_{V\hatmu_X}
        & & &
        V\hatt X
        \ar[lu]^(.6){V\alpha_a^+} 
      }
    \]
    the outside commutes, since it does when post-composed by all
    $V\alpha_a^+$.\qedhere
  \end{enumerate}
\end{proof}
\begin{prop}\label{prop:finite-T-algbras-are-hatT-algebras}
  The categories of finite $\MT$-algebras and finite $\hatT$-algebras are
  isomorphic: the functor taking  $(A, \alpha)$ to $(A, \alpha^+)$ and
  being the identity map on morphisms is an isomorphism. 
\end{prop}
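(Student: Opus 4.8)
The plan is to exhibit the functor $F\colon \Df^\MT \to (\Pro{\Df})^{\hatT}_\f$, $(A,\alpha)\mapsto(A,\alpha^+)$, acting as the identity on underlying morphisms, as a well-defined isomorphism of categories. Well-definedness on objects is exactly \Cref{rem:hattprops}\ref{itm:cons:profinite-monad-d}. For morphisms, given an $\MT$-homomorphism $f\colon (A,\alpha)\to(B,\beta)$, I would note that $f$ is itself a morphism from the object $\id_A\colon A\to K(A,\alpha)$ to the object $f\colon A\to K(B,\beta)$ in the coslice category $A/K$ (its underlying map is $f$, and $f$ is an $\MT$-homomorphism by hypothesis); the limit cone property of $Q_A$ then yields $f\o\alpha^+ = \beta^+_f$, while \Cref{cons:profinite-monad}\ref{itm:cons:profinite-monad-b} applied with $a=\id_B$ gives $\beta^+\o\hatt f = \beta^+_f$. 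Hence $f\o\alpha^+ = \beta^+\o\hatt f$, i.e.\ $f$ is a $\hatT$-homomorphism $(A,\alpha^+)\to(B,\beta^+)$. Since $F$ is the identity on morphisms (and composition in both categories is inherited from $\D$ and $\Pro{\Df}$, which agree on $\Df$), it is a faithful functor; it remains to prove that $F$ is full and bijective on objects.

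The key auxiliary identity I would use repeatedly is $V\alpha^+\o\varphi_A = \alpha$, which is precisely diagram~\eqref{diag:phi} for the algebra $(A,\alpha)$ with $a=\id_A$. For \emph{fullness}, take a $\hatT$-homomorphism $f\colon(A,\alpha^+)\to(B,\beta^+)$; applying $V$ to $f\o\alpha^+=\beta^+\o\hatt f$ and precomposing with $\varphi_A$, then using $V\alpha^+\o\varphi_A=\alpha$, $V\beta^+\o\varphi_B=\beta$, and the naturality of $\varphi$ (\Cref{lem:profinite-monad-morphism}, giving $V\hatt f\o\varphi_A=\varphi_B\o Tf$), one gets $f\o\alpha=\beta\o Tf$, so $f$ is an $\MT$-homomorphism. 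For \emph{injectivity on objects}, if $\alpha^+=\alpha'^+$ on a common underlying object $A$, then $\alpha=V\alpha^+\o\varphi_A=V\alpha'^+\o\varphi_A=\alpha'$.

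\emph{Surjectivity on objects} is the heart of the proof. Given a finite $\hatT$-algebra $(A,\gamma)$, I would set $\alpha\defeq V\gamma\o\varphi_A\colon TA\to A$ and first verify that $(A,\alpha)$ is an $\MT$-algebra: the unit law follows from $\varphi_A\o\eta_{VA}=V\hateta_A$ and $\gamma\o\hateta_A=\id_A$, and the associative law from the monad-morphism square of \Cref{lem:profinite-monad-morphism}, the associativity $\gamma\o\hatmu_A=\gamma\o\hatt\gamma$ of $\gamma$, and the naturality of $\varphi$ at $\gamma\colon\hatt A\to A$. The delicate point is then $\gamma=\alpha^+$. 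Since $\widehat A=A$ for $A\in\Df$ (\Cref{constr:the-left-adjoint-to-V}\ref{C:laV:1}), \Cref{lem:hattX-is-limit-of-finite-quotients} presents $\hatt A$ as the cofiltered limit of the diagram $D_A$ of all finite quotient algebras $e\colon(TA,\mu_A)\epito(B,\beta)$, with legs $\alpha^+_e$ (\Cref{not:abuselimitprojections}); note $\alpha\colon(TA,\mu_A)\epito(A,\alpha)$ is one such quotient (it is split epic via $\eta_A$, hence in $\E$), and its leg is exactly $\alpha^+$. Because $A$ is finitely copresentable in $\Pro{\Df}$, the morphism $\gamma$ factors through some leg of this cofiltered limit, and passing to a common lower object $e'$ of $D_A$ we obtain $\gamma=u_1\o\alpha^+_{e'}$ and $\alpha^+=u_2\o\alpha^+_{e'}$ with $u_1,u_2\colon B'\to A$. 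A short computation using diagram~\eqref{diag:phi} for $(B',\beta')$ and $a=e'\o\eta_A$, the fact that $e'$ is an $\MT$-homomorphism, and the monad law $\mu_A\o T\eta_A=\id$ shows $V\alpha^+_{e'}\o\varphi_A=e'$; hence $u_1\o e' = V\gamma\o\varphi_A = \alpha = V\alpha^+\o\varphi_A = u_2\o e'$, and since $e'$ is epic, $u_1=u_2$, so $\gamma=\alpha^+$. Thus $F$ is bijective on objects, full, and faithful, hence an isomorphism, with inverse $(A,\gamma)\mapsto(A,V\gamma\o\varphi_A)$. I expect this last density-type step --- recovering $\gamma$ from $\alpha^+$ --- to be the main obstacle; it is resolved by the observation that a morphism out of $\hatt A$ into a finite (hence finitely copresentable) object is pinned down by its restriction along $\varphi_A$, via the cofiltered-limit presentation of $\hatt A$ by finite quotients of $(TA,\mu_A)$.
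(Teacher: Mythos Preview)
Your proof is correct and follows essentially the same approach as the paper's: the well-definedness and fullness arguments are identical, and for surjectivity on objects both you and the paper define $\alpha = V\gamma\o\varphi_A$, verify the $\MT$-algebra laws via \Cref{lem:profinite-monad-morphism}, and then show $\gamma=\alpha^+$ by factoring both through a common leg $\beta^+_{e'}$ of the cofiltered limit $\hatt A=\lim D_A$ and cancelling the epimorphism $e'$ after reducing to the identity $V\beta^+_{e'}\o\varphi_A = e'$. The only cosmetic difference is that the paper carries out this last reduction in several displayed lines (its equations~\eqref{diag:beta+}--\eqref{eq:aldel} and the chain ending in $\delta_0\o b=\alpha_0\o b$), whereas you compress it into the single observation $u_i\o e' = V(u_i\o\alpha^+_{e'})\o\varphi_A = \alpha$; the content is the same.
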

\begin{proof}
  \begin{enumerate}
  \item We first prove that, given finite $\MT$-algebras $(A,
      \alpha)$ and $(B, \beta)$, a morphism $h\colon A \to B$ is a
      homomorphism for $\MT$ iff $h\colon (A, \alpha^+) \to (B, \beta^+)$ is a
      homomorphism for $\hatT$. If the latter holds, then the naturality of
      $\varphi$ yields a commutative diagram as follows
      \[
        \xymatrix{
          TA \ar[d]_{Th} \ar[r]^-{\varphi_A} & V\hatt A \ar[d]_{V\hatt h} \ar[r]^-{V\alpha^+} &
          VA\ar[d]^{Vh} \ar@{=}[r] & A \ar[d]^{h} \\
          TB \ar[r]_-{\varphi_B} & V\hatt B \ar[r]_-{V\beta^+} & VB \ar@{=}[r] & B
        }
      \]
      Thus $h$ is a homomorphism for $\MT$, since the horizontal morphisms are
      $\alpha$ and $\beta$, respectively. 

      Conversely, if $h$ is a homomorphism for $\MT$, then the diagram
      $Q_A$ of
      \Cref{cons:profinite-monad}\ref{itm:cons:profinite-monad-a}
      has the following connecting morphism
      \[
        \xymatrix@C-1em{
          & A \ar[ld]_{\id_A} \ar[rd]^{h} \\
        K(A, \alpha) \ar[rr]_{Kh} & & K(B, \beta).
        }
      \]
      This implies $h \o \alpha^+ = \beta_h^+$. The definition of
      $\hatt h$ yields $\beta^+ \o \hatt h = \beta_h^+$ (see
      \Cref{cons:profinite-monad}\ref{itm:cons:profinite-monad-a}
      again). Thus, $h$ is a homomorphism for $\hatT$:
      \[
        \xymatrix{
          \hatt \ar[rd]|*+{\labelstyle \beta_h^+} 
            \ar[r]^{\alpha^+} \ar[d]_{\hatt h} A & A \ar[d]^{h} \\
          \hatt B \ar[r]_{\beta^+} & B
        }
      \]
      Note that the \emph{only if} part implies that the object
      assignment $(A,\alpha) \mapsto (A, \alpha^+)$ is indeed
      functorial.
      \takeout{
        To show that the functor above is well-defined, we verify for every
      $\MT$-homomorphism
      \[
        \xymatrix{
        TA \ar[d]_{Th} \ar[r]^{\alpha} & A \ar[d]^{h} \\
        TB \ar[r]_{\beta} & B
        }
      \]
      with $A, B$ finite that $h$ is also a $\hatT$-homomorphism from $(A, \alpha^+)$ to
      $(B, \beta^+)$. In the diagram $Q_A$ of \Cref{cons:profinite-monad}\ref{itm:cons:profinite-monad-a} we
      have the following connecting morphism
      \[
        \xymatrix@C-1em{
          & A \ar[ld]_{\id_A} \ar[rd]^{h} \\
        K(A, \alpha) \ar[rr]_{Kh} && K(B, \beta)
        }
      \]
      This implies
      \[
        h \o \alpha^+ = \beta_h^+.
      \]
      Recall also that the definition of $\hatt h$ yields $\beta^+ \o \hatt h =
      \beta^+_h.$ Thus, the following diagram
      \[
        \xymatrix{
          \hatt A \ar[r]^{\alpha^+} \ar[d]_{\hatt h} \ar[rd]^{\beta_h^+} & A
          \ar[d]^{h} \\
          \hatt B \ar[r]_{\beta^+} & B
        }
      \]
      commutes, as desired.}

    \item For every finite $\hatT$-algebra $(A, \delta)$ we prove that the composite
      \begin{equation}\label{eq:defalpha}
        \alpha = TA \xto{\varphi_A} V\hatt A \xto{V\delta} VA = A 
      \end{equation}
      defines a $\MT$-algebra with $\alpha^+ = \delta$. 

      The unit law follows from that of $\delta$, $\delta \o \hateta_A
      = \id$ and from $\varphi_A \o \eta_A = V\hateta_A$ (see
      \Cref{lem:profinite-monad-morphism}):
      \[
        \xymatrix{
          A \ar[d]_{\eta_A} \ar[rd]^(.6){V\hateta_A}
          \ar `r[rrd]^-{\id_A} [rrd]
          \\
          TA
          \ar[r]_-{\varphi_A}
          &
          V\hatt A \ar[r]_-{V\delta}
          &
          A
          \ar@{<-} `d[l] `[ll]^-{\alpha} [ll]
        }
      \]
      The associative law follows from that of $\delta$,
      $\delta \o \hatmu_A = \delta \o \hatt \delta$ and from
      $\varphi_A \o \mu_A = V \hatmu_A \o \varphi_{\hatt A} \o
      T\varphi_A$ (see \Cref{lem:profinite-monad-morphism}):
      \[
        \xymatrix{
          TTA
          \ar[d]_{T\varphi_A}
          \ar`l`[dd]_{T\alpha} [dd]
          \ar[rr]^{\mu_A}
          & &
          TA \ar[d]^{\varphi_A}
          \ar`r`[dd]^{\alpha}[dd]
          \\
          TV\hatt A \ar[d]_{TV\delta} \ar[r]^{\varphi_{\hatt A}}
          &
          V\hatt\hatt A
          \ar[r]^{V\hatmu_A} \ar[d]_{V\hatt\delta}
          &
          V\hatt A \ar[d]^{V\delta}
          \\
          TA \ar[r]_{\varphi_A} 
          &
          V\hatt A \ar[r]_{V\delta}
          &
          A
          \ar@{<-}`d[l] `[ll]^-{\alpha} [ll]
        }
      \]
      To prove that
      \[
        \alpha^+ = \delta,
      \]
      recall from \Cref{lem:hattX-is-limit-of-finite-quotients} and
      \Cref{not:abuselimitprojections} that $\hatt A$ is a
      cofiltered limit of all finite quotients
      $b\colon (TA, \mu_A) \epito (B, \beta)$ in $\D^\MT$ with the
      limit cone $\beta_b^+\colon \hatt A \epito B$.  Since $A$ is
      finite, both $\alpha^+$ and $\delta$ factorize through one of
      the limit projections $\beta_b^+$, i.e.~we have commutative
      triangles as follows:
      \begin{equation}\label{diag:bb+}
        \vcenter{
        \xymatrix@R+.5pc{
          A
          &
          \hatt A
          \ar[d]|*+{\labelstyle \beta_b^+}
          \ar[r]^{\delta} \ar[l]_{\alpha^+}
          &
          A
          \\
          &
          B
          \ar[ru]_{\delta_0}
          \ar[lu]^{\alpha_0} 
        }}
      \end{equation}
      Recall from \Cref{not:abuselimitprojections} that $\beta_b^+$
      denotes $\beta_{\widehat{b \o \eta_A}}^+$, and by
      \Cref{lem:the-left-adjoint-to-V} we have $\widehat{b \o
        \eta_A} = b \o \eta_A\colon A \to B$ since this morphism lies
      in $\Df$. Combining this with the definition~\eqref{diag:phi} of
      $\phi_A$ we have a commutative square
      \begin{equation}\label{diag:beta+}
        \vcenter{
          \xymatrix{
            TVA
            \ar[r]^-{\phi_A}
            \ar[d]_{TV(b \o \eta_A)}
            &
            V \hatt A
            \ar[d]^{\beta_b^+}
            \\
            TB \ar[r]_-\beta & B
          }
        }
      \end{equation}
      Now we compute
      \begin{align*}
          \delta_0 \o \beta \o T(b\o \eta_A)
          & = \delta_0 \o \beta \o TV(b \o \eta_A)
          &
          \text{since $b \o \eta_A$ lies in $\Df$}
          \\
          &=
          \delta_0 \o V\beta_b^+ \o \varphi_A
          &
          \text{by~\eqref{diag:beta+}}
          \\
          & =
          V\delta_0 \o V\beta_b^+ \o \varphi_A
          &
          \text{since $\delta_0$ lies in $\Df$}
          \\
          &
          =
          V\delta \o \varphi_A
          &
          \text{by~\eqref{diag:bb+}.}
      \end{align*}
      Analogously, we obtain
      \begin{equation}\label{eq:al+}
        V\alpha^+ \o \varphi_A = \alpha_0 \o \beta \o T(b\o \eta_A).
      \end{equation}
      From the definition~\eqref{diag:phi} of $\varphi_A$, we also get
      \begin{equation}\label{eq:aldel}
        V\alpha^+\o \phi_A
        =
        V\alpha_{id}^+ \o \phi_A
        =
        \alpha \o TV\id_A
        =
        \alpha
        =
        V\delta \o \varphi_A,
      \end{equation}
      where we use~\eqref{eq:defalpha} in the last step.
      Therefore, we can compute
      \begin{align*}
        \delta_0 \o b
        &
        = \delta_0 \o b \o \mu_A \o T\eta_A
        &
        \text{since $\mu_A \o T\eta_A = \id$}
        \\
        &
        = \delta_0 \o \beta\o Tb \o T\eta_A
        &
        \text{since $b$ is a $\MT$-homomorphism}
        \\
        &
        = V\delta\o \phi_A
        &
        \text{shown previously}
        \\
        &
        = V\alpha^+ \o \phi_A
        &
        \text{by~\eqref{eq:aldel}}
        \\
        &
        = \alpha_0 \o \beta \o Tb \o T\eta_A
        &
        \text{by~\eqref{eq:al+}}
        \\
        &
        = \alpha_0 \o b \o \mu_A \o T\eta_A
        &
        \text{since $b$ is a $\MT$-homomorphism}
        \\
        &
        = \alpha_0 \o b.
        &
        \text{since $\mu_A \o T\eta_A = \id$.}
      \end{align*}
      Since $b$ is epic, this implies
      $\alpha_0=\delta_0$, whence $\alpha^+=\delta$.
    \item Uniqueness of $\alpha$. Let $(A, \alpha)$ be a finite $\MT$-algebra with
      $\alpha^+ =\delta$. By the definition of $\varphi_A$ this implies 
      \[
        \alpha = V \alpha^+ \o \varphi_A = V\delta \o \varphi_A, 
      \]
      so $\alpha$ is unique.\qedhere 
  \end{enumerate}
\end{proof}
From now on, we identify finite algebras for $\MT$ and for $\hatT$. 

\begin{prop}\label{prop:pro-finite-Talgebras}
  The pro-completion of the category $\Df^\MT$ of finite
  $\MT$-algebras is the full subcategory of the category of
  $\hatT$-algebras given by all cofiltered limits of finite
  $\MT$-algebras.
\end{prop}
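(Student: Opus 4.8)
The plan is to realise the claimed subcategory as the essential image of the canonical comparison functor out of $\Pro(\Df^\MT)$. Since $\Pro{\Df}$ has cofiltered limits and the forgetful functor $U\colon(\Pro{\Df})^{\hatT}\to\Pro{\Df}$ creates them, the category $(\Pro{\Df})^{\hatT}$ has cofiltered limits. By \Cref{prop:finite-T-algbras-are-hatT-algebras} we may identify $\Df^\MT$ with the full subcategory of $(\Pro{\Df})^{\hatT}$ on the $\hatT$-algebras whose underlying object lies in $\Df$; composing this full embedding with the universal property of the pro-completion (\Cref{not:proc}) yields an essentially unique cofinitary functor
\[
  \overline{K}\colon \Pro(\Df^\MT)\longrightarrow(\Pro{\Df})^{\hatT}
\]
extending it. I would then prove that $\overline{K}$ is fully faithful and that its essential image is exactly the full subcategory $\mathscr L\subseteq(\Pro{\Df})^{\hatT}$ of all cofiltered limits of finite $\MT$-algebras; this makes $\overline{K}$ corestrict to the desired equivalence $\Pro(\Df^\MT)\simeq\mathscr L$.

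The crux, and the only step with real content, is the claim that \emph{every finite $\hatT$-algebra $(B,\beta)$ is finitely copresentable in $(\Pro{\Df})^{\hatT}$}. Here is the shape of the argument. Let $a_i\colon(A,\alpha)\to(A_i,\alpha_i)$ ($i\in I$) be a cofiltered limit cone in $(\Pro{\Df})^{\hatT}$; then $a_i\colon A\to A_i$ is a cofiltered limit cone in $\Pro{\Df}$, and since $\hatt$ is cofinitary (\Cref{rem:hattprops}\ref{itm:cons:profinite-monad-e}) so is $\hatt a_i\colon\hatt A\to\hatt A_i$. Given a homomorphism $f\colon(A,\alpha)\to(B,\beta)$, finite copresentability of $B\in\Df$ in $\Pro{\Df}$ yields an essentially unique factorisation $f=g\o a_i$ with $g\colon A_i\to B$ in $\Pro{\Df}$. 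Combining $f\o\alpha=\beta\o\hatt f$, the homomorphism law $a_i\o\alpha=\alpha_i\o\hatt a_i$, and $\hatt f=\hatt g\o\hatt a_i$ one gets $g\o\alpha_i\o\hatt a_i=\beta\o\hatt g\o\hatt a_i$; by essential uniqueness in $\Pro{\Df}$ there is a connecting morphism $a_{ji}\colon A_j\to A_i$ with $g\o\alpha_i\o\hatt a_{ji}=\beta\o\hatt g\o\hatt a_{ji}$, and since $a_{ji}$ is a homomorphism ($\alpha_i\o\hatt a_{ji}=a_{ji}\o\alpha_j$) this rewrites to $(g\o a_{ji})\o\alpha_j=\beta\o\hatt(g\o a_{ji})$. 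Hence $g\o a_{ji}\colon(A_j,\alpha_j)\to(B,\beta)$ is a $\hatT$-homomorphism through which $f=(g\o a_{ji})\o a_j$ factors, and essential uniqueness of this factorisation is inherited from the one in $\Pro{\Df}$. This is precisely where the cofinitariness of the profinite monad does the work, and I expect it to be the main obstacle to get cleanly right.

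Granting this, $\overline{K}$ is fully faithful by a routine computation: writing $L=\limit_i A_i$ and $L'=\limit_j A'_j$ as cofiltered limits of objects of $\Df^\MT$ in $\Pro(\Df^\MT)$, we have $\overline{K}L=\limit_i\overline{K}A_i$ and $\overline{K}L'=\limit_j\overline{K}A'_j$ because $\overline{K}$ is cofinitary and restricts to the full embedding $\Df^\MT\hookrightarrow(\Pro{\Df})^{\hatT}$; since the $A'_j$ are finitely copresentable in $\Pro(\Df^\MT)$ and the $\overline{K}A'_j$ are finitely copresentable in $(\Pro{\Df})^{\hatT}$ by the previous paragraph, both $\Pro(\Df^\MT)(L,L')$ and $(\Pro{\Df})^{\hatT}(\overline{K}L,\overline{K}L')$ are canonically isomorphic to $\limit_j\colim_i\Df^\MT(A_i,A'_j)$, compatibly with $\overline{K}$. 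Finally, the essential image of $\overline{K}$ is $\mathscr L$: every object of $\Pro(\Df^\MT)$ is a cofiltered limit of objects of $\Df^\MT$, so its image under the cofinitary functor $\overline{K}$ is a cofiltered limit of finite $\hatT$-algebras and lies in $\mathscr L$; conversely any $X\in\mathscr L$ is by definition $\limit_J G_j$ for some cofiltered diagram $G$ of finite $\hatT$-algebras, equivalently of objects of $\Df^\MT$, and $X\cong\overline{K}(\limit_J G_j)$ for the limit formed in $\Pro(\Df^\MT)$, which $\overline{K}$ preserves. Thus $\overline{K}$ is an equivalence onto $\mathscr L$, which is the assertion of the proposition.
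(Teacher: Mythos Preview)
Your proposal is correct, and the core technical step---showing that a finite $\hatT$-algebra $(B,\beta)$ is finitely copresentable, by first factorising in $\Pro{\Df}$ using that $B\in\Df$ is finitely copresentable there, and then passing to a further connecting morphism via cofinitariness of $\hatt$ (\Cref{rem:hattprops}\ref{itm:cons:profinite-monad-e}) to make the factorising map a $\hatT$-homomorphism---is exactly what the paper does. The only difference is packaging: the paper applies the abstract characterisation of pro-completions (\Cref{lem:char-of-procompletion}) and verifies its three conditions for $\mathscr L$, whereas you construct the comparison functor $\overline{K}$ by hand and verify it is an equivalence onto $\mathscr L$ via the hom-set formula $\limit_j\colim_i\Df^\MT(A_i,A'_j)$. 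Since the proof of \Cref{lem:char-of-procompletion} is precisely this construction, you are effectively inlining that lemma; the paper's route is slightly shorter because it reuses the appendix result, but your version is self-contained and even yields the marginally stronger statement that finite $\hatT$-algebras are finitely copresentable in all of $(\Pro{\Df})^{\hatT}$, not just in $\mathscr L$.
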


\begin{proof}
  Let $\L$ denote the full subcategory of $(\Pro{\Df})^\hatT$ given by
  all cofiltered limits of finite $\MT$-algebras. To show that $\L$
  forms the pro-completion of $\Df^\MT$, we verify the three
  conditions of \Cref{lem:char-of-procompletion}. By definition
  $\L$ satisfies condition~\ref{itm:char-of-procompletion-3}, and
  condition~\ref{itm:char-of-procompletion-1} follows from the fact
  that since $\Pro\Df$ has cofiltered limits, so does
  $(\Pro\Df)^{\hatT}$. Thus, it only remains to prove
  condition~\ref{itm:char-of-procompletion-2}: every algebra $(A, \alpha^+)$
  with $(A, \alpha) \in \Df^\MT$ is finitely copresentable in
  $\L$. Let $b_i \colon (B, \beta) \to (B_i, \beta_i)$, $(i \in I)$,
  be a limit cone of a cofiltered diagram $D$ in $\L$. Our task is to
  prove for every morphism $f\colon (B, \beta) \to (A, \alpha^+)$ that
  \begin{enumerate}[label=(\alph*)]
    \item\label{itm:copresentable-1} a factorization through a limit projection
      exists, i.e.~$f = f' \o b_i$ for some $i \in I$ and $f'\colon (B_i, \beta_i)
      \to (A, \alpha^+)$, and

    \item\label{itm:copresentable-2} given another factorization $f = f'' \o
      b_i$ in $\L$, then $f'$ and $f''$ are merged by a connecting morphism
      $b_{ji} \colon (B_j, \beta_j) \to (B_i, \beta_i)$ of $D$ (for some
      $j  \in I$). 
  \end{enumerate}
  
  Ad~\ref{itm:copresentable-1}, since $b_i \colon B \to B_i$ is a
  limit of a cofiltered diagram in $\Pro\Df$ and $A$ is as an object
  of $\Df$ finitely copresentable in $\Pro\Df$, we have $i \in I$ and
  a factorization $f=f'\o b_i$, for some $f'\colon B_i \to A$ in
  $\Pro\Df$.  If $f'$ is a $\MT$-homomorphism, i.e.~if the following
  diagram
  \begin{equation}\label{diag:above}
    \vcenter{
      \xymatrix{
        \hatt B \ar[r]^{\beta}\ar[d]_{\hatt b_i}
        \ar `l[d] `[dd]_{\hatt f} [dd]
        &
        B \ar[d]^{b_i}
        \ar `r[d] `[dd]^f [dd]
        \\
        \hatt B_i\ar[r]^{\beta_i}\ar[d]_{\hatt f'}  & B_i \ar[d]^{f'}\\
        \hatt A \ar[r]_{\alpha^+} & A
      }}
  \end{equation}
  commutes, we are done. In general, we have to change the choice of $i$: from
  \Cref{cons:profinite-monad}\ref{itm:cons:profinite-monad-e} recall that
  $\hatt$ is cofinitary, thus $(\hatt b_i)_{i \in I}$ is a limit
  cone. The parallel pair
  \[
    f' \o \beta_i, \alpha^+ \o \hatt f' \colon \hatt B_i \to A
  \]
  has a finitely copresentable codomain (in $\Pro{\Df}$) and is merged
  by $\hatt b_i$. Indeed, the outside of the above
  diagram~\eqref{diag:above} commutes since $f = f' \o b_i$ is a
  homomorphism. Consequently, that parallel pair is also merged by
  $\hatt b_{ji}$ for some connecting morphism
  $b_{ji} \colon (B_j, \beta_j) \to (B_i, \beta_i)$ of the diagram
  $D$:
  \[
    (\alpha^+ \o \hatt f') \o \hatt b_{ji} = (f' \o \beta_i)\o \hatt b_{ji}.
  \]
  From $b_i = b_{ji} \o b_j$ we get another factorization of $f$:
  \[
    f = (f' \o b_{ji}) \o b_j
  \]
  and this tells us that the factorization morphism
  $\overline{f} = f' \o b_{ji}$ is a homomorphism as desired:
  \[
    \xymatrix{
      \hatt B \ar[r]^{\beta} \ar[d]_{\hatt b_j} & B \ar[d]^{b_j}\\
      \hatt B_j \ar[r]^{\beta_j} \ar `l[d] `[dd]_{\hatt \overline f}[dd]
      \ar[d]_{\hatt b_{ji}} & B_j \ar[d]^{b_{ji}}
      \ar`r[d] `[dd]^{\overline f} [dd]\\
      \hatt B_i \ar[r]^{\beta_i} \ar[d]_{\hatt f'} & B_i \ar[d]^{f'} \\
      \hatt A \ar[r]_{\alpha^+} & A 
    }
  \]
  
  Ad~\ref{itm:copresentable-2}, suppose that $f', f''\colon (B_i, \beta_i) \to
  (A, \alpha)$ are homomorphisms satisfying $f = f' \o b_i = f'' \o b_i$. Since
  $B = \lim B_i$ is a cofiltered limit in $\Pro\Df$ and the limit projection
  $b_i$ merges $f', f''\colon B_i \to A$, it follows that some connecting
  morphism $b_{ji} \colon (B_j, \beta_j) \to (B_i, \beta_i)$ also merges $f',
  f''$, as desired.
\end{proof}

\begin{remark}
  If $(\E, \M)$ is a profinite factorization system on $\D$, then
  $(\E^\MT, \M^\MT)$ is a profinite factorization system on
  $\D^\MT$. Indeed, since $\E$ is closed in $\D^\to$ under cofiltered
  limits of finite quotients, and since the forgetful functor from
  $(\D^\MT)^\to$ to $\D^\to$ creates limits, it follows that $\E^\MT$
  is also closed under cofiltered limits of finite quotients.
\end{remark}

%
%
%

\begin{defn}\label{def:hattsatisfaction}
  A \emph{$\hatT$-equation}
  is an equation in the category of $\hatT$-algebras, i.e.~a
  $\hatT$-homomorphism $e$ in $\hatE^\hatT$ with
  $\widehat{\E}^{\hatT}$-projective domain. A finite $\MT$-algebra
  \emph{satisfies} $e$ if it is injective with respect to $e$ in $(\Pro\Df)^{\hatT}$. 
\end{defn}

\begin{thm}[Generalized Reiterman Theorem for
  Monads]\label{thm:reiterman-for-monads}
  Let $\D$ be a category with a profinite factorization system $(\E,\M)$, and suppose that $\MT$ is
  a monad preserving quotients. Then a class of finite $\MT$-algebras is a
  pseudovariety in $\Df^\MT$ iff it can be presented by $\hatT$-equations. 
\end{thm}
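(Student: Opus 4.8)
The plan is to reduce the Generalized Reiterman Theorem for Monads to the already-established Generalized Reiterman Theorem (\Cref{thm:reiterman}), applied not to $\D$ but to the category $\Pro\Df$ of profinite $\Df$-objects, equipped with the factorization system $(\widehat\E,\widehat\M)$ and with its distinguished small full subcategory $\Df$ of ``finite'' objects. For this we must run the whole pro-completion/pseudovariety machinery of \Cref{sec:pseudovar} inside $(\Pro\Df)^{\hatT}$ rather than inside $\D^\MT$; the key observation that makes this legitimate is \Cref{prop:finite-T-algbras-are-hatT-algebras}, which tells us that the finite $\MT$-algebras and the finite $\hatT$-algebras are \emph{the same thing}, so a pseudovariety of finite $\MT$-algebras is literally a pseudovariety in the sense of \Cref{sec:pseudovar} living in the category $\mathscr{B}\defeq \Pro\Df$ under the monad $\hatT$.

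First I would verify that the data $(\Pro\Df,(\widehat\E,\widehat\M),\Df)$ together with the monad $\hatT$ satisfies all of \Cref{assum:factorization-system} and the standing assumptions of \Cref{sec:profinite-monad}. The factorization system $(\widehat\E,\widehat\M)$ is proper by \Cref{prop:factorization-of-proD}; $\Pro\Df$ is complete since it has all cofiltered limits and $\Df$ has finite limits; $\Df$ is closed under finite limits and subobjects by hypothesis on $\D$; every object of $\Df$ is an $\widehat\E$-quotient of an $\widehat\E$-projective object of $\Pro\Df$ by \Cref{prop:prop-of-profinite-fact-system}(3); and $\hatT$ preserves $\widehat\E$-quotients by \Cref{coro:hatT-preserves-quotients}. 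Moreover, by \Cref{rem:dtsatassumptions} (applied with $\Pro\Df$ in place of $\D$ and $\hatT$ in place of $\MT$, exactly as remarked right after \Cref{coro:hatT-preserves-quotients}), the category $(\Pro\Df)^{\hatT}$ with $(\Pro\Df)^{\hatT}_\f$ the $\hatT$-algebras carried by objects of $\Df$ again satisfies \Cref{assum:factorization-system}, and every finite $\hatT$-algebra is an $\widehat\E^{\hatT}$-quotient of an $\widehat\E^{\hatT}$-projective $\hatT$-algebra. Thus $(\Pro\Df)^{\hatT}$ is a category to which \Cref{thm:reiterman} applies directly — but with one caveat discussed below.

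The substantive point is that \Cref{thm:reiterman} requires a \emph{profinite} factorization system on the base, here on $\Pro\Df$; that is, $\widehat\E$ must be closed in $(\Pro\Df)^\to$ under cofiltered limits of morphisms in $\E_\f$. But this is essentially the definition of $\widehat\E$: a cofiltered limit of a diagram in $(\Pro\Df)^\to$ whose objects lie in $\E_\f$ is, after passing to a cofinal subdiagram if necessary, again of the form witnessing membership in $\widehat\E$ (this is the content of \Cref{ex:profinite}, in particular items~\ref{itm:ex:profinite-2} and~\ref{itm:ex:profinite-3}, together with the fact that $\hatD^\to$ is the pro-completion of $\Df^\to$, used already in \Cref{prop:factorization-of-proD}). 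So $(\widehat\E,\widehat\M)$ is a profinite factorization system on $\Pro\Df$. Applying \Cref{thm:reiterman} to this base with distinguished subcategory $\Df$, we obtain: a class of finite objects of $\Pro\Df$ (equivalently, of $\D$, by \Cref{coro:pro-Df}-style identification — but really just: objects of $\Df$) is a pseudovariety iff presentable by profinite equations for $\Pro\Df$, i.e.\ by equations $e\colon X\epito E$ in $\widehat\E$ with $\widehat\E$-projective $X$.

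Finally I would apply the same reasoning one level up, to the monad. The Generalized Reiterman Theorem proved for a plain base category (\Cref{thm:reiterman}) must first be upgraded to algebras for a monad; but the excerpt has in fact arranged everything so that \emph{this upgrade is itself an instance of \Cref{thm:reiterman}}: given that $(\Pro\Df)^{\hatT}$ with its lifted factorization system and $(\Pro\Df)^{\hatT}_\f$ satisfies \Cref{assum:factorization-system}, and that the lifted system is profinite (by the \Cref{remark} immediately preceding \Cref{def:hattsatisfaction}, applied with $\Pro\Df,\hatT$ in place of $\D,\MT$, using that the forgetful functor $(\Pro\Df)^{\hatT}\to\Pro\Df$ creates limits and that $(\widehat\E,\widehat\M)$ is profinite on $\Pro\Df$ as just shown), \Cref{thm:reiterman} applied to the category $(\Pro\Df)^{\hatT}$ yields: a class of finite $\hatT$-algebras is a pseudovariety in $(\Pro\Df)^{\hatT}_\f$ iff it can be presented by equations in $(\Pro\Df)^{\hatT}$ with $\widehat\E^{\hatT}$-projective domain — which is exactly the definition of being presentable by $\hatT$-equations (\Cref{def:hattsatisfaction}). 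Since finite $\MT$-algebras and finite $\hatT$-algebras coincide as categories (\Cref{prop:finite-T-algbras-are-hatT-algebras}), and this identification respects the $(\E^\MT,\M^\MT)$- and $(\widehat\E^{\hatT},\widehat\M^{\hatT})$-factorization systems hence the notions of pseudovariety, finite products, subalgebras and quotients on both sides, the claim follows.

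\medskip

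\emph{Expected main obstacle.} The one place needing real care is the compatibility of the two factorization systems under the identification of \Cref{prop:finite-T-algbras-are-hatT-algebras}: one must check that a morphism $h$ of finite algebras lies in $\E^\MT$ (resp.\ $\M^\MT$) iff it lies in $\widehat\E^{\hatT}$ (resp.\ $\widehat\M^{\hatT}$), so that ``pseudovariety of finite $\MT$-algebras'' and ``pseudovariety of finite $\hatT$-algebras'' are literally the same notion. Since on finite algebras both factorization systems are created from the underlying factorization system on $\Df$ (which is shared, being just the restriction of $(\E,\M)$, equivalently of $(\widehat\E,\widehat\M)$, to $\Df$), and \Cref{prop:finite-T-algbras-are-hatT-algebras} is the identity on underlying objects and morphisms, this is immediate — but it is the hinge on which the whole reduction turns and should be stated explicitly.
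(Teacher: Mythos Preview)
Your strategy is the paper's strategy: reduce \Cref{thm:reiterman-for-monads} to \Cref{thm:reiterman} by moving to the category $(\Pro\Df)^{\hatT}$, using \Cref{prop:finite-T-algbras-are-hatT-algebras} to identify the finite objects, and exploiting that $(\hatt\widehat X,\hatmu_{\widehat X})$ with $X$ $\E$-projective supply enough projectives. The paper carries this out slightly more concretely by fixing that specific class $\Var$ and invoking \Cref{thm:reiterman} together with \Cref{re:projective-objects}, rather than abstractly re-verifying \Cref{assum:factorization-system} for $(\Pro\Df)^{\hatT}$.

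There is, however, one identification you pass over too quickly. When you write that ``\Cref{thm:reiterman} applied to the category $(\Pro\Df)^{\hatT}$ yields \ldots\ equations in $(\Pro\Df)^{\hatT}$ with $\widehat\E^{\hatT}$-projective domain'', that is not literally what \Cref{thm:reiterman} says. Applied to $\D'=(\Pro\Df)^{\hatT}$ with $\D'_\f=(\Pro\Df)^{\hatT}_\f\cong\Df^\MT$, it produces \emph{profinite equations for $\D'$}, i.e.\ morphisms in $\Pro(\D'_\f)=\Pro(\Df^\MT)$ lying in the factorization class $\widehat{\E'}$ of that pro-completion, with domain of the form $\widehat P$ for $P$ a projective in $\D'$. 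In general $\Pro(\Df^\MT)$ is only a full subcategory of $(\Pro\Df)^{\hatT}$ (this is \Cref{prop:pro-finite-Talgebras}), and neither the factorization class nor the projectivity notion there is \emph{a priori} the same as $\widehat\E^{\hatT}$ and $\widehat\E^{\hatT}$-projectivity in the ambient category. The paper's proof handles exactly this point: it shows that the members of $\Var$, the free algebras $(\hatt\widehat X,\hatmu_{\widehat X})$, already lie in $\Pro(\Df^\MT)$ (via \Cref{rem:hattprops}\ref{itm:cons:profinite-monad-f}), so that the hat construction is idempotent on them; and it checks directly that they are $\widehat\E^{\hatT}$-projective in $(\Pro\Df)^{\hatT}$ (free on a $\widehat\E$-projective object). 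Once these two facts are in hand, the profinite equations manufactured by \Cref{thm:reiterman} are visibly $\hatT$-equations, and since $\Pro(\Df^\MT)$ is full in $(\Pro\Df)^{\hatT}$, satisfaction agrees. Your ``expected main obstacle'' about matching the factorization systems on finite algebras is, as you say, immediate; the actual hinge is this $\Pro(\Df^\MT)$ versus $(\Pro\Df)^{\hatT}$ bookkeeping, and you should name it explicitly.
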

\begin{remark}\label{R:r-for-monads}
  We will see in the proof that the $\hatT$-equations presenting a
  given pseudovariety can be chosen to be of the form
  $e \colon (\hatt\widehat{X}, \hatmu_{\widehat{X}}) \epito (A,
  \alpha)$ where $e \in \widehat{\E}$, the object $X$ is
  $\E$-projective in $\D$, and $A$ is finite. Moreover, we can assume
  $X \in \Var$ for any class $\Var$ of objects as in
  \Cref{re:projective-objects}.
\end{remark}
\begin{proof}[Proof of \Cref{thm:reiterman-for-monads}]
  Every class of finite $\MT$-algebras presented by $\hatT$-equations
  is a pseudovariety -- this is analogous to
  \Cref{prop:pseudovariety=pseudoequations}.

  Conversely, let $\V$ be a pseudovariety in $\Df^\MT$. For every
  finite $\MT$-algebra $(A, \alpha)$ we have an $\E$-projective object
  $X$ in $\D$ and a quotient $e\colon X \epito A$ (see
  \Cref{assum:factorization-system}). Since
  $\widehat{e} \in \widehat{\E}$ by
  \Cref{ex:profinite}\ref{itm:ex:profinite-1}, we have
  $\hatt\widehat{e}\in\widehat{\E}$ by
  \Cref{coro:hatT-preserves-quotients}.  Therefore the homomorphism
  $\overline{e}\colon (\hatt \widehat{X}, \hatmu_X) \to (A, \alpha^+)$
  extending $\widehat{e}$ lies in $\widehat{\E}$: we have
  $\overline{e} = \alpha^+ \o \hatt\widehat{e}$, and $\alpha^+$ is a
  split epimorphism by the unit law $\alpha^+\o \hateta_A =
  \id_A$. Since $(\hatE, \hatM)$ is a proper factorization system and $\Pro{\Df}$ has finite coproducts, every split epimorphism lies in $\hatE$ \cite[Thm.~14.11]{AdamekEA09}, whence $\alpha^+\in \hatE$.
    Thus, we see that every finite $\MT$-algebra is a quotient, in the
  category of $\hatT$-algebras, of
  $(\hatt\widehat{X}, \hatmu_{\widehat{X}})$ for an $\E$-projective
  object $X$ of $\D$. Each such quotient lies in
  $\Pro\Df^\MT$. Indeed, the codomain, being a finite $\MT$-algebra,
  does. To see that the domain also does, combine
  \Cref{rem:hattprops}\ref{itm:cons:profinite-monad-f} and
  \Cref{prop:finite-T-algbras-are-hatT-algebras}.
  
  In \Cref{re:projective-objects} we can thus denote by $\Var$ the
  collection of all free  algebras $(\hatt\widehat{X}, \hatmu_{\widehat{X}})$
  where $X$ ranges over $\E$-projective objects of $\D$. Then
  \Cref{thm:reiterman} and \Cref{re:projective-objects}
  yield our claim that every pseudovariety in $\Df^\MT$ can be presented by
  $\hatT$-equations which are finite quotients of free algebras $(\hatt
  \widehat{X}, \hatmu_{\widehat{X}})$ where $X$ is $\E$-projective in $\D$. 
\end{proof}


\section{Profinite Terms and Implicit Operations}\label{sec:profinite-terms}

In our presentation so far, we have worked with an abstract
categorical notion of equations given by quotients of projective
objects.  In Reiterman's original paper~\cite{Reiterman1982} on
pseudovarieties of $\Sigma$-algebras, a different concept is used:
equations between \emph{implicit operations}, or equivalently, equations
between \emph{profinite terms}.  This raises a natural question: which
categories $\D$ allow the simplification of equations in the sense of
\Cref{def:hattsatisfaction} to equations between profinite terms?
It turns out to be sufficient that $\D$ is cocomplete and has a finite
dense set $\S$ of objects that are projective w.r.t.\ strong
epimorphisms. Recall that density of $\mathcal{S}$ means that every object $D$ of $\D$ is a canonical colimit of all morphisms from objects of $\mathcal{S}$ to $D$.
More precisely, if we view $\S$ as a full subcategory of $\D$, then $D$ is the colimit of the diagram
\[
  \mathcal{S}/D \to \D \quad\text{given by}\quad \left(s \xto{f} D\right) \mapsto s 
\]
with colimit cocone given by the morphisms $f$.
\begin{assumption}\label{asm:sec5}
  Throughout this section $\D$ is a cocomplete category with a finite dense set
  $\mathcal{S}$ of objects projective w.r.t.\ strong epimorphisms.
 It follows (see \Cref{prop:char-of-cocomplete-finite-dense} below) that $\D$ has $(\mathsf{StrongEpi}, \mathsf{Mono})$-factorizations, and we work with this
  factorization system. We denote by $\Df$ the collection of all objects $D$
  such that
  \begin{equation}
    \D(s, D) \text{ is finite for every object $s \in \mathcal{S}$.}
  \end{equation}
\end{assumption} 
We will show in \Cref{prop:char-of-cocomplete-finite-dense} below
that every category $\D$ satisfying the above assumptions can be
presented as a category of algebras over an $\S$-sorted signature.
Throughout this section, let $\Sigma$ be an $\mc{S}$-sorted
algebraic signature, i.e.~a signature without relation
symbols. We denote by
\[
  \SigmaAlg
\]
the category of $\Sigma$-algebras and homomorphisms.
\begin{expl}\label{ex:category-with-dense-projective}
  \begin{enumerate}
  \item\label{itm:ex:category-with-dense-projective-1} The category $\Set^{\mc{S}}$
    satisfies~\Cref{asm:sec5}. A finite dense set in $\Set^\S$ is
    given by the objects
      \[
        \mathbf{1}_s \;(s \in \mc{S})
      \]
      where $\mathbf{1}_s$ is the $\S$-sorted set that is empty in all
      sorts except $s$, and has a single element $\ast$ in sort
      $s$. Indeed, let $A$ and $B$ be $\mc{S}$-sorted sets and let a
      cocone of the canonical diagram for $A$ be given:
      \[
        \frac{\mathbf{1}_s \xto{f} A}{\mathbf{1}_s \xto{f^*} B}
      \]
      By this we mean that we have morphisms $f^*\colon \mathbf{1}_s
      \to B$ for every $f\colon \mathbf{1}_s \to A$ (and observe that
      the cocone condition is void in this case because there are no
      connecting morphisms $\mathbf{1} \to \mathbf{1}_t$ for $s \neq t$).
      Then we are to prove that there exists a unique $\mc{S}$-sorted function
      $h\colon A \to B$ with $f^*=h \o f$ for all $f$. Uniqueness is clear:
      given $x \in A$ of sort $s$, let $f_x \colon \mathbf{1}_s \to A$ be the
      map with $f_x(*) = x$. Then $h \o f_x = f_x^*$ implies 
      \[
        h(x) = f_x^*(*).
      \]
      Conversely, if $h$ is defined by the above equation, then for every $s \in
      \mc{S}$ and $f\colon \mathbf{1}_s \to A$ we have $f^* = h \o f$ because $f
      = f_x$ for $x = f(*)$. 

      More generally, every set of objects $\mathbf{K}_s$
      ($ s\in \mc{S}$), where $\mathbf{K}_s$ is nonempty in sort $s$
      and empty in all other sorts, is dense in $\Set^{\mc{S}}$.

    \item\label{itm:ex:category-with-dense-projective-2} The category
      $\SigmaAlg$ satisfies~\Cref{asm:sec5}.  Recall that strong
      epimorphisms are precisely the homomorphisms with surjective
      components, and monomorphisms are the homomorphisms with
      injective components.  It follows easily that for the
      free-algebra functor
      $F_\Sigma\colon \Set^{\mc{S}} \to \SigmaAlg$ all algebras
      $F_\Sigma X$ are projective w.r.t.\ strong epimorphisms. We
      present a finite dense set of free algebras.

      Assume first that $\Sigma$ is a unary signature, i.e.~all operation symbols in $\Sigma$ are of the form $\sigma\colon s\to t$.
      Then the free algebras
      \[
        F_\Sigma \mathbf{1}_s \; (s \in \mc{S})
      \]
      form a dense set in $\SigmaAlg$.  Indeed, let
      $U_\Sigma\colon \SigmaAlg \to \Set^{\mc{S}}$ denote the
      forgetful functor and $\eta\colon \Id \to U_\Sigma F_\Sigma$ the
      unit of the adjunction $F_\Sigma \dashv U_\Sigma$.  Given
      $\Sigma$-algebras $A$ and $B$ and a cocone of the canonical
      diagram as follows:
      \[
        \frac{F_\Sigma \mathbf{1}_s \xto{f} A}{F_\Sigma\mathbf{1}_s \xto{f^*} B}
      \]
      We are to prove that there exists a unique homomorphism
      $h\colon A \to B$ with $f^* = h \o f$ for every $f$. We obtain a
      corresponding cocone in $\Set^{\mc{S}}$ as follows:
      \[
        \frac{\mathbf{1}_s \xto{\eta} U_\Sigma F_\Sigma \mathbf{1}_s \xto{U_\Sigma f}
          U_\Sigma A}{\mathbf{1}_s \xto{\eta} U_\Sigma F_\Sigma \mathbf{1}_s
        \xto{U_\Sigma f^*} U_\Sigma B}
      \]
      Due to~\ref{itm:ex:category-with-dense-projective-1} there
      exists a unique function $k\colon U_\Sigma A \to U_\Sigma B$
      with
      \begin{equation}\label{eq:Ueta}
        U_\Sigma f^* \o \eta = (k \o U_\Sigma f) \o \eta \qquad 
        \text{for all $f$}.
      \end{equation}
Here and in the following we drop the
      subscripts indicating components of $\eta$.
      It remains to prove that $k$ is a homomorphism from $A$ to $B$;
      then the universal property of $\eta$ implies $f^* = k \o f$.
      Thus, given $\sigma\colon s \to t$ in $\Sigma$ and $a \in A_s$
      we need to prove $k (\sigma_A(a)) = \sigma_B(k(a))$.  Consider
      the unique homomorphisms
      \begin{eqnarray*}
        f\colon  F_\Sigma \mathbf{1}_t \to A,
        && f(\ast)=\sigma_A(a),\\
        g\colon F_\Sigma \mathbf{1}_s \to A,
        && g(\ast)=a,\\
        j\colon F_\Sigma \mathbf{1}_t \to F_\Sigma \mathbf{1}_s,
        && j(\ast)=\sigma(\ast).
      \end{eqnarray*}
      Then $f=g \o j$ and thus $f^*=g^*\o j$ because the morphisms
      $(\dash)^*$ form a cocone of the canonical diagram of $A$.  It
      follows that
      \[
        k(\sigma_A(a))
        =
        k(f(*))
        =
        f^*(\ast)
        =
        g^*(j(\ast))
        =
        g^*(\sigma(\ast))
        =
        \sigma_B(g^*(\ast))
        =
        \sigma_B(k(g(\ast)))
        =
        \sigma_B(k(a)),
      \]
      where the last but one equation holds by~\eqref{eq:Ueta}. Thus, $k$ is a
      homomorphism as desired.

      For a general signature $\Sigma$, let $k \in \mathbb{N} \cup \{\omega\}$
      be an upper bound of the arities of operation symbols in
      $\Sigma$ and let for every set $T \subseteq \mc{S}$ the
      following $\mc{S}$-sorted set $X_T$ be given: $X_T$ is empty for
      every sort outside of $T$, and for sorts $s\in T$ the elements
      are $(X_T)_s=\{\,i \mid i < k\,\}$.  Then the set
      \[
        F_\Sigma X_T \quad (T \subseteq \mc{S})
      \]
      is dense in $\SigmaAlg$.
      The proof is analogous to the unary case.


    \item\label{itm:ex:category-with-dense-projective-3} 
The category of graphs, i.e.~sets with a binary relation, and graph homomorphisms satisfies~\Cref{asm:sec5}.
Strong epimorphisms are precisely the surjective homomorphisms which are also surjective on all edges.
Thus the two graphs shown below are clearly projective w.r.t.\ strong epimorphisms.
Moreover, they form a dense set: every graph is a canonical colimit of all of its vertices and all of its edges.
\[
  \begin{tikzpicture}[scale=0.5]
    \draw[dotted] (-0.5,-0.5) rectangle (0.5,0.5);
    \draw (0,0) node [circle,fill,inner xsep=0.5mm, inner ysep=0cm]{};
  \end{tikzpicture} 
  \qquad\qquad
  \begin{tikzpicture}[scale=0.5]
    \draw[dotted] (-1.5,-0.5) rectangle (1.5,0.5);
    \path
    (-1, 0) node(x) [circle,fill,inner xsep=0.5mm, inner ysep=0cm] {}
    (1, 0) node(y)  [circle,fill,inner xsep=0.5mm, inner ysep=0cm] {};
    \draw[->] (x) -- (y);
  \end{tikzpicture}
\]

\item Every variety, and even every quasivariety of $\Sigma$-algebras
  (presented by implications) satisfies \Cref{asm:sec5}.  This will
  follow from \Cref{prop:char-of-cocomplete-finite-dense} below.
  \end{enumerate}
\end{expl}
\begin{defn}\label{def:closed_under_factorizions}
A full subcategory $\D$ of $\SigmaAlg$
is said to be \emph{closed under
  $(\mathsf{StrongEpi}, \mathsf{Mono})$-factorizations} if for
every morphism $f\colon A\to B$ of $\D$ with factorization
$f=\xymatrix@1{A \ar@{->>}[r]^e & C \ar@{ >->}[r]^m & B}$, the object $C$
lies in $\D$.
\end{defn}

\begin{prop}\label{prop:char-of-cocomplete-finite-dense} 
  For every category $\D$ the following two statements are equivalent:
  \begin{enumerate}
  \item\label{itm:prop:char-of-cocomplete-finite-dense-1} $\D$ is
    cocomplete and has a finite dense set of objects which are
    projective w.r.t. strong epimorphisms.
  \item\label{itm:prop:char-of-cocomplete-finite-dense-2} There exists
    a signature $\Sigma$ such that $\D$ is equivalent to a full
    reflective subcategory of $\SigmaAlg$ closed under
    $(\mathsf{StrongEpi}, \mathsf{Mono})$-factorizations.
  \end{enumerate}
  Moreover, $\Sigma$ can always be chosen to be a unary signature. 
\end{prop}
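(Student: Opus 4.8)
The plan is to prove the equivalence in two directions, with the forward direction being the substantive one.

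\smallskip

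\noindent\textbf{From \ref{itm:prop:char-of-cocomplete-finite-dense-2} to \ref{itm:prop:char-of-cocomplete-finite-dense-1}.} This is the easy direction. Assume $\D$ is (equivalent to) a full reflective subcategory of $\SigmaAlg$ closed under $(\mathsf{StrongEpi},\mathsf{Mono})$-factorizations. Being reflective in the cocomplete category $\SigmaAlg$, $\D$ is cocomplete (colimits are computed by reflecting the colimit in $\SigmaAlg$). For a finite dense set, take the free algebras $F_\Sigma X_T$ ($T\subseteq\S$) which are dense in $\SigmaAlg$ by \Cref{ex:category-with-dense-projective}\ref{itm:ex:category-with-dense-projective-2}, apply the reflector $L$ to obtain objects $LF_\Sigma X_T$ of $\D$, and observe that these remain dense in $\D$: for $D\in\D$ one has $\D(LF_\Sigma X_T, D)\cong\SigmaAlg(F_\Sigma X_T, D)$ naturally, so the canonical-colimit diagram in $\D$ is a reflection of the corresponding one in $\SigmaAlg$. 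Projectivity w.r.t.\ strong epimorphisms of $\D$ is inherited because the forgetful functor preserves and the factorization closure ensures strong epis of $\D$ are strong epis (indeed surjective homomorphisms) of $\SigmaAlg$; a retract-of-free argument then gives projectivity. Since $\S$ can be chosen to consist of free algebras on the sorted sets $X_T$, and a unary signature suffices in \ref{itm:ex:category-with-dense-projective-2}, the final clause is immediate here.

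\smallskip

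\noindent\textbf{From \ref{itm:prop:char-of-cocomplete-finite-dense-1} to \ref{itm:prop:char-of-cocomplete-finite-dense-2}.} This is the main work. Let $\S=\{s_1,\dots,s_n\}$ be the given finite dense set of strong-epi-projective objects. The strategy is the classical ``tautological presentation'': define an $\S$-sorted unary signature $\Sigma$ by letting the operation symbols of sort $s\to t$ be precisely the morphisms of $\D(s,t)$ (so $\Sigma$ has, as operations $s\to t$, one symbol for each $\D$-morphism $s\to t$; composites and identities will be forced by equations, but since we only need a \emph{reflective} subcategory of $\SigmaAlg$ rather than an equational one, we need not impose equations — the reflector will do the quotienting). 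Define a functor $\Phi\colon\D\to\SigmaAlg$ by sending $D$ to the $\Sigma$-algebra with underlying $\S$-sorted set $(\D(s_i,D))_{i}$ and with the operation named by $u\colon s\to t$ acting as precomposition $u^*\colon\D(t,D)\to\D(s,D)$. Wait — precomposition reverses variance, so one should instead index operations by $\D(t,s)$ for an operation $s\to t$, or equivalently take $\Sigma$ to have the arrows of $\D|_\S$ with \emph{reversed} direction; I would fix the bookkeeping so that $\Phi$ is covariant. Functoriality of $\Phi$ is clear. The key claims are: (i) $\Phi$ is fully faithful — this is exactly the density of $\S$, since a $\Sigma$-homomorphism $\Phi D\to\Phi D'$ is precisely a cocone on the canonical $\S$-diagram of $D$ with apex $D'$, hence a unique $\D$-morphism $D\to D'$; (ii) $\Phi$ has a left adjoint — since $\D$ is cocomplete, the left adjoint of $\Phi$ is computed pointwise as a colimit (this is the standard ``nerve/realization'' adjunction, or one invokes an adjoint functor theorem using cocompleteness plus the solution-set condition coming from finiteness of $\S$), and fully-faithfulness of $\Phi$ makes $\D$ a \emph{reflective} subcategory; (iii) the essential image is closed under $(\mathsf{StrongEpi},\mathsf{Mono})$-factorizations — given $f\colon\Phi D\to B$ in $\SigmaAlg$ with factorization $\Phi D\xepito{e} C\xmonoto{m} B$, one must produce an object $D''\in\D$ with $\Phi D''\cong C$; here one factorizes a suitable $\D$-morphism (obtained by adjunction/projectivity) as a strong epi followed by a mono in $\D$ using \Cref{asm:sec5}'s factorization system, applies $\Phi$, and checks via fully-faithfulness and projectivity of $\S$ that surjectivity on each sort is preserved, so that $\Phi$ of the image is (isomorphic to) $C$.

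\smallskip

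\noindent The step I expect to be the main obstacle is \textbf{(iii)}, closure under factorizations: one must show that taking the image inside $\SigmaAlg$ of a map out of some $\Phi D$ again lands in the image of $\Phi$. The delicate point is that $e\colon\Phi D\epito C$ being surjective in each sort $\D(s_i,-)$ does not automatically come from a strong epi in $\D$ — one uses that $\S$ consists of strong-epi-projectives to lift elements of $C$ along $e$, exhibiting $C$ as $\Phi$ applied to the strong-epi part of the $\D$-factorization of the corresponding map; and one uses density again to see the mono part matches $m$. The other steps (existence of the left adjoint, fully-faithfulness) are routine given cocompleteness and density, and the ``unary signature'' clause is free because $\Sigma$ as constructed is unary by design.
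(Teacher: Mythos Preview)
Your approach is essentially the paper's: define $\Phi D = (\D(s,D))_{s\in\S}$ with unary operations given by precomposition (so $\Sigma = \mathsf{Mor}(\S^\op)$), use density for full faithfulness, cocompleteness for the left adjoint, and projectivity of the objects of $\S$ to identify strong epimorphisms in $\D$ with sortwise surjective homomorphisms in $\SigmaAlg$.

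Two small slips to fix. First, in (iii) you overstate what must be shown: closure under factorizations (\Cref{def:closed_under_factorizions}) only concerns morphisms $f$ with \emph{both} endpoints in the essential image of $\Phi$, so you should take $B = \Phi D'$; then full faithfulness hands you a $\D$-morphism $g\colon D\to D'$ with $\Phi g = f$, and it is $g$ that you factorize in $\D$---no ``adjunction/projectivity'' gymnastics needed to produce the $\D$-morphism. Second, you cannot appeal to \Cref{asm:sec5} for the existence of the $(\mathsf{StrongEpi},\mathsf{Mono})$-factorization system in $\D$, since that assumption explicitly cites the present proposition for this fact; instead, once you have established that $\D$ is reflective in $\SigmaAlg$ it is complete and well-powered (the right adjoint preserves monomorphisms), and the factorization system then exists by standard results.
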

\begin{proof}
  \ref{itm:prop:char-of-cocomplete-finite-dense-2} $\Rightarrow$
  \ref{itm:prop:char-of-cocomplete-finite-dense-1} Suppose that
  $\D \subseteq \SigmaAlg$ is a full reflective subcategory and that $\D$ is closed
  under $(\mathsf{StrongEpi}, \mathsf{Mono})$-factorizations.  Cocompleteness of $\D$ is clear because $\SigmaAlg$ is cocomplete. Denote
  by $(-)^@ \colon \SigmaAlg \to \D$ the reflector (i.e.~the left
  adjoint to the inclusion functor $\D\subto \SigmaAlg$) and by
  $\eta_X\colon X\to X^@$ the universal maps. From
  \Cref{ex:category-with-dense-projective} we know $\SigmaAlg$ has
  a finite dense set of projective objects $A_i$, $i \in I$. We prove
  that the objects $A_i^@$, $i \in I$, form a dense set in $\D$.

  To verify the density, let $\A$ be the full subcategory of
  $\SigmaAlg$ on $\{A_i\}_{i \in I}$. For every algebra $D \in \D$ the
  canonical diagram $\A/D \to \SigmaAlg$ assigning $A_i$ to each
  $f\colon A_i \to D$ has the canonical colimit $D$. Since the left
  adjoint $(-)^@$ preserves that colimit, we have that $D = D^@$ is a
  canonical colimit of all $f^@ \colon A_i^@ \to D$ for $f$ ranging
  over $\A/D$, as required. (Indeed, observe that every morphism
  $f\colon A_i^@\to D$ in $\D$ has the form $f=f^@$ because the
  subcategory $\D$ is full and contains the domain and codomain of
  $f$.)
    
Next, we observe that every strong epimorphism $e$ of $\D$ is strongly epic
      in $\SigmaAlg$. Indeed, take the  $(\mathsf{StrongEpi}, \mathsf{Mono})$-factorization $e=m\o e'$ of $e$ in $\SigmaAlg$. Since $\D$ is closed under factorizations, we have that $e',m\in \D$. Moreover, the morphism $m$ is monic in $\D$ because it is monic in $\SigmaAlg$. Since $e$ is a strong (and thus extremal) epimorphism in $\D$, it follows that $m$ is an isomorphism. Thus $e\cong e'$ is a strong epimorphism in $\SigmaAlg$. Since $\SigmaAlg$ is complete, this is equivalent to being an extremal epimorphism.

      Since each $A_i$ is projective w.r.t.\ strong epimorphisms in
      $\SigmaAlg$, it thus follows that $A_i^@$ is projective w.r.t.\ strong
      epimorphisms $e\colon B \epito C$ in $\D$. Indeed, given a morphism $h\colon
      A_i^@ \to C$, compose it with the universal arrow $\eta\colon A_i \to
      A_i^@$. Thus, $h\o\eta$ factorizes in $\SigmaAlg$ through $e$:
      \[
        \xymatrix{
          A_i \ar[r]^{\eta} \ar@{-->}[d]_{k}
          &
          A_i^@ \ar[d]^{h}
          \ar[ld]|*+{\labelstyle\overline k}
          \\
          B \ar@{->>}[r]_{e} & C
        }
      \]
      The unique morphism $\overline{k}\colon A_i^@ \to B$ of $\D$ with $k =
      \overline{k} \o \eta$ then fulfils the desired equality $h = e \o
      \overline{k}$ since $h \o \eta = e \o \overline{k} \o \eta$. 

 \medskip\noindent   \ref{itm:prop:char-of-cocomplete-finite-dense-1}$\Rightarrow$\ref{itm:prop:char-of-cocomplete-finite-dense-2}
      Let $\mathcal{S}$ be a finite dense set of objects projective w.r.t.\ strong
      epimorphisms, and consider $\mathcal{S}$ as a full subcategory of $\D$.
      Define an $\mathcal{S}$-sorted signature of unary symbols
      \[
        \Sigma
        =
        \mathsf{Mor}(\mathcal{S}^\op)
        \setminus \{\,\id_s\mid s \in \mathcal{S} \,\}.
      \]
      Every morphism $\sigma\colon s \to t$ of $\mathcal{S}^\op$ has
      arity as indicated: the corresponding unary operation has inputs
      of sort $s$ and yields values of sort $t$. Define a functor
      \[
        E \colon \D \to \SigmaAlg
      \]
      by assigning to every object $D$ the $\mathcal{S}$-sorted set
      with sorts
      \[
        (ED)^s = \D(s, D)\quad\text{for $s \in \mathcal{S}$}
      \]
      endowed with the operations 
      \[
        \sigma_{ED} \colon \D(s, D) \to \D(s', D)
      \]
      given by precomposing with $\sigma \colon s' \to s$ in
      $\mathcal{S} \subseteq \D$. To every
      morphism $f\colon D_1 \to D_2$ of $\D$ assign the $\Sigma$-homomorphism
      $Ef$ with sorts
      \[
        (Ef)^s \colon \D(s, D_1) \to \D(s, D_2)
      \]
      given by postcomposing with $f$. To say that $\mathcal{S}$ is a
      dense set is equivalent to saying that $E$ is full and
      faithful~\cite[Prop.~1.26]{Adamek1994}. Moreover, since $\D$ is
      cocomplete, $E$ is a right
      adjoint~\cite[Prop.~1.27]{Adamek1994}. Thus, $\D$ is equivalent
      to a full reflective subcategory of $\SigmaAlg$.

      Next we show that $\D$ has the factorization system
      $(\mathsf{StrongEpi}, \mathsf{Mono})$.  Indeed, being reflective
      in $\SigmaAlg$, it is a complete category. Moreover, $\D$ is
      well-powered because the right adjoint $\D\subto \SigmaAlg$
      preserves monomorphisms and $\SigmaAlg$ is well-powered.
      Consequently, the factorization system
      exists~\cite[Cor.~14.21]{AdamekEA09}.

      To prove closure under factorizations, observe first that a
      morphism $e \colon D_1 \to D_2$ is strongly epic in $\D$ iff
      $Ee$ is strongly epic in $\SigmaAlg$. Indeed, if $e$ is strongly
      epic, then $Ee$ has surjective sorts $(Ee)^s$ because $s$ is
      projective w.r.t.\ $e$. Thus, $Ee$ is a strong epimorphism in
      $\SigmaAlg$. Conversely, if $Ee$ is strongly epic in
      $\SigmaAlg$, then for every commutative square $g\o e = m\o f$
      in $\D$ with $m$ monic, the morphism $Em$ is monic in
      $\SigmaAlg$ because $E$ is a right adjoint, and thus a diagonal
      exists.

      \sloppypar
      Now let$f\colon A\to B$ be a morphism in $\D$ and let
      $f=\xymatrix@1{A \ar@{->>}[r]^e & C \ar@{ >->}[r]^m & B}$ be its
      $(\mathsf{StrongEpi}, \mathsf{Mono})$-factorization in
      $\D$. Thus $C\in \D$ and since by the above argument $Ee$ and
      $Em$ are strong epimorphisms and monomorphisms in $\SigmaAlg$,
      respectively, $C$ is the image of $f$ w.r.t. to the
      factorization system of $\SigmaAlg$.
%
\end{proof}
\begin{expl}
  \begin{enumerate}
    \item If $\D =\Set$, we can take $S = \{ \mathbf{1} \}$ where $\mathbf{1}$
      is a singleton set. The one-sorted signature $\Sigma$ in the above
      proof is empty, thus, $\SigmaAlg = \Set$. 
    
    \item In the category $\mathbf{Gra}$ of graphs we can take
      $S = \{G_1, G_2\}$, see
      \Cref{ex:category-with-dense-projective}\ref{itm:ex:category-with-dense-projective-3}.
      Here $\Sigma$ is a $2$-sorted signature with two operations
      $s, t\colon G_2 \to G_1$. A graph $G = (V, E)$ is represented as
      an algebra $A$ with sorts $A_{G_1} = V$ and $A_{G_2} = E$ and $s, t$
      given by the source and target of edges, respectively. More
      precisely, $\mathsf{Gra}$ is equivalent to the full subcategory
      of all $\Sigma$-algebras $(V,E)$ where for all $e,e'\in E$ with
      $s(e)=s(e')$ and $t(e)=t(e')$, one has $e=e'$.
  \end{enumerate}
\end{expl}

\begin{assumption}
  From now on we assume that
  \begin{enumerate}
  \item The category $\D$ is a full reflective subcategory of
    $\Sigma$-algebras closed under
    $(\mathsf{StrongEpi}, \mathsf{Mono})$-factorizations; the reflecting of a $\Sigma$-algebra $A$ into $\D$ is denoted by $A^@$. 

  \item The category $\Df$ consists of all $\Sigma$-algebras in
    $\D$ of finite cardinality in all sorts.
  \end{enumerate}  
\end{assumption}
In the case where the arities of operations in $\Sigma$ are bounded,
our present choice of $\Df$ corresponds well with the previous one in
\Cref{asm:sec5}: choosing the set $\S$ as in
\Cref{ex:category-with-dense-projective}\ref{itm:ex:category-with-dense-projective-2},
a $\Sigma$-algebra $D$ has finite cardinality iff the set of all
morphisms from $s$ to $D$ (for $s \in S$) is finite.

\begin{notation}
  For the profinite monad $\hatT$ of \Cref{def:profinitemonad} we denote by 
  \[
    U\colon (\Pro\Df)^{\hatT} \to \Set^\S
  \]
  the forgetful functor that assigns to a $\hatT$-algebra $(A, \alpha)$  the underlying $\S$-sorted set of $A$.
\end{notation}

Recall from \Cref{coro:pro-Df} that $\Pro\Df$ is a full subcategory of
$\Stone(\SigmaAlg)$, the category of Stone $\Sigma$-algebras and continuous
homomorphisms, closed under limits. From \Cref{ex:profinite-factorization} and  \Cref{prop:factorsation-system-subcategory}, we get the following 

\begin{lem}\label{lem:strongepimonoprofinite}
  The factorization system $(\mathsf{StrongEpi}, \mathsf{Mono})$ on
  $\D$ is profinite and yields the factorization system on $\Pro\Df$
  given by
  \begin{align*}
    \widehat{\E} & = \text{continuous homomorphisms surjective in every
      sort, and}\\
    \widehat{\M} & = \text{continuous homomorphisms injective in every
      sort.}
  \end{align*}
\end{lem}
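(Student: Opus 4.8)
The plan is to recognise the statement as a direct instance of \Cref{ex:profinite-factorization} and \Cref{prop:factorsation-system-subcategory}. First I would observe that, since $\Sigma$ is an algebraic signature, $\SigmaAlg$ is precisely the category $\SigStr$ of \Cref{prop:procomp-sigstr} for an empty set of relation symbols. Consequently the factorization system $(\E,\M)$ of \Cref{remark:factorization} on $\SigmaAlg$ consists of the surjective homomorphisms and the injective homomorphisms, and the qualifier ``relation-reflecting'' is vacuous. Likewise, $\Stone(\SigmaAlg)$ is the category of Stone $\Sigma$-algebras, and, by \Cref{coro:pro-Df} (as recalled above), $\Pro\Df$ is its full subcategory of profinite $\D$-structures.

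Next I would check that the abstract factorization system $(\mathsf{StrongEpi},\mathsf{Mono})$ on $\D$ is exactly the restriction of $(\E,\M)$. The monomorphisms of $\D$ are the injective homomorphisms because the inclusion $\D\hookrightarrow\SigmaAlg$ is a full right adjoint and hence preserves and reflects monos; and the strong epimorphisms of $\D$ are the surjective homomorphisms, which is precisely the argument already carried out in the proof of \Cref{prop:char-of-cocomplete-finite-dense}, using that $\mathcal{S}$ is a dense set of $\E$-projective objects. Moreover $\D$ is closed under limits, being reflective, and closed under $(\E,\M)$-factorizations by assumption.

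With this in place, \Cref{ex:profinite-factorization} yields that $(\mathsf{StrongEpi},\mathsf{Mono})$ is a profinite factorization system on $\D$ — the profiniteness needs only that cofiltered limits in $\D$ are created sortwise in $\Set$ (\Cref{remark:SigStr-limits}) together with \Cref{lem:epi} — so that the factorization system $(\widehat\E,\widehat\M)$ of $\Pro\Df$ is defined; and \Cref{prop:factorsation-system-subcategory} identifies $\widehat\E$ with the surjective morphisms and $\widehat\M$ with the relation-reflecting injective morphisms of $\Pro\Df\subseteq\Stone(\SigmaAlg)$. Combining this with the first paragraph, $\widehat\E$ consists of the continuous homomorphisms surjective in every sort and $\widehat\M$ of the continuous homomorphisms injective in every sort, as claimed.

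The one point requiring care is that \Cref{ex:profinite-factorization} and \Cref{prop:factorsation-system-subcategory} are stated for subcategories of $\SigStr$ closed under subobjects, whereas a reflective subcategory closed under $(\E,\M)$-factorizations need not be closed under arbitrary subalgebras. Inspecting those proofs, however, shows that closure under subobjects enters only to ensure that the $(\E,\M)$-factorization of a morphism of $\D$ lies again in $\D$; here this is exactly the assumed closure under $(\mathsf{StrongEpi},\mathsf{Mono})$-factorizations, and, together with closure under limits coming from reflectivity, both proofs then apply unchanged. I expect this hypothesis-matching, rather than any genuinely new argument, to be the main obstacle.
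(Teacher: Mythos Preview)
Your proposal is correct and follows the same route as the paper, which simply cites \Cref{ex:profinite-factorization} and \Cref{prop:factorsation-system-subcategory} without further comment. Your additional care in checking that closure under $(\mathsf{StrongEpi},\mathsf{Mono})$-factorizations together with reflectivity suffices in place of the stated ``closed under products and subobjects'' hypothesis is a detail the paper glosses over, and your observation that those proofs only use subobject-closure to keep factorizations inside $\D$ is exactly the right justification.
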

\begin{notation}
  Let $X$ be a finite $\S$-sorted set of variables. 
  \begin{enumerate} 
    \item Denote by
      \[
        F_\Sigma X 
      \]
      the free $\Sigma$-algebra of \emph{terms}. It is carried by the smallest
      $\S$-sorted set containing $X$ and such that for every operation symbol
      $\sigma\colon s_1, \ldots, s_n \to s$ and every $n$-tuple of terms $t_i$ of
      sorts $s_i$ we have a term
      \[
        \sigma(t_1, \ldots, t_n) \quad \text{of sort $s$}. 
      \]

    \item For the reflection $(F_\Sigma X)^@$, the free object of $\D$ on $X$, we
      put
      \[
        X^\oplus = \widehat{(F_\Sigma X)^@}.
      \]
      This is a free object of $\Pro\Df$ on $X$, see \Cref{lem:the-left-adjoint-to-V}.

    \item Let $(A, \alpha)$ be a finite $\MT$-algebra. An \emph{interpretation}
      of the given variables in $(A,\alpha)$ is an $S$-sorted function $f$ from $X$ to the
      underlying sorted set $U(A, \alpha)$. We denote by 
      \[
        f^@ \colon (F_\Sigma X)^@ \to A
      \]
      the corresponding morphism of $\D$. It extends to a unique homomorphism of
      $\hatT$-algebras (since $(A, \alpha^+)$ is a $\hatT$-algebra by
      \Cref{prop:finite-T-algbras-are-hatT-algebras}) that we denote by
      \[
        f^\oplus \colon \left(\hatt X^\oplus, \mu_{X^\oplus}\right) \to (A,
        \alpha^+).
      \]
  \end{enumerate}
\end{notation}

\begin{defn}
  A \emph{profinite term} over a finite $S$-sorted set $X$ (of variables) is an
  element of $\hatt X^\oplus$. 
\end{defn}

\begin{expl}
  Let $\D = \Set$ and $TX = X^*$ be the monoid monad. For every finite set $X
  =X^@$ we have that $\hatt X^\oplus$ is the set of profinite words
  over $X$ (see \Cref{ex:profinitewords}).
\end{expl}

\begin{defn}\label{D:eqterm}
  Let $t_1, t_2$ be profinite terms of the same sort in $\hatt X^\oplus$. A finite
  $\MT$-algebra is said to \emph{satisfy the equation} $t_1 = t_2$ provided that
  for every interpretation $f$ of $X$ we have $f^\oplus(t_1) = f^\oplus(t_2)$. 
\end{defn}
\begin{remark}
  In order to distinguish equations being pairs of profinite terms
  according to \Cref{D:eqterm} from equations being quotients
  according to \Cref{def:hattsatisfaction}, we shall sometimes call
  the latter \emph{equation morphisms}.
\end{remark}
\begin{thm}[Generalized Reiterman Theorem for Monads on
  $\Sigma$-algebras]\label{thm:reiterman-for-monads-on-algebras}
  Let $\D$ be a full reflective subcategory of $\SigmaAlg$ closed under
  $(\mathsf{StrongEpi}, \mathsf{Mono})$-factorizations, and let $\MT$ be a monad
  on $\D$ preserving strong epimorphisms. Then a collection of finite $\MT$-algebras is a pseudovariety iff it can be
  presented by equations between profinite terms.
\end{thm}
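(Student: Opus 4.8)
The strategy is to deduce this theorem from the Generalized Reiterman Theorem for Monads (\Cref{thm:reiterman-for-monads}) by showing that, in the present setting, $\hatT$-equation morphisms and equations between profinite terms present exactly the same pseudovarieties. Since \Cref{prop:char-of-cocomplete-finite-dense} tells us $\D$ carries the $(\mathsf{StrongEpi},\mathsf{Mono})$-factorization system and this is profinite by \Cref{lem:strongepimonoprofinite}, the hypotheses of \Cref{thm:reiterman-for-monads} are met, so every pseudovariety in $\Df^\MT$ is presentable by $\hatT$-equation morphisms, and conversely. It therefore suffices to prove the following two implications: (a) every equation morphism $e\colon (\hatt X^\oplus,\hatmu_{X^\oplus})\epito (E,\varepsilon)$ with $X$ a finite $\S$-sorted set is equivalent, as far as satisfaction by finite $\MT$-algebras goes, to a set of equations between profinite terms; and (b) conversely, every equation $t_1=t_2$ between profinite terms in $\hatt X^\oplus$ is equivalent to an equation morphism of that special form. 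By \Cref{R:r-for-monads}, the pseudovariety can be presented by $\hatT$-equation morphisms whose domains are precisely the free algebras $(\hatt\widehat{Y},\hatmu_{\widehat Y})$ for $Y$ ranging over $\E$-projective objects of $\D$; and since $\D$ has enough projectives of the form $(F_\Sigma X)^@$ (these are $\mathsf{StrongEpi}$-projective by \Cref{ex:category-with-dense-projective}), we may take $\Var$ to consist of the algebras $(\hatt X^\oplus,\hatmu_{X^\oplus})$ with $X$ a finite $\S$-sorted set. This reduces everything to relating morphisms out of $\hatt X^\oplus$ with pairs of elements of $\hatt X^\oplus$.

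\textbf{Step (b): term equations as equation morphisms.} Given profinite terms $t_1,t_2\in\hatt X^\oplus$ of the same sort $s$, observe that an element of sort $s$ is the same as a morphism $\mathbb 1_s\to U(\hatt X^\oplus)$ in $\Set^\S$, hence (by freeness of $\hatt X^\oplus$ on $X$ together with the universal property of the free $\hatT$-algebra) extends canonically. More directly: interpret the pair $(t_1,t_2)$ via the parallel pair of $\hatT$-homomorphisms $\hatt X^\oplus\rightrightarrows \hatt X^\oplus$ determined on generators appropriately, or simply take the coequalizer in $(\Pro\Df)^\hatT$ of the two $\hatT$-homomorphisms $(\hatt \mathbb 1_s^\oplus,\hatmu)\rightrightarrows(\hatt X^\oplus,\hatmu)$ picking out $t_1$ and $t_2$; its $\widehat\E$-part $e\colon \hatt X^\oplus\epito E$ is the desired equation morphism. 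One then checks, using \Cref{def:hattsatisfaction} and the fact that finite $\MT$-algebras coincide with finite $\hatT$-algebras (\Cref{prop:finite-T-algbras-are-hatT-algebras}), that a finite $\MT$-algebra $(A,\alpha)$ is injective with respect to $e$ iff every interpretation $f\colon X\to U(A,\alpha)$ satisfies $f^\oplus(t_1)=f^\oplus(t_2)$, i.e.\ iff $(A,\alpha)$ satisfies $t_1=t_2$ in the sense of \Cref{D:eqterm}. This is a routine unwinding of the definitions once one knows $e$ is the coequalizer/$\widehat\E$-image of the pair.

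\textbf{Step (a): equation morphisms as term equations.} Conversely, let $e\colon(\hatt X^\oplus,\hatmu)\epito(E,\varepsilon)$ be an equation morphism in $\widehat\E^{\hatT}$. Since $\widehat\E$ consists of the continuous homomorphisms surjective in every sort (\Cref{lem:strongepimonoprofinite}) and $X^\oplus=\widehat{(F_\Sigma X)^@}$, I would describe $E$ as a quotient of $\hatt X^\oplus$ and then extract the ``kernel'': consider the set $R$ of all pairs $(t_1,t_2)$ of elements of $\hatt X^\oplus$ (of equal sort) that are identified by $e$, i.e.\ the kernel pair of $e$ computed at the level of $\S$-sorted sets via $U$. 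One shows that a finite $\MT$-algebra $(A,\alpha)$ satisfies all the term equations in $R$ iff it is injective with respect to $e$. The nontrivial direction is: if $(A,\alpha)$ satisfies every $t_1=t_2$ in $R$, then any $\hatT$-homomorphism $g\colon \hatt X^\oplus\to A$ factors through $e$. For this one uses that $g$ is determined by its restriction to the generators $X$, hence by an interpretation $f$ with $g=f^\oplus$; the hypothesis says $f^\oplus$ merges every pair merged by $e$; and since $A$ is finite (so its underlying sorted set is finite) while $e$ is a regular epi in $(\Pro\Df)^\hatT$ (being a $\widehat\E$-quotient, and $\widehat\E$-morphisms are the surjective homomorphisms, which are regular epis here), the universal property of the quotient gives the factorization $g=h\o e$. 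Strictly, to know $e$ is a \emph{regular} epi and that "merging the kernel-pairs suffices" one invokes that in $\Pro\Df\subseteq\Stone(\SigmaAlg)$ surjective homomorphisms are quotient maps; this is where the concrete description of $\Pro\Df$ from \Cref{coro:pro-Df} is used.

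\textbf{Main obstacle.} The delicate point is Step (a): making precise that an $\widehat\E$-quotient $e$ of $\hatt X^\oplus$ is determined by which pairs of \emph{elements} (i.e.\ global points, or rather $\mathbb 1_s$-points) it identifies, so that "satisfying the associated set of term equations" is genuinely equivalent to injectivity with respect to $e$. In the topological model this is the statement that $E$ carries the quotient topology and quotient algebra structure inherited from $\hatt X^\oplus$ along a surjection, so that a continuous homomorphism $g$ out of $\hatt X^\oplus$ into a finite (discrete) algebra descends along $e$ exactly when it is constant on $e$-fibres — and here finiteness of the target is essential, as is the fact (from \Cref{lem:strongepimonoprofinite}) that $\widehat\E$ is precisely the surjections. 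Once this descent property is in hand, both directions are bookkeeping with \Cref{prop:finite-T-algbras-are-hatT-algebras} and \Cref{R:r-for-monads}, and the theorem follows by combining (a), (b) with \Cref{thm:reiterman-for-monads}.
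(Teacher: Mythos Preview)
Your overall architecture matches the paper's: apply \Cref{thm:reiterman-for-monads} via \Cref{R:r-for-monads} with $\Var$ the free objects $(F_\Sigma X)^@$, then translate between equation morphisms and term equations. Your Step~(a) is the paper's argument in outline: associate to $e$ its set-theoretic kernel and show equivalence of satisfaction. The paper, however, makes the descent step concrete rather than invoking ``regular epi'': it defines the factorization $k$ in $\Set^{\mc S}$, verifies directly that $k$ preserves the $\Sigma$-operations using surjectivity of $Ue$, and then checks that $k$ is a $\hatT$-homomorphism using that $\hatt e$ is epic (\Cref{coro:hatT-preserves-quotients}). This is cleaner than your appeal to ``surjections are regular epis in $(\Pro\Df)^{\hatT}$'', a claim you do not justify and which is not obvious---the forgetful functor to $\Set^{\mc S}$ need not reflect regular epis, and nothing in the paper establishes that $(\Pro\Df)^{\hatT}$ is exact in the required sense.

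Your Step~(b) is where you diverge, and where there is a genuine gap. You propose to coequalize a parallel pair $\hatt\,\mathbf 1_s^\oplus \rightrightarrows \hatt X^\oplus$ in $(\Pro\Df)^{\hatT}$, but you have no reason to believe this coequalizer exists: $\hatT$ is cofinitary, not finitary, and no result in the paper furnishes colimits in its Eilenberg--Moore category. Even granting existence, you would still need the coequalizer map to lie in $\widehat\E^{\hatT}$. The paper sidesteps this entirely: rather than converting each term equation into an equation morphism, it verifies \emph{directly} that any class presented by profinite term equations is closed under finite products, subobjects, and quotients (each a short check using that $f^\oplus$ is compatible with products, that $\M$-morphisms are sortwise injective, and that $\E$-morphisms split in $\Set^{\mc S}$). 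With that in hand, the equivalence becomes a cycle: term-equation-presentable $\Rightarrow$ pseudovariety $\Rightarrow$ equation-morphism-presentable (by \Cref{thm:reiterman-for-monads}) $\Rightarrow$ term-equation-presentable (by your Step~(a)). This avoids any cocompleteness hypotheses on $(\Pro\Df)^{\hatT}$.
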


\begin{proof}
  \begin{enumerate}
  \item We first verify that all assumptions needed for applying
    \Cref{thm:reiterman-for-monads} and \Cref{R:r-for-monads}
    are satisfied. Put
    \[
      \Var \defeq \set{(F_\Sigma X)^@}{\text{$X$ a finite $\mc{S}$-sorted set} },
    \]
    the set of all free objects of $\D$ on finitely many
    generators. We know from \Cref{lem:strongepimonoprofinite} that
    the factorization system $(\mathsf{StrongEpi}, \mathsf{Mono})$ is
    profinite.
    \begin{enumerate}
    \item Every object $(F_\Sigma X)^@$ of $\Var$ is projective
      w.r.t.~strong epimorphisms. Indeed, given a strong epimorphism
      $e\colon A \epito B$ in $\D$, it is a strong epimorphism in
      $\SigmaAlg$, i.e.~$e$ has a splitting $i\colon B \monoto A$ in
      $\Set^{\mc{S}}$ with $e \o i = \id$. For every morphism
      $f\colon (F_\Sigma X)^@\to B$ of $\D$ we are to prove that $f$
      factorizes through $e$. The $\mc{S}$-sorted function $X \to A$
      which is the domain-restriction of
      $i \o f\colon (F_\Sigma X)^@\to A$ has a unique extension to a
      morphism $g\colon (F_\Sigma X)^@\to A$ of $\D$. It is easy to
      see that $e \o i = \id$ implies $e \o g = f$, as required.
        
    \item Every object $D \in \Df$ is a strong quotient
      $e\colon (F_\Sigma X)^@\epito D$ of some $(F_\Sigma X)^@$ in
      $\Var$. Indeed, let $X$ be the underlying set of $D$. Then the
      underlying function of $\id\colon X\to D$ is a split epimorphism
      in $\Set^{\mc{S}}$, hence, $\id^@\colon (F_\Sigma X)^@\epito D$
      is a strong epimorphism by \cite[Prop. 14.11]{AdamekEA09}.
    \end{enumerate}

  \item\label{itm::thm:reiterman-for-monads-on-algebras-2} By applying
    \Cref{thm:reiterman-for-monads} and \Cref{R:r-for-monads}, all we need to prove is that
    the presentation of finite $\MT$-algebras by equation morphisms 
    \[
      e \colon (\hatt X^\oplus, \hatmu_{X^\oplus}) \epito (A, \alpha),
      \quad\text{$X$ finite and $e$ strongly epic},
    \]
    is equivalent to their presentation by equations between profinite
    terms.
    
    \begin{enumerate}
    \item Let $\V$ be a collection in $\Df^\MT$ presented by equations
      $t_i = t_i'$ in $\hatt X^\oplus_i$, $i \in I$. Using
      \Cref{thm:reiterman-for-monads}, we just need proving that
      $\V$ is a pseudovariety:
      \begin{enumerate}
      \item Closure under finite products
        $\prod_{k \in K} (A_k, \alpha_k)$: Let $f$ be an
        interpretation of $X_i$ in the product.  Then we have
        $f = \< f_k \>_{k \in K}$ for interpretations $f_k$ of $X_i$
        in $(A_k, \alpha_k)$. By assumption
        $f_k^\oplus(t_i) = f_k^\oplus(t_i')$ for every $k \in
        K$. Since the forgetful functor from $\hatT$-algebras to
        $\Set^{\mc{S}}$ preserves products, we have
        $f^\oplus = \< f_k^\oplus \>_{k \in K}$, hence
        $f^\oplus (t_i) = f^\oplus (t_i')$.
            
        \item Closure under subobjects
          $m \colon (A, \alpha) \monoto (B, \beta)$: Let $f$ be an
          interpretation of $X_i$ in $(A, \alpha)$.  Then
          $g = (Um) \o f$ is an interpretation in $(B, \beta)$, thus
          $g^\oplus(t_i) = g^\oplus (t_i')$. Since $m$ is a
          homomorphism of $\hatT$-algebras, we have
          $g^\oplus = m \o f^\oplus$. Moreover, $m$ is monic in every sort,
          whence $f^\oplus(t_i) = f^\oplus(t_i')$.

        \item Closure under quotients
          $e\colon (B, \beta) \epito (A, \alpha)$: Let $f$ be an
          interpretation of $X_i$ in $A$. Since $Ue$ is a split
          epimorphism in $\Set^{\mc{S}}$, we can choose
          $m \colon UA \to UB$ with $(Ue) \o m = \id$. Then
          $g = m \o f$ is an interpretation of $X_i$ in $(B, \beta)$,
          thus, $g^\oplus(t_i) = g^\oplus(t_i')$.  Since $e$ is a
          homomorphism of $\hatT$-algebras, we have
          \[
            e\o g^\oplus = (Ue \o g)^\oplus = (Ue \o m \o f)^\oplus =
            f^\oplus.
          \]
          Using this, we obtain $f^\oplus (t_i) = f^\oplus (t_i')$. 
        \end{enumerate}
        
      \item For every equation morphism
        \[
          e \colon (\hatt X^\oplus, \hatmu_{X^\oplus}) \epito (A, \alpha)
        \]
        we consider the set of all profinite equations $t = t'$ where
        $t, t' \in \hatt X^\oplus$ have the same sort and fulfil
        $e(t) = e(t')$. We prove that given a finite algebra
        $(B, \beta)$, it satisfies $e$ iff it satisfies all of those
        equations.
        \begin{enumerate}
        \item Let $(B, \beta)$ satisfy $e$ and let $f$ be an interpretation
          of $X$ in it. Then the homomorphism $f^\oplus$ factorizes through
          $e$:
          \[
            \xymatrix{
              (\hatt X^\oplus, \hatmu_{X^\oplus}) \ar[r]^-{f^\oplus}
              \ar@{->>}[rd]_{e} & (B, \beta)\\
              & (A, \alpha) \ar[u]_{h} 
            }
          \]
          Thus, $f^\oplus(t) = f^\oplus(t')$ whenever $e(t) = e(t')$, as
          required.  
          
        \item Let $(B, \beta)$ satisfy the given equations $t=t'$. We
          prove that every homomorphism $h\colon (\hatt X^\oplus,
          \hatmu_{X^\oplus}) \to (B, \beta)$ factorizes through the
          given $e$, which lies in $(\Pro\Df)^{\hatT}$. We clearly have 
          \[
            h = f^\oplus 
          \]
          for the interpretation $f\colon X \to U(B, \beta)$ obtained by the
          domain-restriction of $Uh$. Consequently, for all $t, t' \in \hatt
          X^\oplus$ of the same sort, we know that
          \[
            e(t) = e(t')
            \quad\text{implies}\quad
            h(t) = h(t'). 
          \]
          This tells us precisely that $Uh$ factorizes in
          $\Set^{\mc{S}}$ through $Ue$:
          \[
            \xymatrix@C-3em{
              & U(\hatt X^\oplus, \hatmu_{X^\oplus}) \ar@{->>}[ld]_{Ue}
              \ar[rd]^{Uh} \\
              U(A, \alpha) \ar[rr]_{k} & & U(B, \beta)
            }
          \]
          It remains to prove that $k$ is a homomorphism of
          $\hatT$-algebras. Firstly, $k$ preserves the operations of
          $\Sigma$ and is thus a morphism $k \colon A \to B$ in
          $\D$. This follows from $Ue$ being epic in $\Set^{\mc{S}}$:
          given $\sigma\colon s_1, \ldots, s_n \to s$ in $\Sigma$ and
          elements $x_i$ of sort $s_i$ in $A$, choose $y_i$ of sort
          $s_i$ in $U(\hatt X^\oplus, \hatmu_{X^\oplus})$ with
          $Ue(y_i) = x_i$. Using that $e$ and $h$ are
          $\Sigma$-homomorphism we obtain the desired equation
          \[
            k(\sigma_A(x_i))
            =
            k(\sigma_A(Ue(y_i))
            =
            k \o Ue(\sigma(y_i))
            =
            Uh(\sigma(y_i))
            =
            \sigma_B(h(y_i))
            =
            \sigma_B(k(x_i)).
          \]
          Moreover, $\hatt e$ is epic by
          \Cref{lem:strongepimonoprofinite}. In the following
          diagram
          \[
            \xymatrix@C+1pc{
              \hatt\hatt X^\oplus
              \ar[r]^-{\hatmu_{X^\oplus}}
              \ar[d]_{\hatt e}
              \ar `l[d] `[dd]_{\hatt h} [dd]
              &
              \hatt X^\oplus
              \ar[d]^{e}
              \ar `r[d] `[dd]^{h} [dd]
              \\
              \hatt A \ar[r]^-{\alpha} \ar[d]_{\hatt k}
              &
              A \ar[d]^{k}
              \\
              \hatt B \ar[r]_-{\beta}
              &
              B
            }
          \]
          the outside and upper square commute because $h$ and $e$ are
          a homomorphism of $\hatT$-algebras, respectively, and the
          left hand and right hand parts commute because $k \o e = h$. Since
          $\hatt e$ is epic, it follows that the lower square
          also commutes.\qedhere
        \end{enumerate}
      \end{enumerate}
    \end{enumerate}
  \end{proof}

\begin{remark}
  We now show that profinite terms are just another view of the implicit
  operations that Reiterman used in his paper~\cite{Reiterman1982}. We start
  with a one-sorted signature $\Sigma$ (for notational simplicity) and then
  return to the general case. We denote by 
  \[
    W\colon \Df^\MT \to \Set
  \]
  the forgetful functor assigning to every finite algebra $(A, \alpha)$ the
  underlying set $A$.
\end{remark}

\begin{defn}
  An $n$-ary \emph{implicit operation} is a natural transformation
  $\varrho\colon W^n \to W$ for $n \in \mathbb{N}$. Thus if 
  \[
    U \colon \Df \to \Set
  \]
  denotes the forgetful functor, then $\varrho$ assigns to every finite
  $\MT$-algebra $(A, \alpha)$ an $n$-ary operation on $UA$ such that every
  homomorphism in $\Df^\MT$ preserves that operation. 
\end{defn}
For the case of finitary $\Sigma$-algebras, i.e.~finitary monads $\MT$
on $\Set$, the above concept is due to Reiterman
{\cite[Sec.~2]{Reiterman1982}}.

\begin{expl}\label{ex:implicit-operation}
  Let $\D = \Set$ and $TX = X^*$ be the monoid monad. Every element $x$ of a
  finite monoid $(A, \alpha)$ has a unique idempotent power $x^k$ for some $k>0$, denoted by $x^\omega$. Since monoid morphisms preserve idempotent powers, this yields a
  unary implicit operation $\varrho$ with components $\varrho_{(A, a)}\colon x
  \mapsto x^\omega$. 
\end{expl}

\begin{notation}\label{nota:n-ary-implicit-operation}
  Consider $n$ as the set $\{0, \ldots, n-1\}$. Every profinite term $t \in \hatt n^\oplus$ defines an $n$-ary implicit operation $\varrho_t$ as
  follows: Given a finite $\MT$-algebra $(A, \alpha)$ and an $n$-tuple $f\colon n \to
  UA$, we get the homomorphism $f^\oplus \colon (\hatt n^\oplus,
  \hatmu_{n^\oplus}) \to (A, \alpha)$, and $\varrho_t$ assigns to $f$ the value
  \[
    \varrho_t(f) = f^\oplus (t).
  \]
  The naturality of $\varrho_t$ is easy to verify.
\end{notation}

\begin{lem}\label{lem:implicit-operation=profinite-terms}
  Implicit $n$-ary operations correspond bijectively to profinite terms in
  $\hatt n^\oplus$ via $t \mapsto \varrho_t$. 
\end{lem}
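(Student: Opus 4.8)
The plan is to realise both sides of the asserted correspondence as the \emph{same} limit in $\Set$. By \Cref{cons:profinite-monad}\ref{itm:cons:profinite-monad-a}, $\hatt n^\oplus = \lim Q_{n^\oplus}$, where $Q_{n^\oplus}\colon n^\oplus/K \to \Pro\Df$ sends $a\colon n^\oplus\to K(A,\alpha)$ to $A$ and thus factors as $n^\oplus/K \xrightarrow{\pi} \Df^\MT \xrightarrow{K} \Pro\Df$ for the evident projection $\pi$. Since $n^\oplus = \widehat{(F_\Sigma n)^@}$ is the free object of $\Pro\Df$ on the set $n$ (\Cref{lem:the-left-adjoint-to-V}) and $K$ is the full inclusion sending a finite $\MT$-algebra to its underlying object, transposing along the free--forgetful adjunction identifies $n^\oplus/K$, over $\Df^\MT$, with the category $n/W$ whose objects are pairs $((A,\alpha), f\colon n \to WA)$ and whose morphisms $((A,\alpha),f)\to((B,\beta),g)$ are the $\MT$-homomorphisms $h$ with $Wh\circ f = g$. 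This category $n/W$ is precisely the category of elements of the functor $W^n\colon \Df^\MT \to \Set$, $(A,\alpha)\mapsto (WA)^n$.

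First I would compute underlying sets. By \Cref{lem:hattX-is-limit-of-finite-quotients}, $\hatt n^\oplus = \hatt\widehat{(F_\Sigma n)^@}$ is a \emph{cofiltered} limit of finite $\MT$-algebras --- namely of the finite quotients of $(T(F_\Sigma n)^@,\mu)$ --- sharing its limit cone with $Q_{n^\oplus}$. Cofiltered limits in $\Pro\Df \subseteq \Stone(\SigmaAlg)$ are formed sortwise (\Cref{rem:stronesstr_limits}, \Cref{coro:pro-Df}), so the forgetful functor $\Pro\Df\to\Set$ preserves this limit; as $W$ is the composite $\Df^\MT\xrightarrow{K}\Pro\Df\to\Set$, on underlying sets this gives $\hatt n^\oplus \cong \lim(\,n/W \xrightarrow{\pi} \Df^\MT \xrightarrow{W} \Set\,)$. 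On the other hand, the standard identity $\mathrm{Nat}(W^n,W) = \int_{(A,\alpha)}\Set(W^n(A,\alpha),W(A,\alpha)) \cong \lim(\,n/W \xrightarrow{\pi}\Df^\MT\xrightarrow{W}\Set\,)$ exhibits the set of $n$-ary implicit operations as that very same limit. Composing the two identifications yields a canonical bijection between $\hatt n^\oplus$ and the implicit $n$-ary operations.

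It remains to check that this canonical bijection is $t\mapsto\varrho_t$. The limit projection of $\hatt n^\oplus = \lim Q_{n^\oplus}$ at the object $\tilde f\colon n^\oplus \to A$ corresponding to $f\colon n\to WA$ is the morphism $\alpha^+_{\tilde f}$ of \Cref{cons:profinite-monad}\ref{itm:cons:profinite-monad-a}. By the unit triangle and multiplication square in \Cref{cons:profinite-monad}\ref{itm:cons:profinite-monad-c}, $\alpha^+_{\tilde f}$ is a $\hatT$-homomorphism $(\hatt n^\oplus,\hatmu_{n^\oplus})\to(A,\alpha^+)$ with $\alpha^+_{\tilde f}\circ\hateta_{n^\oplus} = \tilde f$, hence its restriction along the inclusion of generators $n\hookrightarrow n^\oplus$ equals $f$; by uniqueness of the extension, $\alpha^+_{\tilde f} = f^\oplus$. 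Therefore, under the bijection above, $t$ corresponds to the compatible family $(f^\oplus(t))_{((A,\alpha),f)}$, that is, to the natural transformation whose $(A,\alpha)$-component sends $f$ to $f^\oplus(t) = \varrho_t(f)$ (\Cref{nota:n-ary-implicit-operation}); this is exactly $\varrho_t$, so $t\mapsto\varrho_t$ is a bijection. For a general finitely-sorted signature the argument is identical, with $n$ replaced by a finite $\S$-sorted set and $W^n$ by the corresponding product of sorted forgetful functors. I expect the main obstacle to be precisely this last bookkeeping step --- matching the codensity-monad limit projections with the homomorphisms $f^\oplus$ by threading together the free--forgetful transposes for $\D$, $\Pro\Df$ and $(\Pro\Df)^{\hatT}$ --- together with the point, easily missed, that one should route the limit through the cofiltered description of \Cref{lem:hattX-is-limit-of-finite-quotients} in order to know the underlying-set functor preserves it.
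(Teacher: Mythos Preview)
Your proposal is correct and follows essentially the same strategy as the paper: both identify $\hatt n^\oplus$ and $\mathrm{Nat}(W^n,W)$ with the same limit $\lim(n^\oplus/K \to \Df^\MT \xrightarrow{W} \Set)$ by observing that objects of $n^\oplus/K$ are precisely pairs of a finite $\MT$-algebra together with an $n$-tuple of its elements, and then check that the bijection is $t\mapsto\varrho_t$. The only noteworthy difference is that you route the limit-preservation argument through the cofiltered description of \Cref{lem:hattX-is-limit-of-finite-quotients}, whereas the paper simply observes that $\Pro\Df$ is closed under limits in $\Stone(\SigmaAlg)$ and the forgetful functor of the latter preserves limits, so the forgetful $\overline U\colon \Pro\Df\to\Set$ preserves \emph{all} limits --- this makes your detour via cofiltered limits unnecessary, though harmless.
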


\begin{proof}
  Recall from \Cref{coro:pro-Df} that $\Pro\Df$ is a full subcategory of
  $\Stone(\SigmaAlg)$ closed under limits. The forgetful functor of the
  latter preserves limits, hence, so does the forgetful functor $\ol U\colon \Pro\Df \to
  \Set$. Recall further from \Cref{cons:profinite-monad} that 
  \[
    \hatt n^\oplus = \lim Q_{n^\oplus}
  \]
  where $Q_{n^\oplus}\colon n^\oplus/K\to \Pro{\Df}$ is the diagram of all morphisms
  \[
    a \colon n ^\oplus \to K(A, \alpha) = A
    \quad\text{of $\Pro\Df$}.
  \]
  Thus, profinite terms $t \in \hatt n^\oplus$ are elements of the limit of 
  \[
    \overline{U} \o Q_{n^\oplus} \colon n^\oplus/K \to \Set
  \]
  By the well-known description of limits in $\Set$, to give $t$ means
  to give a compatible collection of elements of $UA$, i.e.~for every
  $n^\oplus \xto{a} K(A, \alpha)$ one gives $t_a \in UA$ such that for
  every morphism of $n^\oplus/{K}$:
  \[
    \xymatrix@C-2em{
      & n^\oplus \ar[rd]^{b} \ar[ld]_{a} \\
    K(A, \alpha) \ar[rr]_{Kh} & & K(B, \beta)
    }
  \]
  we have $Uh(t_a) = t_b$. 
  
  Now observe that an object of $n^\oplus/K$ is precisely a finite
  $\MT$-algebra $(A, \alpha)$ together with an $n$-tuple $a_0$ of
  elements of $UA$. Thus, the given collection $a \mapsto t_a$ is
  precisely an $n$-ary operation on $UA$ for every finite algebra
  $(A, \alpha)$. Moreover, the compatibility means precisely that
  every homomorphism $h\colon (A, \alpha) \to (B, \beta)$ of finite
  $\MT$-algebras preserves that operation. Thus, $\hatt n^\oplus$
  consists of precisely the $n$-ary implicit operations. Finally, it
  is easy to see that the resulting operation is $\varrho_t$ of
  \Cref{nota:n-ary-implicit-operation} for every
  $t \in \hatt n^\oplus$.
\end{proof}

\begin{remark}
  \begin{enumerate}
    \item For $\mc{S}$-sorted signatures this is completely analogous. Let $W^s \colon \Df^\MT \to \Set$ assign to every finite
      $\MT$-algebra $(A, \alpha)$ the component of sort $s$ of the
      underlying $\mc{S}$-sorted set $UA$.  An
      \emph{implicit operation} of arity 
      \[
        \varrho\colon s_1,\ldots, s_n \to s
      \]
      is a natural transformation
      \[
        \varrho\colon W^{s_1} \times \dots \times W^{s_n} \to W^s
      \]
      Thus $\varrho$ assigns to
      every finite $\MT$-algebra $(A, \alpha)$ an operation
      \[
        \varrho_{(A, a)}\colon {UA}^{s_1} \times \dots UA^{s_n}\to UA^s
      \]
      that all homomorphisms in $\Df^\MT$ preserve. 
    
    \item\label{itm:ssortedimplicitop} Recall that we identify every
      natural number $n$ with the set $\{0, \ldots, n-1\}$. For every
      arity $s_1, \ldots, s_n \to s$ we choose a finite
      $\mc{S}$-sorted set $X$ such that for every sort $t$ we have
      \[
         X^t = \set{i \in \{1, \ldots, n\}}{t=s_i}.
      \]
      Then for every finite $\MT$-algebra $(A, \alpha)$, to give an
      $n$-tuple $a_i \in A_{s_i}$ is the same as to give $\mc{S}$-sorted
      function $f\colon X \to UA$.

    \item \Cref{nota:n-ary-implicit-operation} has the following
      generalization: given a profinite term $t \in \hatt X^\oplus$
      over $X$ of sort $s$, we define an implicit operation
      $\varrho_t \colon s_1, \ldots, s_n\to s$ by its components at
      all finite $\MT$-algebras $(A, \alpha)$ as follows:
      \[
        \varrho_t(f) = f^\oplus (t) \quad\text{for all $f\colon X \to UA$}.
      \]
      This yields a bijection between $\hatt X^\oplus$ and implicit
      operations of arity $s_1, \ldots, s_n \to s$ for $X$
      in~\ref{itm:ssortedimplicitop}. The proof is completely
      analogous to that of
      \Cref{lem:implicit-operation=profinite-terms}.
  \end{enumerate}
\end{remark}

\begin{defn}
  Let $\varrho$ and $\varrho'$ be implicit operations of the same
  arity. A finite algebra $(A,\alpha)$ \emph{satisfies the equation}
  $\varrho = \varrho'$ if their components $\varrho_{(A,\alpha)}$ and
  $\varrho_{(A,\alpha)}'$ coincide.
\end{defn}

The above formula $\varrho_t(f) = f^\oplus(t)$ shows that given
profinite terms $t, t' \in \hatt X^\oplus$ of the same sort, a finite
algebra satisfies the profinite equation $t = t'$ if and only if it
satisfies the implicit equation $\varrho_t =
\varrho_{t'}$. Consequently:

\begin{cor}
  Under the hypotheses of
  \Cref{thm:reiterman-for-monads-on-algebras}, a collection of
  finite $\MT$-algebras is a pseudovariety iff it can be presented by
  equations between implicit operations.
\end{cor}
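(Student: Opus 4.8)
The plan is to derive the corollary directly from \Cref{thm:reiterman-for-monads-on-algebras} by transporting the statement along the bijection between profinite terms and implicit operations; the only real work is to check that the two notions of presentation—``by equations between profinite terms'' and ``by equations between implicit operations''—cut out exactly the same collections of finite $\MT$-algebras. First I would invoke the $\mc{S}$-sorted correspondence recorded just above the corollary: for each arity $s_1,\dots,s_n\to s$ and the associated finite $\mc{S}$-sorted variable set $X$ with $X^t=\{i\mid t=s_i\}$, the map $t\mapsto\varrho_t$ is a bijection from the set $\hatt X^\oplus$ of profinite terms of sort $s$ onto the implicit operations of that arity, where $\varrho_t$ is given componentwise by $\varrho_t(f)=f^\oplus(t)$ for every interpretation $f\colon X\to UA$ in a finite $\MT$-algebra $(A,\alpha)$. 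This is precisely \Cref{lem:implicit-operation=profinite-terms} together with its $\mc{S}$-sorted generalization stated in the preceding remark.

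Next I would observe that satisfaction is preserved under this bijection. Given profinite terms $t,t'\in\hatt X^\oplus$ of the same sort, the defining formula $\varrho_t(f)=f^\oplus(t)$ shows that a finite $\MT$-algebra $(A,\alpha)$ satisfies the profinite equation $t=t'$ in the sense of \Cref{D:eqterm} (i.e.\ $f^\oplus(t)=f^\oplus(t')$ for every interpretation $f$) if and only if the components $(\varrho_t)_{(A,\alpha)}$ and $(\varrho_{t'})_{(A,\alpha)}$ coincide, i.e.\ if and only if it satisfies the implicit equation $\varrho_t=\varrho_{t'}$. Hence any family $\{t_i=t_i'\}_{i\in I}$ of profinite equations and the corresponding family $\{\varrho_{t_i}=\varrho_{t_i'}\}_{i\in I}$ of implicit equations are satisfied by exactly the same finite $\MT$-algebras, and by surjectivity of $t\mapsto\varrho_t$ every implicit equation of the given arity arises in this way.

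Combining these two points, a collection of finite $\MT$-algebras is presentable by equations between profinite terms if and only if it is presentable by equations between implicit operations. Since \Cref{thm:reiterman-for-monads-on-algebras} identifies the former property with being a pseudovariety, the claim follows. I do not expect a genuine obstacle here: the argument is purely a matter of reading off the already-established bijection and the formula $\varrho_t(f)=f^\oplus(t)$, the single point requiring a little care being the uniform choice of variable set $X$ for each arity, which has already been fixed in the remark preceding the corollary.
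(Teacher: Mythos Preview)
Your proposal is correct and matches the paper's own argument essentially verbatim: the paper simply notes that the formula $\varrho_t(f)=f^\oplus(t)$ makes a finite algebra satisfy $t=t'$ iff it satisfies $\varrho_t=\varrho_{t'}$, and then states the corollary as an immediate consequence of \Cref{thm:reiterman-for-monads-on-algebras}. There is no additional content in the paper's proof beyond what you have written.
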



\section{Profinite Inequations}\label{sec:profinite-ineq}

Whereas for varieties $\D$ of algebras the equation morphisms in the
Reiterman \Cref{thm:reiterman-for-monads} can be substituted by
equations $t = t'$ between profinite terms, this does not hold for
varieties $\D$ of ordered algebras (i.e.~classes of ordered
$\Sigma$-algebras specified by inequations $t \leq t'$ between
terms). The problem is that $\Pos$ does not have a dense set of
objects projective w.r.t.\ strong epimorphisms. Indeed, only discrete
posets are projective w.r.t.\ the following regular epimorphism:

\begin{center}
  \begin{tikzpicture}
    \draw[dotted] (-2.5,-2.5) rectangle (-1.5,1.5);
    \draw[dotted] (0.5,-2.5) rectangle (1.5,1.5);

    \path
       (-2,-2) node(a) [circle,fill,inner xsep=0cm, inner ysep=0.5mm] {}
       (-2,-1) node(b)  [circle,fill,inner xsep=0cm, inner ysep=0.5mm] {}
       (-2, 0) node(c)  [circle,fill,inner xsep=0cm, inner ysep=0.5mm] {}
       (-2,1) node(d)  [circle,fill,inner xsep=0cm, inner ysep=0.5mm] {}
       (1, -0.5) node(e)  [circle,fill,inner xsep=0cm, inner ysep=0.5mm] {}
       (1, 0.5) node(f)  [circle,fill,inner xsep=0cm, inner ysep=0.5mm] {}
       (1, -1.5) node(g)  [circle,fill,inner xsep=0cm, inner ysep=0.5mm] {};

    \draw (a) -- (b);
    \draw (c) -- (d);
    \draw (f) -- (g);
	\draw[dashed,->] (d) -- (f);
    \draw[dashed,->] (b) -- (e);
    \draw[dashed,->] (a) -- (g);
    \draw[dashed,->] (c) -- (e);
  \end{tikzpicture}
\end{center}

We are going to show that for $\D = \Pos$ (and more generally varieties $\D$ of
ordered algebras) a change of the factorization system from
$(\mathsf{StrongEpi}, \mathsf{Mono})$ to (surjective, order-reflecting)
enables us to apply the results of Section \ref{sec:profinite-monad} to the proof that
pseudovarieties of finite ordered $\MT$-algebras are presentable by inequations
between profinite terms. This generalizes results of Pin and
Weil~\cite{PinWeil1996} who proved a version of Reiterman's theorem (without
monads) for ordered algebras, in fact, for general first-order structures. We begin with monads on $\Pos$, and then
show how this yields results for monads on varieties $\D$ of ordered algebras.

\begin{notation}
  Given an $\mc{S}$-sorted signature $\Sigma$ of operation symbols, let
  $\Sigma_{\leq}$ denote the $\mc{S}$-sorted first-order signature with 
    operation symbols $\Sigma$
  and
    a binary relation symbol $\leq_s$  for every $s \in \mc{S}$.
  Moreover, let
  \[
    \SigmaOAlg
  \]
  be the full subcategory of $\Sigma_{\leq}\text{-}\mathbf{Str}$ for
  which $\leq_s$ is interpreted as a partial order on the sort $s$ for
  every $s \in \mc{S}$, and moreover every $\Sigma$-operation is
  monotone w.r.t.~these orders. Thus, objects are ordered
  $\Sigma$-algebras, morphisms are monotone
  $\Sigma$-homomorphisms. Recall from \Cref{remark:factorization}
  our factorization system with
  \begin{align*}
    \E &=  \text{morphisms surjective in all sorts, and}  \\
    \M &=  \text{morphisms order-reflecting in all sorts.}
  \end{align*}
  Thus a $\Sigma$-homomorphisms $m$ lies in $\M$ iff for all $x, y$ in the same sort
  of its domain we have $x \leq y$ iff $m(x) \leq m(y)$. The notion of a subcategory $\D$ of $\SigmaOAlg$ being closed under factorizations is analogous to \Cref{def:closed_under_factorizions}.
\end{notation}

\begin{assumption}\label{assum:reflective-subcategory-ordered}
  Throughout this section, $\D$ denotes a full reflective subcategory
  of $\SigmaOAlg$ closed under factorizations. Moreover, $\Df$ is the
  full subcategory of $\D$ given by all algebras which are finite in
  every sort.
\end{assumption}
  Thus, every variety of ordered algebras (presented by inequations $t\leq t'$ betweens terms) can serve as $\D$, as well as every quasivariety (presented by implications between inequations).

\begin{remark}
  \begin{enumerate}
  \item \sloppypar Recall from \Cref{coro:pro-Df} that $\Pro\Df$ is a full
    subcategory of $\Stone(\SigmaOAlg)$, the category of ordered
    Stone $\Sigma$-algebras.

  \item The factorization system on $\D$ inherited from $\SigmaOAlg$
    is profinite, see \Cref{ex:profinite-factorization}. Moreover,
    the induced factorization system $\widehat{\E}$ and $\hatM$ of
    $\Pro{\Df}$ is given by the surjective and order-reflecting
    morphisms of $\Pro\Df$, respectively (see
    \Cref{prop:factorsation-system-subcategory}).
    
  \end{enumerate}
\end{remark}

\begin{notation}
  \begin{enumerate}
    \item We again denote by $(-)^@\colon \SigmaOAlg \to \D$ the reflector. 

    \item For every finite $\mc{S}$-sorted set $X$ we have the free algebra
      $F_\Sigma X$ (discretely ordered). 

    \item The free object of $\Pro\Df$ on a sorted set $X$ is again
      denoted by $X^\oplus$ (in lieu of $\widehat{(F_\Sigma
        X)^@}$). For every finite $\MT$-algebra $(A, \alpha)$, given an
      interpretation $f$ of $X$ in $(A,\alpha)$, we obtain a
      homomorphism
      \[
        f^\oplus \colon (\hatt X^\oplus, \hatmu_{X^\oplus}) \to (A, \alpha)
      \]
  \end{enumerate}
\end{notation}

\begin{defn}
  By a \emph{profinite term} on a finite $\mc{S}$-sorted set $X$ of variables is
  meant an element of $\hatt X^\oplus$. 
  
  Given profinite terms $t_1, t_2$ of the same sort $s$, a finite $\MT$-algebra
  $(A, \alpha)$ is said to \emph{satisfy the inequation}
  \[
    t_1 \leq t_2 
  \]
  provided that for every interpretation $f$ of $X$ we have 
  $f^\oplus(t_1) \leq f^\oplus (t_2)$. 
\end{defn}
\begin{thm}
  Let $\D$ be a full reflective subcategory of $\SigmaOAlg$ closed
  under factorizations, and let $\MT$ be a monad on $\D$ preserving
  sortwise surjective morphisms. Then a collection of finite
  $\MT$-algebras is a pseudovariety iff it can be presented by
  inequations between profinite terms.
\end{thm}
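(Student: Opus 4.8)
The plan is to reproduce the proof of \Cref{thm:reiterman-for-monads-on-algebras} almost verbatim, with the factorization system $(\mathsf{StrongEpi},\mathsf{Mono})$ replaced throughout by the system $(\E,\M)$ of \Cref{assum:reflective-subcategory-ordered}, where $\E$ consists of the sortwise surjective morphisms and $\M$ of the sortwise order-reflecting ones, and with equations $t=t'$ between profinite terms replaced by inequations $t\leq t'$. First I would check that $(\D,\Df,\MT)$ with this factorization system satisfies all hypotheses of \Cref{thm:reiterman-for-monads} and \Cref{R:r-for-monads}. The system $(\E,\M)$ is profinite and induces on $\Pro\Df\seq\Stone(\SigmaOAlg)$ the factorization system whose left class is the sortwise surjective continuous homomorphisms --- this is \Cref{ex:profinite-factorization} together with \Cref{prop:factorsation-system-subcategory}, as already noted in the Remark preceding the statement --- and $\MT$ preserves $\E$ by hypothesis. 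Taking $\Var$ to be the collection of all discretely ordered free algebras $(F_\Sigma X)^@$ on finite $\mc{S}$-sorted sets $X$, one argues exactly as in the first part of the proof of \Cref{thm:reiterman-for-monads-on-algebras}: each such object is $\E$-projective because an $\E$-morphism of $\D$ is sortwise surjective, hence sortwise split in $\Set^{\mc{S}}$, and every $D\in\Df$ is an $\E$-quotient of $(F_\Sigma X)^@$ for $X$ the underlying sorted set of $D$, using \cite[Prop.~14.11]{AdamekEA09}. Hence \Cref{thm:reiterman-for-monads} and \Cref{R:r-for-monads} apply, and pseudovarieties in $\Df^\MT$ are exactly the classes presentable by equation morphisms $e\colon(\hatt X^\oplus,\hatmu_{X^\oplus})\epito(A,\alpha)$ with $X$ a finite $\mc{S}$-sorted set and $e\in\widehat\E$.

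It then remains to show that presentation by such equation morphisms is equivalent to presentation by inequations between profinite terms. To an equation morphism $e\colon(\hatt X^\oplus,\hatmu_{X^\oplus})\epito(A,\alpha)$ I associate the set of all inequations $t\leq t'$ with $t,t'\in\hatt X^\oplus$ of the same sort and $e(t)\leq_A e(t')$, and claim that a finite $\MT$-algebra $(B,\beta)$ is injective with respect to $e$ iff it satisfies all these inequations. For the forward direction, given such a $(B,\beta)$ and an interpretation $f$ of $X$ in it, the homomorphism $f^\oplus$ factorizes as $f^\oplus=h\o e$ through a $\hatT$-homomorphism $h\colon(A,\alpha)\to(B,\beta)$; since $h$ is monotone, $e(t)\leq_A e(t')$ forces $f^\oplus(t)\leq f^\oplus(t')$. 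For the converse, suppose $(B,\beta)$ satisfies those inequations. Any $\hatT$-homomorphism $h\colon(\hatt X^\oplus,\hatmu_{X^\oplus})\to(B,\beta)$ equals $f^\oplus$ for $f$ the domain-restriction of $Uh$, and then $e(t)\leq_A e(t')$ implies $h(t)\leq_B h(t')$; in particular $e(t)=e(t')$ implies $h(t)=h(t')$ (using antisymmetry in $B$), so $Uh$ factorizes in $\Set^{\mc{S}}$ as $Uh=k\o Ue$.

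The heart of the argument --- and the only place where the ordered case genuinely departs from \Cref{thm:reiterman-for-monads-on-algebras}, so I expect it to be the main obstacle --- is to verify that $k\colon U(A,\alpha)\to U(B,\beta)$ underlies a morphism of $(\Pro\Df)^{\hatT}$, i.e.\ a monotone $\Sigma$-homomorphism compatible with the $\hatT$-actions. Preservation of the $\Sigma$-operations follows from $Ue$ being epic in $\Set^{\mc{S}}$, just as in \emph{loc.\ cit.} Monotonicity is the new ingredient and crucially uses that $e$ is surjective in every sort: given $x\leq_A y$ of the same sort $s$, choose $t,t'\in\hatt X^\oplus$ of sort $s$ with $e(t)=x$ and $e(t')=y$; then $e(t)\leq_A e(t')$, hence $k(x)=h(t)\leq_B h(t')=k(y)$. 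Compatibility with the $\hatT$-actions follows, exactly as in \emph{loc.\ cit.}, by applying that $\hatt e$ is epic --- $\hatt$ preserves $\widehat\E$ by \Cref{coro:hatT-preserves-quotients}, and $\widehat\E$-morphisms are epic by \Cref{prop:factorization-of-proD} --- to the square built from $e$, $h$ and the identity $k\o e=h$. Thus $h=k\o e$ and $(B,\beta)$ is injective with respect to $e$, completing the equivalence. Finally, the \emph{if} direction of the theorem is verified directly, as in the proof of \Cref{thm:reiterman-for-monads-on-algebras}: closure under finite products uses that underlying sorted sets of products are products, closure under quotients uses that $\E$-morphisms split sortwise in $\Set^{\mc{S}}$, and closure under subalgebras now invokes that members of $\M$ reflect the order (not merely that monomorphisms are injective). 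Assembling these facts yields the theorem.
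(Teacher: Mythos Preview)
Your proposal is correct and follows essentially the same approach as the paper's proof: both reduce to \Cref{thm:reiterman-for-monads} via \Cref{R:r-for-monads} with $\Var=\{(F_\Sigma X)^@\}$, then translate equation morphisms into sets of inequations $t\leq t'$ with $e(t)\leq_A e(t')$, the only new work beyond \Cref{thm:reiterman-for-monads-on-algebras} being the monotonicity of the induced map $k$ (which you handle exactly as the paper does, via surjectivity of $e$) and the use of order-reflection for closure under subalgebras. Your explicit appeal to antisymmetry in $B$ to obtain the set-level factorization $k$ is a detail the paper leaves implicit.
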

\begin{proof}
  In complete analogy to the proof of
  \Cref{thm:reiterman-for-monads-on-algebras}, we put
  \[
    \Var = \set{(F_\Sigma X)^@}{X \text{ a finite $\mc{S}$-sorted set}}. 
  \]
  and observe that \Cref{thm:reiterman-for-monads} and
  \Cref{R:r-for-monads} can be applied.
  \begin{enumerate}
  \item If $\V$ is a collection of finite $\MT$-algebras presented by
    inequations $t_i \leq t_i'$, we need to verify that $\V$ is a
    pseudovariety. This is analogous to the proof of
    \Cref{thm:reiterman-for-monads-on-algebras}; in
    part~\ref{itm::thm:reiterman-for-monads-on-algebras-2} we use that
    $m$ reflects the relation symbols $\leq_s$, hence from
    $m \o f^\oplus(t_i) \leq_s m \o f^\oplus (t_i')$ we derive
    $f^\oplus (t_i) \leq_s f^\oplus(t_i')$.
        
  \item Given an equation morphism
    $e\colon (\hatt X^\oplus, \hatmu_{X^\oplus}) \epito (A, \alpha)$,
    consider all inequations $t \leq_s t'$ where $t$ and $t'$ are
    profinite terms of sort $s$ with $Ue(t) \leq Ue(t')$ in $A$. We
    verify that a finite $\hatT$-algebra $(B, \beta)$ satisfies those
    inequations iff it satisfies $e$. This is again completely
    analogous to the corresponding argument in the proof of
    \Cref{thm:reiterman-for-monads-on-algebras}; just at the end we
    need to verify, additionally, that
    \[  
      \text{$x \leq_s x'$ in $B$}
      \qquad\text{implies}\qquad
      \text{$h(x) \leq_s h(x')$ in $A$}. 
    \]
Denote by $U\colon (\Pro{\Df})^\hatT\to \Pos^\S$ the forgetful functor.
    Since $Ue$ has surjective components, we have terms $t, t'$ in
    $\hatt X^\oplus$ of sort $s$ with $x = Ue(t)$ and $x' = Ue(t')$,
    thus $t \leq t'$ is one of the above inequations. The algebra
    $(B, \beta)$ satisfies $t \leq t'$ and (like in
    \Cref{thm:reiterman-for-monads-on-algebras}) we get
    $h = f^\oplus$, hence $Uh(t) \leq Uh(t')$. From $Uh = k \o Ue$,
    this yields $k(x) \leq_s k(x')$.\qedhere
  \end{enumerate}
\end{proof}
\begin{remark}
  In particular, if $\D$ is a variety of ordered one-sorted $\Sigma$-algebras
  and $\MT$ a monad preserving surjective morphisms, pseudovarieties of
  $\MT$-algebras can be described by inequations between profinite terms. This
  generalizes the result of Pin and Weil~\cite{PinWeil1996}. In fact, these authors consider pseudovarieties of general first-order structures, which can be treated within our categorical framework completely analogously to the case of ordered algebras.
\end{remark}


\bibliographystyle{ACM-Reference-Format}
\bibliography{reference}

\clearpage
\appendix
\section{Ind- and Pro-Completions}

The aim of this appendix is to characterize, for an arbitrary small category $\Cat$, the free completion $\Pro{\Cat}$ under cofiltered limits and its dual concept, the free completion
$\Ind{\Cat}$
under filtered colimits (see \Cref{not:proc}). Let us first recall the construction of the latter:
\begin{remark}\label{rem:indcomp}
For any small category $\Cat$, the ind-completion is given up to equivalence by the full subcategory $\L$ of the presheaf category $[\Cat^\op, \Set]$ on filtered colimits of
    representables, and the Yoneda embedding
\[ E\colon \Cat\monoto \L,\quad C \mapsto \Cat(-,
    C).\]
We usually leave the embedding $E$ implicit and view $\Cat$ as a full subcategory of $\L$.
\end{remark} 
Dually to \Cref{rem:cofiltered}, an object $A$ of a category $\Cat$ is called \emph{finitely presentable} if the functor $\A(A,\dash)\colon \Cat\to\Set$ is finitary, i.e.~preserves filtered colimits.

\begin{defn}
Let $L$ be an object of a category $\L$. Its \emph{canonical diagram} w.r.t.~a full subcategory $\Cat$ of $\L$ is the diagram $D^L$ of all morphisms from objects of $\Cat$ to $L$:
\[ D^L\colon \Cat/L \to \L,\quad (C\xto{c} L)\mapsto C.\]
\end{defn}

\begin{lem}\label{lem:filtered-canonical-diagram}
  Let $\Cat$ be a full subcategory of $\L$ such that each object $C\in \Cat$ is finitely presentable in $\L$. An object $L$ of $\mathscr{L}$ is a colimit of some filtered diagram
  in $\Cat$ if and only if its canonical diagram is filtered and the
  canonical cocone $(C \xto{c} L)_{c\in \Cat / L}$ is a
  colimit.
\end{lem}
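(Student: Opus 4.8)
The plan is to treat the two implications separately. The ``if'' direction is immediate: the canonical diagram $D^L\colon \Cat/L\to\L$ takes values in $\Cat$, since $D^L(C\xto{c}L)=C\in\Cat$, so if $D^L$ is filtered and its canonical cocone is a colimit then $L$ is by definition a filtered colimit of objects of $\Cat$.

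For the ``only if'' direction, suppose $L=\colim D$ for a filtered diagram $D\colon\I\to\Cat$ with colimit cocone $d_i\colon D_i\to L$ ($i\in\I$). The one tool used throughout is finite presentability of the objects of $\Cat$ in $\L$, in the form dual to \Cref{rem:cofiltered}: every morphism $c\colon C\to L$ with $C\in\Cat$ factorizes as $c=d_i\o c'$ for some $i\in\I$ and $c'\colon C\to D_i$, and such a factorization is essentially unique (any two are merged by a connecting morphism of $D$). First I would verify that $\Cat/L$ is filtered: it is nonempty because $\I$ is; two objects $c_k\colon C_k\to L$ ($k=1,2$) are joined by factoring each through a $d_{i_k}$ and pushing both $i_k$ into a common object of $\I$; and a parallel pair $f,g\colon c_1\to c_2$ in $\Cat/L$ (so $c_2\o f=c_1=c_2\o g$) is coequalized by applying essential uniqueness to the two maps $c_2'\o f, c_2'\o g\colon C_1\to D_i$ obtained from a factorization $c_2=d_i\o c_2'$, which yields a connecting morphism $Du\colon D_i\to D_j$ with $Du\o c_2'\o f=Du\o c_2'\o g$, hence a morphism $Du\o c_2'\colon c_2\to d_j$ of $\Cat/L$ coequalizing $f$ and $g$.

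Next I would show that the functor $P\colon\I\to\Cat/L$ given by $i\mapsto(D_i\xto{d_i}L)$ and $u\mapsto Du$ is final in the sense of \Cref{ex:profinite}; note $D^L\o P=D$. Condition (a) of finality is precisely the factorization property above. For condition (b), given $f_k\colon c\to d_{i_k}$ in $\Cat/L$ ($k=1,2$), choose $u_k\colon i_k\to j$ in $\I$ with a common codomain; then $d_j\o Du_1\o f_1=c=d_j\o Du_2\o f_2$, so essential uniqueness provides $w\colon j\to j'$ with $Dw\o Du_1\o f_1=Dw\o Du_2\o f_2$, i.e.\ $P(w u_1)\o f_1=P(w u_2)\o f_2$. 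Finally, since $P$ is final, $D^L\o P=D$, and $(d_i)_{i\in\I}$ is a colimit cocone for $D$, the canonical cocone $(c)_{c\in\Cat/L}$ of $D^L$ is a colimit cocone: any cocone $(\gamma_c)$ over $D^L$ with vertex $Z$ restricts along $P$ to a cocone over $D$, inducing a unique $\bar\gamma\colon L\to Z$ with $\bar\gamma\o d_i=\gamma_{d_i}$; factoring an arbitrary $c=d_i\o c'$ and using the cocone equation for $\gamma$ along $c'\colon c\to d_i$ gives $\bar\gamma\o c=\gamma_c$, and uniqueness of $\bar\gamma$ is already forced by the equations $\bar\gamma\o d_i=\gamma_{d_i}$ alone.

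The main obstacle is not a deep one but a matter of care: both the filteredness argument and condition (b) of finality genuinely require the \emph{essential uniqueness} half of finite presentability, not merely the existence of factorizations; and one must keep straight that morphisms of $\Cat/L$ are morphisms of $\Cat$ over $L$, so that the cocone condition for $D^L$ reads correctly when composed with slice morphisms.
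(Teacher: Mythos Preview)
Your proof is correct and takes essentially the same approach as the paper's (very brief) sketch: the paper simply observes that the given filtered diagram is a final subdiagram of the canonical diagram, whence their colimits coincide. You spell out precisely this finality argument via the functor $P\colon\I\to\Cat/L$, and in addition give an explicit verification that $\Cat/L$ is filtered, which the paper's sketch leaves implicit.
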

\begin{proof}[Proof sketch]
  The \emph{if} part is trivial. Conversely, if $L$ is a colimit of
  some filtered diagram, then we can view it as a {final}
  subdiagram of its canonical diagram.  Therefore, their colimits
  coincide.
\end{proof}

\begin{thm}\label{thm:indcomp}
Let $\Cat$ be a small category. A category $\L$ containing $\Cat$ as a full subcategory is an ind-completion of $\Cat$ if and only if the following conditions hold:
\begin{enumerate}
\item\label{itm:lem:char-of-indcompltion-1} $\L$ has filtered colimits,
\item\label{itm:lem:char-of-indcompltion-2} every object of $\L$ is the colimit of a filtered diagram in $\Cat$, and
\item\label{itm:lem:char-of-indcompltion-3} every object of $\Cat$ is finitely presentable in $\L$.
\end{enumerate}
\end{thm}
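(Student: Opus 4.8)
The plan is to establish the two implications separately: the forward one reduces at once to the concrete model of \Cref{rem:indcomp}, while the reverse one amounts to verifying the universal property of the completion (the dual of \Cref{not:proc}).

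\emph{Necessity.} Since an ind-completion is unique up to equivalence commuting with the embedding, by \Cref{rem:indcomp} I may transport the three properties along such an equivalence and assume that $\L$ is the full subcategory of $[\Cat^\op,\Set]$ on filtered colimits of representables, with $E$ the Yoneda embedding. Then \ref{itm:lem:char-of-indcompltion-2} holds by the very definition of $\L$; for \ref{itm:lem:char-of-indcompltion-1}, a filtered colimit of filtered colimits of representables is again one, so $\L$ is closed under filtered colimits in $[\Cat^\op,\Set]$ and inherits them; and for \ref{itm:lem:char-of-indcompltion-3}, the Yoneda lemma gives $\L(EC,F)\cong FC$ naturally in $F$, and $F\mapsto FC$ preserves filtered colimits because these are computed pointwise in $[\Cat^\op,\Set]$ and inherited by $\L$, so each $EC$ is finitely presentable in $\L$.

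\emph{Sufficiency, construction of the extension.} Assume \ref{itm:lem:char-of-indcompltion-1}--\ref{itm:lem:char-of-indcompltion-3}, and let $F\colon\Cat\to\K$ be a functor into a category with filtered colimits. For $L\in\L$ I would put
\[
  \overline F L\defeq\colim\bigl(\Cat/L\xto{Q_L}\Cat\xto{F}\K\bigr),
\]
where $Q_L$ is the projection $(C\xto{c}L)\mapsto C$ of the canonical diagram of $L$; this colimit exists because $\Cat/L$ is small and, by \Cref{lem:filtered-canonical-diagram} (using \ref{itm:lem:char-of-indcompltion-2} and \ref{itm:lem:char-of-indcompltion-3}), filtered. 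Postcomposition with $f\colon L\to L'$ yields a functor $\Cat/L\to\Cat/L'$ over $\Cat$, inducing $\overline F f$, so $\overline F$ is a functor; and since $\id_C$ is terminal in $\Cat/C$ for $C\in\Cat$, we get $\overline F C=FC$, i.e.\ $\overline F E=F$.

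\emph{Sufficiency, finitariness (the main obstacle).} The crux is that $\overline F$ preserves filtered colimits. I would first show that for a filtered diagram $D\colon\mathcal I\to\Cat$ with colimit cocone $d_i\colon D_i\to L$ in $\L$ one has $\overline F L=\colim(F\o D)$; for this it suffices that $P\colon\mathcal I\to\Cat/L$, $i\mapsto(D_i\xto{d_i}L)$, is a final functor, since then $\overline F L=\colim(F\o Q_L)=\colim(F\o Q_L\o P)=\colim(F\o D)$. Finality of $P$ is exactly where condition \ref{itm:lem:char-of-indcompltion-3} enters: given $(C\xto{c}L)\in\Cat/L$, finite presentability of $C$ relative to the filtered colimit $L=\colim D$ provides a factorisation $c=d_i\o g$, so the comma category $(C,c)/P$ is nonempty, and the essential-uniqueness clause together with filteredness of $\mathcal I$ makes it connected. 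For an arbitrary filtered colimit $L=\colim_{i\in\mathcal I}L_i$ in $\L$, I would choose (using \ref{itm:lem:char-of-indcompltion-2}) filtered diagrams $C^i_{(-)}\colon\mathcal J_i\to\Cat$ with $L_i=\colim_{j\in\mathcal J_i}C^i_j$, form the Grothendieck construction $\mathcal G$ of $i\mapsto\mathcal J_i$ (filtered, since $\mathcal I$ and all $\mathcal J_i$ are), and invoke the Fubini theorem for colimits to write $L=\colim_{\mathcal G}C^i_j$, a filtered colimit of a diagram in $\Cat$; applying the first step twice then gives
\[
  \overline F L=\colim_{\mathcal G}F C^i_j=\colim_{i\in\mathcal I}\colim_{j\in\mathcal J_i}F C^i_j=\colim_{i\in\mathcal I}\overline F L_i .
\]
I expect the finality argument and this Grothendieck-construction bookkeeping to demand the most care; everything else is formal.

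\emph{Sufficiency, essential uniqueness.} If $G\colon\L\to\K$ is finitary with $GE=F$, then $G$ preserves the filtered colimit $L=\colim D^L$ of the canonical diagram and agrees with $F$ on $\Cat$, so $GL\cong\colim(F\o Q_L)=\overline F L$ naturally in $L$, the isomorphism restricting to the identity on $\Cat$ (where $\Cat/C$ has a terminal object); and it is the only such isomorphism, since a natural transformation out of $\overline F$ is determined by its components on $\Cat$, whose filtered colimits exhaust $\L$ by \ref{itm:lem:char-of-indcompltion-2}. This verifies the (dual of the) universal property in \Cref{not:proc}, completing the proof.
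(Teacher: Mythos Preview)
Your proof is correct, and the necessity direction, the construction of $\overline F$, and the essential-uniqueness argument match the paper's closely. The genuine difference is in how you prove that $\overline F$ is finitary. The paper recognises $\overline F$ as the pointwise left Kan extension $\Lan_E F$, invokes the characterisation $\KCat(\overline F L,K)\cong[\Cat^\op,\Set]\bigl(\L(E-,L),\KCat(F-,K)\bigr)$, and then uses condition~\ref{itm:lem:char-of-indcompltion-3} to commute $\L(E-,-)$ with a given filtered colimit, concluding by Yoneda. Your route via a finality argument for diagrams landing in $\Cat$, followed by a Grothendieck--Fubini reduction for general filtered diagrams in $\L$, is more hands-on; it makes the role of finite presentability very explicit (it is exactly what gives finality of $P\colon\mathcal I\to\Cat/L$), whereas the paper's argument hides this in the single isomorphism $\L(E-,\colim_i A_i)\cong\colim_i\L(E-,A_i)$.

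One point to tighten: forming ``the Grothendieck construction of $i\mapsto\mathcal J_i$'' requires this assignment to be a (pseudo)functor $\mathcal I\to\SmallCat$, which is not automatic for arbitrarily chosen presentations $L_i=\colim_{\mathcal J_i}C^i_j$. The clean fix is to take $\mathcal J_i=\Cat/L_i$, i.e.\ the canonical diagrams; these are filtered by \Cref{lem:filtered-canonical-diagram} and are strictly functorial via postcomposition with the connecting maps $L_i\to L_{i'}$, after which your Fubini computation goes through verbatim. The same remark applies to your necessity argument that ``a filtered colimit of filtered colimits of representables is again one'': the slick justification is that a presheaf lies in $\Ind\Cat$ iff its category of elements is filtered, and this property is stable under filtered colimits.
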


\begin{proof}
  \begin{enumerate}
  \item The \emph{only if} part follows immediately from the construction of $\Ind{\Cat}$ in \Cref{rem:indcomp}:~\ref{itm:lem:char-of-indcompltion-1} is obvious, \ref{itm:lem:char-of-indcompltion-3} follows
    from the Yoneda Lemma, and
    \ref{itm:lem:char-of-indcompltion-2} follows from
    \Cref{lem:filtered-canonical-diagram} and the fact that $\Cat$ is dense in
    $[\Cat^\op, \Set]$.
    \item We now prove the \emph{if} part. Suppose that
      \ref{itm:lem:char-of-indcompltion-1}--\ref{itm:lem:char-of-indcompltion-3}
      hold. Let $F\colon \Cat \to \KCat$ be any functor to a category $\KCat$
      with filtered colimits.
      
    \begin{enumerate}
    \item First, define the extension $\overline{F}\colon \L\to \KCat$ of $F$
      as follows.  For any object $L \in \mathscr{L}$ expressed as the
      canonical colimit $(C \xto{c} L)_{c \in \Cat/L}$, the colimit of
      $F\! D^L$ exists since the canonical diagram is filtered
      by condition~\ref{itm:lem:char-of-indcompltion-2} and $\KCat$ has filtered
      colimits.  Thus $\overline{F}$ on objects can be given by a
      choice of a colimit:
      \[
        \overline{F}L \defeq \colim\left(\Cat/L \xto{D^L} \Cat \xto{F} \KCat\right)
      \]
      We choose the colimits such that $\overline{F}L = L$ if $L$ is in $\Cat$.  For any
      morphism $f\colon L \to L'$, each colimit injection
      $\tau_c\colon FC \to \overline{F}L$, for $C \xto{c} L$,
      associates with another colimit injection
      $\tau'_{f\o c}\colon FC \to \overline{F}L'$.  Hence, there is a
      unique morphism
      $\overline{F}f\colon \overline{F}L \to \overline{F}L'$ such that
      $\tau'_{f\o c} = \overline{F}f \o \tau_c$.  By the uniqueness of mediating
      morphisms, $\overline{F}$ preserves identities and composition.
      Therefore, $\overline{F}$ extends $F$.
      
    \item Second, we show that $\overline{F}$ is finitary. Observe
      that $\overline{F}$ is in fact a pointwise left Kan
      extension of $F$ along the embedding $E\colon \Cat\monoto\L$. By~\cite[Cor.~X.5.4]{maclane} we have,
      equivalently, that for every $L \in \mathscr{C}$ and $K \in \KCat$
      the following map from
      $\KCat(\overline FL, K)$ to the set of natural transformations from
      $\mathscr{L}(E-, L)$ to $\KCat(F-, K)$ is a bijection: it assigns
      to a  morphism $f \colon \overline{F}L \to K$ the natural
      transformation whose components are
      \[
        \left(EC \xto{c} L\right)
        \mapsto 
        \left(FC = \overline{F}EC \xto{\overline{F}c} \overline{F}L
          \xto{f} K\right). 
      \]
      Hence, given any colimit cocone $(A_i \to L)_{i \in \mc{I}}$ of
      a filtered diagram, we have the following chain of isomorphisms,
      natural in $K$:
      \begin{align*}
        \KCat(\overline{F}L, K)
        &\cong [\Cat^\op, \Set](\mathscr{L}(E-, L), \KCat(F-, K))
        && \text{ see above }\\
        &\cong [\Cat^\op, \Set](\colim_i \mathscr{L}(E-, A_i), \KCat(F-, K))
        &&\text{ by condition~\ref{itm:lem:char-of-indcompltion-3} } \\
        &\cong \limit_i [\Cat^\op, \Set](\mathscr{L}(E-, A_i), \KCat(F-, K)) \\
        &\cong \limit_i \KCat(\overline{F}A_i, K)
        && \text{ see above } \\
        &\cong \KCat(\colim \overline{F}A_i, K)
      \end{align*}
      Thus, by Yoneda Lemma,
      $\colim\overline{F}A_i = \overline{F}L$, i.e.~$\overline{F}$ is
      finitary.
      
    \item The essential uniqueness of $\overline{F}$ is clear, since
      this functor is given by a colimit construction.\qedhere
    \end{enumerate}
  \end{enumerate}
\end{proof}
By dualizing \Cref{thm:indcomp}, we obtain an analogous characterization of pro-completions:
\begin{cor} \label{lem:char-of-procompletion}
 Let $\Cat$ be a small  category.
 The pro-completion of $\Cat$ is characterized, up to equivalence of categories, as a category $\L$ containing $\Cat$ as a full subcategory such that
  \begin{enumerate}
    \item \label{itm:char-of-procompletion-1} $\L$ has cofiltered limits,
    \item \label{itm:char-of-procompletion-3} every object of $\L$ is a
      cofiltered limit of a diagram in~$\Cat$, and
    \item \label{itm:char-of-procompletion-2} every object of $\Cat$ is finitely copresentable in $\L$.
  \end{enumerate}
\end{cor}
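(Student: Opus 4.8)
The plan is to deduce the corollary from \Cref{thm:indcomp} by passing to opposite categories. Recall from \Cref{not:proc} — and as witnessed concretely in \Cref{ex:pro} — that the pro-completion is obtained from the ind-completion by dualization:
\[
  \Pro{\Cat} \;\simeq\; (\Ind(\Cat^\op))^\op .
\]
More precisely, the universal property defining $\Pro\Cat$ (every functor from $\Cat$ into a category with cofiltered limits extends essentially uniquely to a cofinitary functor out of $\Pro\Cat$) is the image, under the contravariant $2$-functor $(\dash)^\op$, of the universal property of $\Ind(\Cat^\op)$: taking opposite categories turns filtered colimits into cofiltered limits, finitary functors into cofinitary ones, and full-subcategory inclusions into full-subcategory inclusions. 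Hence a category $\L$ containing $\Cat$ as a full subcategory is (equivalent to) the pro-completion of $\Cat$ if and only if $\L^\op$, which contains $\Cat^\op$ as a full subcategory, is the ind-completion of $\Cat^\op$.

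Next I would apply \Cref{thm:indcomp} to the small category $\Cat^\op$: the category $\L^\op$ is the ind-completion of $\Cat^\op$ precisely when (1) $\L^\op$ has filtered colimits, (2) every object of $\L^\op$ is a filtered colimit of a diagram in $\Cat^\op$, and (3) every object of $\Cat^\op$ is finitely presentable in $\L^\op$. Dualizing each clause turns these into exactly conditions~\ref{itm:char-of-procompletion-1}--\ref{itm:char-of-procompletion-2} of the statement: filtered colimits in $\L^\op$ are cofiltered limits in $\L$; a diagram in $\Cat^\op$ with colimit $L^\op$ in $\L^\op$ is the same datum as a diagram in $\Cat$ with limit $L$ in $\L$; and, by the very definition of finite copresentability recalled in \Cref{rem:cofiltered}, an object $C$ of $\Cat$ is finitely copresentable in $\L$ iff the same object, viewed in $\Cat^\op$, is finitely presentable in $\L^\op$. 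This establishes the corollary.

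The argument is entirely formal, so there is no serious obstacle; the only step deserving explicit mention rather than being taken for granted is the first paragraph's claim that $(\dash)^\op$ interchanges the two free-completion universal properties of \Cref{not:proc}, so that ``opposite of the ind-completion of $\Cat^\op$'' genuinely is ``the pro-completion of $\Cat$''. Everything after that is a clause-by-clause transcription of \Cref{thm:indcomp}.
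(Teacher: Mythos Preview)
Your proposal is correct and follows exactly the same approach as the paper: the corollary is obtained by dualizing \Cref{thm:indcomp}, and you have simply spelled out in detail the passage to opposite categories that the paper leaves implicit.
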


\begin{remark}\label{rem:profinitecompletion}
Let $\Cat$ be a small category.
\begin{enumerate}
  \item $\Pro{\Cat}$ is unique up to equivalence.
  \item $\Pro\Cat$ can be constructed as the full subcategory of $[\Cat, \Set]^{\op}$ given by
    all cofiltered limits of representable functors.  The category
    $\Cat$ has a full embedding into~$\Pro\Cat$ via the Yoneda
    embedding~$E\colon C\mapsto \Cat(C,\dash)$. This follows from the description of Ind-completions in \Cref{rem:indcomp} and the fact that
    \[ 
      \Pro{\Cat} = (\Ind{\Cat^{\op}})^{\op}.
    \]
  \item If the category $\Cat$ is finitely complete, then $\Pro{\Cat}$ can also
    be described as the dual of the category of all functors in
    $\func{\Cat}{\Set}$ preserving finite limits.  Again, $E$ is given
    by the Yoneda embedding.  This is dual
    to~\cite[Thm.~1.46]{Adamek1994}.  Moreover, it follows that
    $\Pro\Cat$ is complete and cocomplete.

  \item Given a small category $\KCat$ with cofiltered limits, denote
    by $[\Pro \Cat,\KCat]_{\mathrm{cfin}}$ the full subcategory of
    $[\Pro \Cat,\KCat]$ given by cofinitary functors. Then the
    pre-composition by $E$ defines an equivalence of categories
    \[
      (\dash)\o E\colon [\Pro\Cat,\KCat]_{\mathrm{cfin}} \xto{~\simeq~} [\Cat,\KCat],
    \]
    where the inverse is given by right Kan extension along $E$.
\end{enumerate}
\end{remark}

\end{document}